\newtheorem{theorem}{Theorem}[section]
\newtheorem{proposition}[theorem]{Proposition}
\newtheorem{corollary}[theorem]{Corollary}
\newtheorem{lemma}[theorem]{Lemma}
\newtheorem{example}[theorem]{Example}
\newtheorem{remark}[theorem]{Remark}
\newtheorem{definition}[theorem]{Definition}
\newtheorem{fact}[theorem]{Fact}
\newcommand{\G}{\mathbb{G}}
\newcommand{\T}{\mathbb{T}}
\newcommand{\Z}{\mathbb{Z}}
\newcommand{\iZ}{\mathcal{Z}}
\newcommand{\N}{\mathbb{N}}
\newcommand{\supp}{\mathrm{supp}}
\def\T{{\mathbb T}}
\def\eps{{\varepsilon}}
\def\Z{{\mathbb Z}}
\def\N{{\mathbb N}}
\def\R{{\mathbb R}}
\def\Q{{\mathbb Q}}
\begin{document}

\title[Generating subgroups of the circle]
{Generating subgroups of the circle using a generalized class of density functions}
\subjclass[2010]{Primary:  22B05, 03E05 Secondary: 11B05} \keywords{ $f$-density of weight $g$, ideal $\iZ_g(f)$, circle group, characterized subgroup, $f^g$-statistical convergence, arithmetic sequence}

\author{Pratulananda Das}

\address{Department of Mathematics, Jadavpur University, Kolkata-700032, India}
\email {pratulananda@yahoo.co.in}

\author{Ayan Ghosh}

\address{Department of Mathematics, Jadavpur University, Kolkata-700032, India}
\email {ayanghosh.jumath@gmail.com}

\begin{abstract}

In this article, we  consider the generalized version $d^f_g$ of the natural density function introduced in \cite{BDK} where $g : \N \rightarrow [0,\infty)$
satisfies $g(n) \rightarrow \infty$ and $\frac{n}{g(n)} \nrightarrow 0$ whereas $f$ is an unbounded modulus function and generate versions of characterized subgroups of the circle group $\T$ using these density functions. We show that these subgroups have the same feature as the $s$-characterized subgroups \cite{DDB} or $\alpha$-characterized subgroups \cite{BDH} and our results provide more general versions of the main results of both the articles. But at the same time the utility of this more general approach is justified by constructing new and nontrivial subgroups for suitable choice of $f$ and $g$. In several of our results we use properties of the ideal $\iZ_g(f)$ which are first presented along with certain new observations about these ideals which were not there in \cite{BDK}.

\end{abstract}
\maketitle
%
%

\section{Introduction and background}

Throughout this paper $\R$, $\Q$, $\Z$ and $\N$ will stand for the set of all real numbers, the
set of all rational numbers, the set of all integers and the set of all natural
numbers respectively. The first three are equipped with their usual abelian
group structure, while we denote by $\T$ the circle group $\R/\Z$ in additive notation. For a positive natural number $m$ we denote by $\Z(m)$ the set of solutions of the equation $mx=0$ in $\T$. This is a cyclic group of order $m$. For $x\in\R$ we denote by $\lfloor x\rfloor$ the greatest integer less than $x$, and by $\{x\}$ we denote its fractional part.

Let us start with the important observation as, how historically, nice subgroups of the circle group have been generated via the notion of usual convergence using sequences of integers which later came to be known as characterized subgroups.
  \begin{definition}\cite{BDS}
Let $(a_n)$ be a sequence of integers, the subgroup
$$
t_{(a_n)}(\T) := \{x\in \T: \{a_nx\} \to 0\mbox{ in } \T\}.
$$
of $\T$ is called a characterized $($by $(a_n))$ subgroup of $\T$.
\end{definition}
  Characterized  subgroups of $\T$ have been studied widely by many authors, and a major part of it has been devoted knowing these subgroups for arithmetic sequences.   Eggleston \cite{E} observed  (see also \cite{BDMW}) that the asymptotic behavior of the
sequence $q_n := {a_n\over a_{n-1}}$
of ratios has a strong impact on the size of $t_{(a_n)}(\T)$:

(E1) $t_{(a_n)}(\T)$ is countable if $(q_n)$ is bounded;

(E2) $|t_{(a_n)}(\T)| = c$ if $q_n \to \infty$.

B\'{i}r\'{o}, Deshouillers and S\'{o}s \cite{BDS} established the important fact that every countable subgroup of $\T$ is characterized. The whole history concerning these investigations along with relevant references can be found in the surveys [\cite{D1}, \cite{DDG}] as also the recent article \cite{DDB}.

 In many cases  the subgroup  $t_{(a_n)}(\T)$
 is rather small, even if the sequence $(a_n)$ is not too dense.
This suggests that asking $a_nx \to 0 $ maybe somewhat too restrictive (as has been pointed out in more details in \cite{DDB}). A very natural instinct should be to consider modes of convergence which are more general than the notion of usual convergence and here the idea of natural density came into picture, as motivated by the above mentioned observation, Dikranjan, Das and Bose \cite{DDB} introduced the notion of  statistically characterized  subgroups of $\T$ by
relaxing the condition $a_nx \to 0 $ with the condition $a_nx \to 0 $ statistically.

For $m,n\in\mathbb{N}$ and $m\leq n$, let $[m, n]$ denotes the set $\{m, m+1, m+2,...,n\}$. By
 $|A|$ we denote the cardinality of a set $A$.
\begin{definition}(see \cite{Bu1},\cite{Bu2})
The lower and the upper natural densities of $A \subset \mathbb{N}$ are defined by
\begin{equation*}
\underline{d}(A) = \displaystyle{\liminf_{n\to\infty}}\frac{|A\cap [1,n]|}{n}
\end{equation*}
 and
\begin{equation*}
\hspace{.2cm} \overline{d}(A) = \displaystyle{\limsup_{n\to\infty}}\frac{|A\cap [1,n]|}{n}.
\end{equation*}
If $\underline{d}(A)=\overline{d}(A)$, we say that the natural
density of $A$ exists and it is denoted by $d(A)$.
\end{definition}
\begin{definition}\cite{St,F,S,Fr}
A sequence of real numbers $(x_n)$ is said to converge to a real number $x_0$ statistically if for any $\eps > 0$, $d(\{n \in \mathbb{N}: |x_n - x_0| \geq \eps\}) = 0$.
\end{definition}

 \begin{definition}\cite{DDB}
For a sequence of integers $(a_n)$ the subgroup
$$
t^s_{(a_n)}(\T) := \{x\in \T: \{a_nx\} \to 0\  \mbox{ statistically in }\  \T\}
$$
of $\T$ is called a statistically characterized (shortly, an s-characterized) $($by $(a_n))$ subgroup of $\T$.
\end{definition}

This was followed by another attempt to generate nice subgroups of $\T$ using certain kind of density function when in \cite{BDH} the notion of natural density of order $\alpha$, $0 < \alpha < 1$, (introduced in 2012 in \cite{BDP}) was used to generate corresponding characterized subgroups $t^\alpha_{(a_n)}(\T)$ and it was seen that they indeed generate, for a given arithmetic sequence $(a_n)$, new nontrivial subgroups different from $t_{(a_n)}(\T)$ as well as $t^s_{(a_n)}(\T)$.

Lately there have been certain other as also more general versions of density functions which we now recall. The modulus functions are defined as functions $f:[0,\infty)\to [0,\infty)$ which satisfy the following properties.
\begin{itemize}
\item[(i) ] $f(x)=0 \Leftrightarrow x=0$
\item[(ii) ] $f(x+y)\leq f(x)+f(y)$ for all $x,y\in (0,\infty)$         [Triangle inequality]
\item[(iii) ] $f$ is non-decreasing
\item[(iv)] $f$ is right continuous at $0$.
\end{itemize}
Note that the conditions (i)-(iv) imply the
continuity of the modulus functions which will be useful in certain  proofs. Some examples of such modulus functions \cite{AIZ} are given by
\begin{itemize}
\item[1. ] $f(x)=x, \  x\in [0,\infty)$.
\item[2. ] $f(x)=\frac{x}{1+x}, \  x\in [0,\infty)$.
\item[3. ] For any $\alpha\in (0,1)$, $f(x)=x^{\alpha}$ for $x\in [0,\infty)$.
\item[4. ] $f(x)=\log(1+x), \ x\in [0,\infty)$.
\end{itemize}
 In 2014, a notion of density function was introduced using modulus functions.  Precisely, the upper $f$ density function \cite{AIZ} was defined in the following way:
$$\overline{d}^f(A)=\limsup_{n\to\infty} \frac{f(|A\cap
[0,n-1]|)}{f(n)}.$$
Similarly the lower $f$ density function $\underline{d}^f$ is defined. As a natural consequence, the family
$$\iZ(f)=\{A\subset \N: \overline{d}^f(A)=0\}$$
forms an ideal on $\N$. Recall that a family $\mathcal{I}\subseteq \mathcal{P}(\N)$ is called an ideal on $\N$ whenever
\begin{itemize}
\item[$\bullet$] $\N\not\in \mathcal{I}$,
\item[$\bullet$] if $A,B\in\mathcal{I}$ then $A\cup B\in \mathcal{I}$,
\item[$\bullet$] if $A\subseteq B$ and $B\in\mathcal{I}$ then $A\in\mathcal{I}$.
\end{itemize}
An ideal $\mathcal{I}$ on $\N$ is tall or dense if each infinite set $A\subseteq\N$ has an infinite subset $B\in\mathcal{I}$.

On the other hand, in 2015 in \cite{BDFS} the authors defined a new class of densities using weight functions. Let $ g: \N \to \left[
{0,\infty } \right)$ be a function with $ \mathop {\lim
}\limits_{n \to \infty } g\left( n \right) = \infty .$ The upper
density of weight $g$ was defined in \cite{BDFS} by the formula
$$\overline{d}_{g}(A)= \mathop {\limsup }\limits_{n \to \infty } \frac{|A
\cap [1, n]|}{{g\left( n \right)}}$$ for $A \subset
\N$. The lower density of weight $g$, $\underline{d}_g(A)$ is defined in a similar way. Then the family
$$\iZ_{g}=\{A\subset \N: \overline{d}_{g}(A)=0\}$$
also forms an ideal. It has been observed in \cite{BDFS} that $\N
\in \iZ_{g}$ iff $\frac{n}{{g\left( n \right)}} \to 0.$
So we additionally assume that $ {n \mathord{\left/
 {\vphantom {n {g\left( n \right)}}} \right.
 \kern-\nulldelimiterspace} {g\left( n \right)}}\nrightarrow 0$ and define the set of all such weight functions $g$ by $\G$.

The approaches of \cite{BDP}, \cite{AIZ} and \cite{BDFS} were unified in \cite{BDK}. Let us now recall the definition of the density function $d^f_g$, henceforth called "muduler simple density function" (where $f$ is an unbounded modulus function) introduced in \cite{BDK} which are our prime objects in this article:
$$
\underline{d}^{f}_{g}(A)=\displaystyle{\liminf_{n\to\infty}}\frac{f(|A\cap [1,n]|)}{f(g(n))} ~~\mbox{and}~~
\overline{d}^{f}_{g}(A)=\displaystyle{\limsup_{n\to\infty}}\frac{f(|A\cap [1,n]|)}{f(g(n))}.
$$
If $\underline{d}^{f}_{g}(A)=\overline{d}^{f}_{g}(A)$, we say that $d^{f}_{g}(A)$ exists.

Following the nomenclature of \cite{BDK},
$
\iZ_{g}(f) = \{A \subset \mathbb{N}: d^{f}_{g}(A) = 0\}
$
 denotes the corresponding ideal (henceforth called moduler simple density ideal) and $\iZ_g^*(f)$ is the dual filter i.e. $\iZ_g^*(f) = \{A \subset \mathbb{N}: d^{f}_g(\N \setminus A) =0\}$. Further in view of our next result we can assume  $\G$ to consist of only non-decreasing weight functions $ g: \N \to [0,\infty)$ such that $\lim\limits_{n \to \infty } g(n) = \infty $ and $\frac{n}{g(n)}\nrightarrow 0$ without any loss of generality.
\begin{lemma} \label{gincreasing}\cite{BDK}
Let $f$ be an unbounded modulus function as specified. For each function $g\in \G$ there exists a non-decreasing function
$g'\in \G$ such that $\iZ_{g'}(f) = \iZ_{g}(f)$. Moreover, $f(g'(n))\leq f(g(n))$
for all $n\in\N$.
\end{lemma}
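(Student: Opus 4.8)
The plan is to take for $g'$ the pointwise infimum of the tail values of $g$, that is
\[
g'(n):=\inf_{k\ge n} g(k),\qquad n\in\N .
\]
The first thing I would record is that this infimum is actually attained: since $g(k)\to\infty$, for each fixed $n$ only finitely many $k\ge n$ satisfy $g(k)\le g(n)$, so the infimum over $k\ge n$ equals the minimum of $g$ over that finite (and nonempty) set, and there is some $k^\ast=k^\ast(n)\ge n$ with $g'(n)=g(k^\ast)$. Directly from the definition, $g'$ is non-decreasing (the family $\{g(k):k\ge n\}$ shrinks as $n$ grows), $g'(n)\le g(n)$, and $g'(n)\to\infty$ because $g(n)\to\infty$. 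Monotonicity of $f$ then yields $f(g'(n))\le f(g(n))$ for every $n$, which is precisely the ``moreover'' clause. To see $g'\in\G$ I only need $n/g'(n)\nrightarrow 0$: from $g'(n)\le g(n)$ we get $n/g'(n)\ge n/g(n)$, and since $n/g(n)\nrightarrow 0$ there is $\delta>0$ with $n/g(n)\ge\delta$ for infinitely many $n$, whence $n/g'(n)\ge\delta$ for the same $n$.

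For the inclusion $\iZ_{g'}(f)\subseteq\iZ_g(f)$ I would simply exploit $f(g'(n))\le f(g(n))$. For large $n$ both denominators are positive, so
\[
\frac{f(|A\cap[1,n]|)}{f(g(n))}\le\frac{f(|A\cap[1,n]|)}{f(g'(n))},
\]
and therefore if the right-hand ratio tends to $0$ (i.e.\ $A\in\iZ_{g'}(f)$) so does the left-hand one, giving $A\in\iZ_g(f)$.

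The substance of the lemma is the reverse inclusion $\iZ_g(f)\subseteq\iZ_{g'}(f)$, and this is where the specific choice of $g'$ does the work. Fix $A\in\iZ_g(f)$ and $\eps>0$, and choose $N$ with $f(|A\cap[1,k]|)<\eps\,f(g(k))$ for all $k\ge N$. Given $n\ge N$, take $k^\ast\ge n$ with $g'(n)=g(k^\ast)$ as above. Since the counting function $m\mapsto|A\cap[1,m]|$ and $f$ are both non-decreasing and $k^\ast\ge n$,
\[
f(|A\cap[1,n]|)\le f(|A\cap[1,k^\ast]|)<\eps\,f(g(k^\ast))=\eps\,f(g'(n)),
\]
so $f(|A\cap[1,n]|)/f(g'(n))<\eps$ for every $n\ge N$, and hence $A\in\iZ_{g'}(f)$. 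The point I would flag as crucial is that $g'(n)$ is realised by a genuine future value $g(k^\ast)$ with $k^\ast\ge n$: this is exactly what lets the monotone counting function ``catch up'' and absorb the shrinking of the denominator. Any cruder non-decreasing minorant, for which $f(g'(n))$ could be far smaller than every nearby $f(g(k))$, would break this estimate, so the attainment of the infimum is the step to handle with care; everything else is routine.
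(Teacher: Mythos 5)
Your proof is correct, and it is essentially the standard argument for this lemma (which the paper states without proof, citing \cite{BDK}): replace $g$ by its running tail minimum $g'(n)=\min_{k\ge n}g(k)$, well defined since $g(n)\to\infty$, and use the attainment of the minimum at some $k^{\ast}\ge n$ together with the monotonicity of $f$ and of the counting function to transfer the convergence between the two denominators. Your flagging of the attainment of the infimum as the load-bearing step is exactly right, and no gap remains.
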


This article is broadly divided into two parts. In the first part, namely Section 2, we primarily continue the investigation of the ideal $\iZ_{g}(f)$ from \cite{BDK} and present several new observations all of which have interesting applications in the next two sections. In Section 3 we take a more unified approach to the recent study of characterized groups by  considering the notion of  density function  $d^f_g$. As mentioned already, all the notions of density functions, precisely, natural density $d$ \cite{Bu1}, natural density $d_\alpha$ of order $\alpha$ \cite{BDP}, their generalization with respect to weight function $g$, $d_g$ \cite{BDFS} and natural density with respect to an unbounded modulus function $f$, $f$-density $d^f$ \cite{AIZ} are special cases of ``$f$ density of weight $g$" i.e. $d^f_g$ \cite{BDK} and the consequence is that, not only the main results of \cite{DDB} and \cite{BDH} follow as special cases of our results, namely, Theorem 3.4 (extends Theorem A \cite{DDB}), Theorem 3.12 (extending Theorem B \cite{DDB}) and Theorem 3.13 (extending Theorem C \cite{DDB}), at the same time, the questions about $f$-density are resolved. Finally the justification for the investigation is assured by  Theorem 4.1 and Theorem 4.2 (proved in Section 4) which shows that for a given arithmetic sequence, we can indeed construct non-trivial Borel subgroups of $\T$ different from $t^s_{(a_n)}(\T)$ or $t^\alpha_{(a_n)}(\T)$ for suitable choice of modulus function $f$ or the weight function $g$. The article ends with some interesting comparative results about the generated subgroups which are also presented in this section.

\section{Certain further observations concerning the ideal $\iZ_{g}(f)$}

We start with the observation that for any unbounded modulus function $f$ and $g\in\G$, the generated ideal $\iZ_{g}(f)$ contains at least one infinite subset of $\N$ as from the result mentioned below one can easily verify that $\iZ_g(f) \supsetneq Fin$ where as usual $Fin$ stands for the ideal of all finite sets.

\begin{lemma}\label{fgnk}\cite[Lemma 3.6]{BDK}
Let $f$ be an unbounded modulus function and let $g\in \G$. Further let us define a sequence $(n_k)$ inductively such that $n_0:=1$ and $n_{k+1}:=\min\{n\in\N : f(g(n))\geq 2f(g(n_k))\}$. Then
$$
\iZ_{g}(f) = \{A\subset\N : \lim\limits_{k\rightarrow\infty}\frac{f(|A\cap[n_k,n_{k+1})|)}{f(g(n_k))}=0\}.
$$
\end{lemma}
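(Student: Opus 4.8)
The plan is to prove the two inclusions separately, after recording a few elementary features of the sequence $(n_k)$. By Lemma \ref{gincreasing} I may assume $g$ is non-decreasing, so that $n\mapsto f(g(n))$ is non-decreasing and, since $g(n)\to\infty$ with $f$ unbounded, tends to $\infty$; consequently $(n_k)$ is well defined, eventually strictly increasing, and $n_k\to\infty$. Two estimates drive everything. First, by the minimality in the definition of $n_{k+1}$, every $n\in[n_k,n_{k+1})$ satisfies $f(g(n_k))\le f(g(n))<2f(g(n_k))$. Second, chaining the defining inequality $f(g(n_{i+1}))\ge 2f(g(n_i))$ from $i=j$ up to $i=k-1$ yields the geometric decay $f(g(n_j))\le 2^{-(k-j)}f(g(n_k))$ for all $j\le k$. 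I also record that, because all the quotients involved are nonnegative, $A\in\iZ_g(f)$ is equivalent to $\frac{f(|A\cap[1,n]|)}{f(g(n))}\to 0$ as $n\to\infty$, not merely $\limsup=0$.

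For the inclusion $\iZ_g(f)\subseteq\{\dots\}$, suppose $\frac{f(|A\cap[1,n]|)}{f(g(n))}\to 0$. Since $|A\cap[n_k,n_{k+1})|\le|A\cap[1,n_{k+1}-1]|$, monotonicity of $f$ bounds the numerator of the block quotient by $f(|A\cap[1,n_{k+1}-1]|)$; and since $n_{k+1}-1\in[n_k,n_{k+1})$, the first estimate gives $f(g(n_k))>\tfrac12 f(g(n_{k+1}-1))$. Hence the block quotient is at most $2\,\frac{f(|A\cap[1,n_{k+1}-1]|)}{f(g(n_{k+1}-1))}$, which tends to $0$ as $k\to\infty$ because $n_{k+1}-1\to\infty$.

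For the reverse inclusion, suppose the block quotients $a_j:=\frac{f(|A\cap[n_j,n_{j+1})|)}{f(g(n_j))}$ tend to $0$, and fix $n$ with $n\in[n_k,n_{k+1})$. Because $n_0=1$, the blocks partition $[1,n_{k+1})$, so $[1,n]\subseteq\bigcup_{j=0}^{k}[n_j,n_{j+1})$. Here the modulus hypotheses are used essentially: the triangle inequality (subadditivity) of $f$ gives $f(|A\cap[1,n]|)\le\sum_{j=0}^{k}f(|A\cap[n_j,n_{j+1})|)=\sum_{j=0}^{k}a_j f(g(n_j))$. Dividing by $f(g(n))\ge f(g(n_k))$ and inserting the geometric decay $f(g(n_j))/f(g(n_k))\le 2^{-(k-j)}$ produces $\frac{f(|A\cap[1,n]|)}{f(g(n))}\le\sum_{j=0}^{k}2^{-(k-j)}a_j$.

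It remains to see that this weighted sum tends to $0$ as $k\to\infty$ (equivalently as $n\to\infty$), and this is the main obstacle, the one genuinely non-formal step. It is a Toeplitz-type regular-summability fact: the weights $w_{k,j}=2^{-(k-j)}$ have row sums bounded by $\sum_{i\ge0}2^{-i}=2$ and satisfy $w_{k,j}\to0$ for each fixed $j$, so applying them to a null sequence $(a_j)$ yields a null sequence. Concretely one splits the sum at an index $J$ beyond which $a_j<\eps$: the head $\sum_{j\le J}2^{-(k-j)}a_j$ is a fixed quantity times $2^{-k}\to0$, while the tail is bounded by $\eps\sum_{i\ge0}2^{-i}=2\eps$. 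Letting $k\to\infty$ and then $\eps\to0$ shows $\overline{d}^f_g(A)=0$, that is $A\in\iZ_g(f)$, completing the proof.
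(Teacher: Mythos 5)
Your proof is correct, but note that there is no in-paper proof to compare it with: the paper states this lemma as a quotation of \cite[Lemma 3.6]{BDK} and gives no argument, so your proposal in effect supplies the omitted proof. The route you take is the natural (and almost certainly the intended) one: the two inclusions, driven by the block estimate $f(g(n_k))\le f(g(n))<2f(g(n_k))$ for $n\in[n_k,n_{k+1})$, the geometric decay $f(g(n_j))\le 2^{-(k-j)}f(g(n_k))$, subadditivity of $f$ to split $f(|A\cap[1,n]|)$ over the blocks, and the regular-summability step $\sum_{j\le k}2^{-(k-j)}a_j\to0$ for null $(a_j)$, all check out, and the forward inclusion via $n_{k+1}-1\in[n_k,n_{k+1})$ is clean. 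Two small points deserve care. First, invoking Lemma \ref{gincreasing} is not a genuine ``without loss of generality'' for this particular statement: replacing $g$ by the non-decreasing $g'$ preserves the left-hand side $\iZ_g(f)=\iZ_{g'}(f)$ but changes the sequence $(n_k)$, hence the right-hand side of the asserted identity; the correct justification is the paper's standing convention, announced just before Lemma \ref{gincreasing}, that $\G$ is taken to consist of non-decreasing weight functions in the first place. Second, your claims that $(n_k)$ is strictly increasing and that the displayed quotients make sense tacitly require $f(g(n_0))>0$ (if $g(1)=0$ the recursion degenerates and the denominators vanish); this is implicit in the statement itself, but worth a sentence. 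Neither point affects the substance of the argument.
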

For any unbounded modulus function $f$ and $g\in\G$ it is obvious that $\iZ_g(f)\supseteq Fin$. Set $A=\{m_k\in\N : m_k\in [n_k,n_{k+1})\}$ and note that
$$
d^f_g(A) = \lim\limits_{k\rightarrow\infty}\frac{f(|A\cap[n_k,n_{k+1})|)}{f(g(n_k))} = \lim\limits_{k\rightarrow\infty}\frac{f(1)}{f(g(n_k))} = 0.
$$
Therefore there always exists an infinite $A\subset\N$ such that $A\in\iZ_g(f)\setminus Fin$.

In general ideals generated by some kind of density function are called density ideals. However here we will use the following definition in the sense of Farah (\cite{Fa}, Definition 1.13.1, p 42). For a positive measure $\mu$ defined on subsets of $\N$, the support of $\mu$ is the set
$\{n \in \N : \mu(\{n\})> 0\}$. We say that an ideal $\mathcal{I}$ on $\N$ is a density ideal if $\mathcal{I} = Exh(\varphi) = \{ A \subset \N\colon \lim_{n\to\infty}\varphi(A
\setminus [1,n])=0\}$ where $\varphi := \sup_{i \in \N} \mu_i$ and $\mu_i$ are positive measures with pairwise disjoint supports being finite subsets of $\N$. We then say that $\mathcal{I}$ is a density ideal generated by the sequence $(\mu_i)_{i \in \N}$ (note that the set $Exh(\varphi)$ may not be an ideal in general). Further recall that an ideal $\mathcal{I}$ is called dense if for every infinite $A \subset \N$ there is an infinite $B \in \mathcal{I}$ such that $B \subset A$.
\begin{proposition}\label{tall}
For any unbounded modulus function $f$ and for any $g\in\G$, the ideal $\iZ_g(f)$ is tall or dense.
\end{proposition}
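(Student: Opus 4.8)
The plan is to exploit the block description of $\iZ_g(f)$ furnished by Lemma \ref{fgnk}, which reduces membership in the ideal to a condition on the consecutive intervals $[n_k,n_{k+1})$ determined there by $n_0:=1$ and $n_{k+1}:=\min\{n\in\N : f(g(n))\geq 2f(g(n_k))\}$. The preliminary observation I would record is that $f(g(n))\to\infty$: since $f$ is non-decreasing and unbounded we have $f(x)\to\infty$ as $x\to\infty$, and $g(n)\to\infty$ by $g\in\G$, so $f(g(n))\to\infty$. In particular the defining inequality $f(g(n_{k+1}))\geq 2f(g(n_k))$ forces, for all large $k$ (once $f(g(n_k))>0$), that $f(g(n_{k+1}))>f(g(n_k))$, hence $n_{k+1}>n_k$ by monotonicity of $f\circ g$; thus $(n_k)$ is eventually strictly increasing and $n_k\to\infty$. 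Consequently the blocks $[n_k,n_{k+1})$ are finite, pairwise disjoint, and cover every sufficiently large integer.

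Given an arbitrary infinite set $A\subseteq\N$, I would then thin it out to meet each block in at most one point. For every $k$ with $A\cap[n_k,n_{k+1})\neq\emptyset$ choose a single element $b_k\in A\cap[n_k,n_{k+1})$, and set $B=\{b_k : A\cap[n_k,n_{k+1})\neq\emptyset\}$. Since $A$ is infinite while each block is finite and the blocks exhaust $\N$, infinitely many blocks meet $A$, so $B$ is an infinite subset of $A$ with $|B\cap[n_k,n_{k+1})|\leq 1$ for every $k$.

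It then remains to verify $B\in\iZ_g(f)$ through the criterion of Lemma \ref{fgnk}. By construction $f(|B\cap[n_k,n_{k+1})|)\leq f(1)$, whence
$$
\frac{f(|B\cap[n_k,n_{k+1})|)}{f(g(n_k))}\;\leq\;\frac{f(1)}{f(g(n_k))}\;\longrightarrow\;0
\quad(k\to\infty),
$$
because $f(g(n_k))\to\infty$. Therefore $\lim_{k\to\infty}\frac{f(|B\cap[n_k,n_{k+1})|)}{f(g(n_k))}=0$, and Lemma \ref{fgnk} yields $B\in\iZ_g(f)$, establishing tallness. This argument simply internalizes, inside a prescribed $A$, the computation displayed right after Lemma \ref{fgnk} showing that any set hitting each block at most once lies in $\iZ_g(f)$. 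I anticipate no serious obstacle; the only step demanding care is the verification that $n_k\to\infty$ so the blocks genuinely cover a tail of $\N$, and this is precisely where the hypotheses $g(n)\to\infty$ and the unboundedness of $f$ are used.
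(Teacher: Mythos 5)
Your proof is correct, and it reaches the conclusion by a genuinely different route from the paper's. The paper does not argue inside an arbitrary infinite set at all: it invokes \cite[Theorem 3.7]{BDK}, which exhibits $\iZ_g(f)$ as a density ideal in the sense of Farah generated by the measures $\mu_k(A)=\frac{f(|A\cap[n_k,n_{k+1})|)}{f(g(n_k))}$ supported on the very blocks of Lemma~\ref{fgnk}, and then applies the criterion of \cite[Proposition 3.4]{BDFS}, by which such a density ideal is tall precisely when $\lim_{k\to\infty}\sup_{i\in\N}\mu_k(\{i\})=0$; that supremum is exactly $\frac{f(1)}{f(g(n_k))}$, which tends to $0$. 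You instead prove tallness from the definition: given an infinite $A$, you select at most one point of $A$ per block to obtain an infinite $B\subseteq A$ with $|B\cap[n_k,n_{k+1})|\leq 1$, and then Lemma~\ref{fgnk} gives $B\in\iZ_g(f)$ since $\frac{f(|B\cap[n_k,n_{k+1})|)}{f(g(n_k))}\leq\frac{f(1)}{f(g(n_k))}\to 0$ --- in effect internalizing, inside the prescribed $A$, the computation the paper carries out right after Lemma~\ref{fgnk} to show $\iZ_g(f)\supsetneq Fin$. Note that both arguments ultimately rest on the same quantitative fact, $f(g(n_k))\to\infty$. What your approach buys is self-containedness: it needs only the block characterization of Lemma~\ref{fgnk} and no external structural results, and your explicit verification that $(n_k)$ is (eventually) strictly increasing so the blocks exhaust a tail of $\N$ makes precise a point both the lemma and the paper's proof leave implicit. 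What the paper's approach buys is brevity and extra structural information: identifying $\iZ_g(f)$ as a density ideal generated by measures with uniformly vanishing atoms yields tallness as an instance of a general characterization, and that structural fact is reusable elsewhere, whereas your selector argument proves only the statement at hand.
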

\begin{proof}
Let $f$ be an unbounded modulus function and $g\in\G$. Then, from \cite[Theorem 3.7]{BDK} the density ideal $\iZ_{g}(f)$ is generated by the sequence of measures $(\mu_{k})$ given by
$$
\mu_{k}(A)=\frac{f(|A\cap [n_{k},n_{k+1})|)}{f(g(n_{k}))}.
$$
Therefore
$$
\lim\limits_{k\to\infty}\sup\limits_{i\in\N} \mu_k(\{i\}) = \ \lim\limits_{k\to\infty}\sup\limits_{i\in\N}\frac{f(|\{i\}\cap [n_{k},n_{k+1})|)}{f(g(n_{k}))} = \ \lim\limits_{k\to\infty}\frac{f(1)}{f(g(n_{k}))} = 0.
$$
Thus, from \cite[Proposition 3.4]{BDFS}, it follows that the density ideal $\iZ_g(f)$ is tall.
\end{proof}

The following results will be needed for our next observations.

\begin{lemma}\label{result3}\cite[Lemma 3.3]{BDK}
Let $f$ be an unbounded modulus function and let $g\in \G$ be such that $\frac{f(n)}{f(g(n))}\to\infty$. Then there exists a set $A \subset\N$ such that the sequence $(\frac{f(|A \cap [1,n]|)}{f(g(n))})$ is bounded but not convergent to $0$.
\end{lemma}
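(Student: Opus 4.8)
The plan is to build $A$ by a greedy, one-integer-at-a-time procedure that keeps the ratio controlled from above while preventing it from collapsing to $0$. Writing $a(n) := |A\cap[1,n]|$ for the counting function, I would decide membership of each $n$ while maintaining the invariant $\frac{f(a(n))}{f(g(n))}\leq 1$: starting from $a(0)=0$, put $n\in A$ precisely when doing so keeps the ratio at most $1$, i.e. when $f(a(n-1)+1)\leq f(g(n))$, and otherwise leave $n$ out. Since this rule only produces jumps of $0$ or $1$, the resulting $a(n)$ is a genuine counting function of a set $A\subseteq\N$, and by construction the sequence $\left(\frac{f(a(n))}{f(g(n))}\right)$ is bounded by $1$. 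Verifying the invariant in the ``skip'' case uses that $g$ (hence $f\circ g$) may be taken non-decreasing by Lemma \ref{gincreasing}, so that $f(a(n)) = f(a(n-1)) \leq f(g(n-1)) \leq f(g(n))$.

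It then remains to show the sequence does not converge to $0$, and this is where the hypothesis $\frac{f(n)}{f(g(n))}\to\infty$ does the real work. First I would record that $A$ is infinite, hence $a(n)\to\infty$: if $a(n)$ were eventually constant equal to $N$, then since $f(g(n))\to\infty$ we would have $f(g(n))\geq f(N+1)$ for large $n$, forcing $n$ into $A$, a contradiction. Next I would argue that the construction skips infinitely often. If from some point on every $n$ were admitted, then $a(n)=n-c$ for a constant $c\geq 0$, and the admission condition would read $\frac{f(n-c)}{f(g(n))}\leq 1$; but subadditivity gives $f(n-c)\geq f(n)-f(c)$, so $\frac{f(n-c)}{f(g(n))}\geq \frac{f(n)}{f(g(n))}-\frac{f(c)}{f(g(n))}\to\infty$, contradicting the bound. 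Thus the set of skip times is infinite.

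Finally, along any skip time $n$ one has $f(g(n))<f(a(n-1)+1)$ by the skip condition, while $a(n)=a(n-1)$, so
$$\frac{f(a(n))}{f(g(n))}=\frac{f(a(n-1))}{f(g(n))}>\frac{f(a(n-1))}{f(a(n-1)+1)}\geq\frac{f(a(n-1))}{f(a(n-1))+f(1)},$$
using subadditivity in the last step. Since $a(n-1)\to\infty$ and $f$ is unbounded and non-decreasing, the right-hand side tends to $1$; hence the subsequence of the ratio indexed by the skip times converges to $1$, so the full sequence cannot converge to $0$. This completes the argument.

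I expect the main obstacle to be the non-convergence to $0$ rather than the boundedness. One must simultaneously ensure that the greedy rule admits \emph{enough} elements to keep the ratio away from $0$ (which is exactly what the growth hypothesis on $f(n)/f(g(n))$ guarantees, through the ``infinitely many skips'' step) and that at those decisive skip times the increment $f(a(n-1)+1)-f(a(n-1))$ is negligible relative to $f(a(n-1))$, which is the content of the subadditive estimate $f(k)/f(k+1)\to 1$. Some care is also needed to keep $a(n)$ a legitimate counting function, with jumps in $\{0,1\}$, but the one-integer-at-a-time formulation handles this automatically.
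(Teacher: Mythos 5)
Your argument is correct, and it is worth noting at the outset that the paper itself contains no proof of this lemma --- it is imported verbatim from \cite[Lemma 3.3]{BDK} --- so the comparison is with that source rather than with anything in the present text. Your greedy, threshold-type construction (admit $n$ exactly when $f(a(n-1)+1)\leq f(g(n))$) is exactly the natural way to calibrate the counting function against $f\circ g$, and all four steps check out: the invariant $f(a(n))\leq f(g(n))$ by induction, infinitude of $A$ from $f(g(n))\to\infty$, infinitude of the skip times from the hypothesis $\frac{f(n)}{f(g(n))}\to\infty$ together with the subadditive bound $f(n-c)\geq f(n)-f(c)$, and the limit $1$ along skip times via $f(k)/(f(k)+f(1))\to 1$. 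The one step you pass over too quickly is the reduction ``$g$ may be taken non-decreasing.'' Since the conclusion concerns the \emph{original} $g$, this WLOG must be justified, and it genuinely needs both halves of Lemma \ref{gincreasing}: having built $A$ for the non-decreasing $g'$, the inequality $f(g'(n))\leq f(g(n))$ transfers boundedness back to $g$ (the $g$-ratio is dominated by the $g'$-ratio, which your invariant bounds by $1$), while the ideal equality $\iZ_{g'}(f)=\iZ_g(f)$ transfers non-convergence to $0$, because convergence of the ratio to $0$ is precisely membership of $A$ in the corresponding ideal; note also that the hypothesis survives the replacement, as $\frac{f(n)}{f(g'(n))}\geq\frac{f(n)}{f(g(n))}\to\infty$. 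With that sentence added (plus the trivial remark that the finitely many $n$ with $f(g(n))=0$ also have $f(a(n))=0$ by the invariant, so they do not threaten boundedness), your proof is complete and self-contained, and in spirit it almost certainly coincides with the construction in \cite{BDK}.
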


\begin{lemma}\label{result4}\cite[Lemma 3.4]{BDK}
If $f_1$ and $f_2$ are two modulus functions and $g_1,g_2\in \G$ are such that there exist $c_1,c_2>0$, $k\in\N$ for which $\frac{f_1(x)}{f_2(x)}\geq c_1$ for all $x\neq 0$ and $\frac{f_1(g_1(n))}{f_2(g_2(n))}\leq c_2$ for all $n\geq k$, then $\iZ_{g_1}(f_1) \subseteq \iZ_{g_2}(f_2)$.
\end{lemma}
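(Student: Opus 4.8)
The plan is to prove the inclusion directly, by dominating the $f_2$-density quotient of a set by a fixed constant multiple of its $f_1$-density quotient and then passing to the limit. Throughout, I fix an arbitrary $A \in \iZ_{g_1}(f_1)$ and write $a_n := |A \cap [1,n]|$ for its counting function; the goal is to show $A \in \iZ_{g_2}(f_2)$.

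First I would unpack the membership condition. Since the values of a modulus function are nonnegative, the lower density $\underline{d}^{f_1}_{g_1}(A)$ is automatically nonnegative, so the requirement $d^{f_1}_{g_1}(A)=0$ is equivalent to $\overline{d}^{f_1}_{g_1}(A)=0$, that is,
$$\lim_{n\to\infty}\frac{f_1(a_n)}{f_1(g_1(n))}=0.$$
Here the denominators are eventually positive, because $g_1\in\G$ forces $g_1(n)\to\infty$, hence $g_1(n)>0$ and therefore $f_1(g_1(n))>0$ for all large $n$ by property (i) of a modulus function; the same remark applies to $f_2(g_2(n))$.

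Next I would restate the two hypotheses in multiplicative form. The condition $\frac{f_1(x)}{f_2(x)}\geq c_1$ for $x\neq 0$ yields $f_2(x)\leq c_1^{-1}f_1(x)$ for every $x\neq 0$, while $\frac{f_1(g_1(n))}{f_2(g_2(n))}\leq c_2$ for $n\geq k$ yields $\frac{1}{f_2(g_2(n))}\leq\frac{c_2}{f_1(g_1(n))}$ for all such $n$. The key step is then to chain these two estimates: for $n\geq k$ with $a_n\neq 0$,
$$\frac{f_2(a_n)}{f_2(g_2(n))}\leq\frac{c_1^{-1}f_1(a_n)}{f_2(g_2(n))}\leq\frac{c_2}{c_1}\cdot\frac{f_1(a_n)}{f_1(g_1(n))}.$$
Letting $n\to\infty$, the right-hand side tends to $0$ by the first step, so $\overline{d}^{f_2}_{g_2}(A)=0$, whence $A\in\iZ_{g_2}(f_2)$. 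As $A$ was arbitrary, this is the claimed inclusion $\iZ_{g_1}(f_1)\subseteq\iZ_{g_2}(f_2)$.

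The argument is essentially a single computation, so I do not anticipate any serious obstacle; the only points requiring care are bookkeeping ones. I must confine the use of the first hypothesis to indices $n$ with $a_n\neq 0$, handling the case $a_n=0$ separately, where $f_2(a_n)=0$ by property (i) and the quotient is trivially $0$. I should also keep in mind that the two density quotients are well defined only once $n$ is large enough for the weight denominators to be positive, which is harmless since only the limiting behaviour is relevant. Notably, no earlier lemma of the excerpt is actually needed: the proof uses only the definition of $\iZ_g(f)$ together with the defining properties of modulus functions.
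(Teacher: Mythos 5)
Your proof is correct: the chain $f_2(a_n)\leq c_1^{-1}f_1(a_n)$ together with $1/f_2(g_2(n))\leq c_2/f_1(g_1(n))$ for large $n$ gives $\frac{f_2(a_n)}{f_2(g_2(n))}\leq \frac{c_2}{c_1}\cdot\frac{f_1(a_n)}{f_1(g_1(n))}\to 0$, and you handle the $a_n=0$ case and the positivity of the denominators properly. Note that the paper itself offers no proof of this statement --- it is quoted verbatim as Lemma 3.4 of \cite{BDK} --- and your two-step domination is exactly the standard argument that the citation points to, so there is nothing to flag.
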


\begin{proposition} \label{result5}
Let $f$ be an unbounded modulus function. If $g_{1},g_{2}\in \G$ are such that $\frac{f(n)}{f(g_2(n))} \geq a>0$ and
$\frac{f(g_{2}(n))}{f(g_{1}(n))} \to \infty$ then $\iZ_{g_{1}}(f) \subsetneq
\iZ_{g_{2}}(f)$.
\end{proposition}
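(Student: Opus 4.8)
The plan is to split the claimed strict inclusion into two tasks: the plain inclusion $\iZ_{g_1}(f)\subseteq\iZ_{g_2}(f)$, and the production of a single set lying in $\iZ_{g_2}(f)\setminus\iZ_{g_1}(f)$. For the inclusion I would simply invoke Lemma \ref{result4} with $f_1=f_2=f$. The first hypothesis there, $\frac{f(x)}{f(x)}\geq c_1$, holds trivially with $c_1=1$; and since the hypothesis $\frac{f(g_2(n))}{f(g_1(n))}\to\infty$ gives $\frac{f(g_1(n))}{f(g_2(n))}\to 0$, this ratio is eventually bounded above by any $c_2>0$. Thus Lemma \ref{result4} applies and yields $\iZ_{g_1}(f)\subseteq\iZ_{g_2}(f)$ immediately.

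The substance of the proposition is the strictness, and the first move I would make is to chain the two hypotheses together. Writing $\frac{f(n)}{f(g_1(n))}=\frac{f(n)}{f(g_2(n))}\cdot\frac{f(g_2(n))}{f(g_1(n))}$ and combining $\frac{f(n)}{f(g_2(n))}\geq a>0$ with $\frac{f(g_2(n))}{f(g_1(n))}\to\infty$, I obtain $\frac{f(n)}{f(g_1(n))}\to\infty$. This is precisely the hypothesis of Lemma \ref{result3} for the weight $g_1$, so I can extract a set $A\subset\N$ for which the sequence $\left(\frac{f(|A\cap[1,n]|)}{f(g_1(n))}\right)$ is bounded but does not converge to $0$.

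It then remains to check that this one set does the whole job. Because the sequence does not converge to $0$ and is nonnegative, its upper limit is positive, i.e. $\overline{d}^{f}_{g_1}(A)>0$, so $A\notin\iZ_{g_1}(f)$. On the other hand, factoring $\frac{f(|A\cap[1,n]|)}{f(g_2(n))}=\frac{f(|A\cap[1,n]|)}{f(g_1(n))}\cdot\frac{f(g_1(n))}{f(g_2(n))}$, the first factor is bounded by the above and the second tends to $0$, so the product tends to $0$ and $A\in\iZ_{g_2}(f)$. Hence $A\in\iZ_{g_2}(f)\setminus\iZ_{g_1}(f)$ and the inclusion is strict.

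The step I expect to require the most care is not a calculation but an observation: recognizing that the lower bound $\frac{f(n)}{f(g_2(n))}\geq a$ is exactly what converts $\frac{f(g_2(n))}{f(g_1(n))}\to\infty$ into the divergence $\frac{f(n)}{f(g_1(n))}\to\infty$ demanded by Lemma \ref{result3}; without such a lower bound the product could fail to diverge and the lemma would be unavailable. The complementary useful point is that any set whose $g_1$-density remains bounded is automatically annihilated by the faster-growing weight $g_2$, which is why a single application of Lemma \ref{result3} suffices to witness strictness rather than forcing a separate two-sided construction.
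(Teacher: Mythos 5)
Your proof is correct, and it takes a genuinely shorter route than the paper at the one substantive step. Both arguments get the inclusion $\iZ_{g_1}(f)\subseteq\iZ_{g_2}(f)$ identically, via Lemma \ref{result4} with $f_1=f_2=f$, and both use the same chaining $\frac{f(n)}{f(g_1(n))}=\frac{f(n)}{f(g_2(n))}\cdot\frac{f(g_2(n))}{f(g_1(n))}\to\infty$. The difference is where Lemma \ref{result3} is applied: you apply it directly at the weight $g_1$, obtaining $A$ with $\bigl(\frac{f(|A\cap[1,n]|)}{f(g_1(n))}\bigr)$ bounded but not null, so that $A\notin\iZ_{g_1}(f)$ is immediate and $A\in\iZ_{g_2}(f)$ follows from one ratio limit (bounded times $\frac{f(g_1(n))}{f(g_2(n))}\to 0$). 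The paper instead interpolates: it constructs an auxiliary weight $g_3$ with $f(g_3(n))=\sqrt{f(g_1(n))\,f(g_2(n))}$, checks $\frac{f(n)}{f(g_3(n))}\to\infty$, applies Lemma \ref{result3} at $g_3$, and then transfers in both directions, using $\frac{f(g_3(n))}{f(g_2(n))}\to 0$ for membership and $\frac{f(g_3(n))}{f(g_1(n))}\to\infty$ for non-membership. Your version is leaner: it needs only one transfer instead of two, and it sidesteps entirely the justification that $g_3$ exists and is admissible --- a point the paper passes over quickly, since defining $g_3$ requires the continuity (intermediate value property) of $f$, not merely monotonicity, and one must also verify $g_3\in\G$ before Lemma \ref{result3} can legitimately be invoked at $g_3$. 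What the paper's geometric-mean interpolation buys in exchange is a slightly stronger witness (at the $g_1$-scale its set has density sequence diverging along a subsequence, not merely non-null, since a non-null sequence is multiplied by a factor tending to infinity) and conformity with the standard technique of Balcerzak--Das--Filipczak--Swaczyna for separating weighted density ideals; but for the statement as formulated, your direct argument is complete and fully rigorous, including the small observations you make explicitly (nonnegativity plus non-convergence to $0$ forces $\overline{d}^{f}_{g_1}(A)>0$, and the lower bound $\frac{f(n)}{f(g_2(n))}\geq a$ is what makes the divergence hypothesis of Lemma \ref{result3} available).
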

\begin{proof}
Taking $f_1=f_2=f$ in Lemma \ref{result4}, we get
$\iZ_{g_{1}}(f) \subseteq \iZ_{g_{2}}(f)$. We choose a function $g_3:\N\to [0,\infty]$ in such a way that
$f(g_3 (n)):=\sqrt{f(g_{1}(n))\cdot f(g_{2}(n))}$ holds for all $n \in \N$. The existence of such a function $g_3$ is assured as also this function  is well-defined as the function $f$ is non-decreasing. Now
\begin{equation}\label{E1}
 \lim\limits_{n\to \infty} \frac{f(g_{1}(n))}{f(g_3(n))} =\lim\limits_{n\to \infty} \sqrt{\frac{f(g_{1}(n))}{f(g_2(n))}} = \lim\limits_{n \to \infty} \frac{f(g_3(n))}{f(g_{2}(n))} = 0.
\end{equation}
Since $\frac{f(n)}{f(g_2(n))}\geq a$ for some $a\in (0,\infty)$ and $\frac{f(g_2(n))}{f(g_1(n))}\to\infty$, we have
\begin{eqnarray*}
 \frac{f(n)}{f(g_1(n))} &=& \frac{f(n)}{f(g_2(n))}\cdot\frac{f(g_2(n))}{f(g_1(n))}\to\infty
 \\
\Rightarrow\ \frac{f(n)}{f(g_3(n))} &=& \sqrt{\frac{{f(n)}^2}{f(g_1(n))f(g_2(n))}}\to\infty .
\end{eqnarray*}
Therefore, from Lemma \ref{result3}, there exists $A\subset\N$ such that $(\frac{f(|A \cap [1,n]|)}{f(g_3(n))})_{n\in\N}$ is bounded but not convergent to zero. We claim that $A \in \iZ_{g_{2}}(f)\setminus
 \iZ_{g_{1}}(f)$. Indeed from (\ref{E1}) we have
\begin{equation*}
\frac{f(|A \cap [1,n]|)}{f(g_{2}(n))}\ =\ \frac{f(|A \cap [1,n]|)}{f(g_3(n))} \cdot \frac{f(g_3(n))}{f(g_{2}(n))} \to 0
\end{equation*}
 whereas
\begin{equation*}
\hspace{.3cm} \frac{f(|A \cap [1,n]|)}{f(g_{1}(n))}\ =\ \frac{f(|A
\cap [1,n]|)}{f(g_3(n))} \cdot \frac{f(g_3(n))}{f(g_{1}(n))} \not\to 0.
\end{equation*}
This shows that $A \in \iZ_{g_{2}}(f)\setminus \iZ_{g_{1}}(f)$.
\end{proof}

In \cite{BDK} comparisons were made between the ideals $\iZ_g$ and $\iZ_g(f)$ showing that for suitable choice of the modulus function $f$, $\iZ_{g} \neq \iZ_g(f)$ (see Proposition 2.5 and Remark 2.6 \cite{BDK}). However no comparative study was carried out between the ideals $\iZ(f)$ and $\iZ_g(f)$ and of particular interest is the question whether there are instances where these two ideals would be different which is answered in the  following results.

\begin{corollary}
For any unbounded modulus function $f$ and $g\in \G$ if $f(n)/f(g(n))\to\infty$ then $\iZ_{g}(f)
\subsetneq \iZ(f)$.
\end{corollary}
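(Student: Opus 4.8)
The plan is to recognize the ideal $\iZ(f)$ as the moduler simple density ideal associated to the identity weight, and then to obtain the strict inclusion as an immediate instance of Proposition \ref{result5}. Concretely, I would set $g_2 := \mathrm{id}$, the function $n \mapsto n$, and $g_1 := g$, and verify that Proposition \ref{result5} applies to this pair verbatim.

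First I would confirm that $\mathrm{id} \in \G$: it is non-decreasing, satisfies $\mathrm{id}(n) \to \infty$, and $\frac{n}{\mathrm{id}(n)} = 1 \nrightarrow 0$, so the identity is an admissible weight. Next I would identify $\iZ(f)$ with $\iZ_{\mathrm{id}}(f)$. The two definitions differ only in that $\overline{d}^f$ counts over the window $[0,n-1]$ while $\overline{d}^f_{\mathrm{id}}$ counts over $[1,n]$; since these windows have symmetric difference contained in $\{0,n\}$, the two counts $|A\cap[0,n-1]|$ and $|A\cap[1,n]|$ differ by at most $1$. Using the triangle inequality together with monotonicity of $f$, I would then bound the numerators as $\bigl| f(|A\cap[1,n]|) - f(|A\cap[0,n-1]|) \bigr| \le f(1)$, and since $f$ is unbounded and non-decreasing we have $f(n)\to\infty$, whence $\frac{f(1)}{f(n)}\to 0$. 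Dividing by $f(n)$ and passing to the limit superior then gives $\overline{d}^f(A) = \overline{d}^f_{\mathrm{id}}(A)$ for every $A\subset\N$, so that $\iZ(f) = \iZ_{\mathrm{id}}(f)$.

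Finally I would invoke Proposition \ref{result5} with $g_1 = g$ and $g_2 = \mathrm{id}$. Its first hypothesis becomes $\frac{f(n)}{f(\mathrm{id}(n))} = 1 \ge a$, valid with $a = 1 > 0$, while its second hypothesis becomes $\frac{f(\mathrm{id}(n))}{f(g(n))} = \frac{f(n)}{f(g(n))} \to \infty$, which is exactly the standing assumption of the corollary. The proposition then delivers $\iZ_g(f) = \iZ_{g_1}(f) \subsetneq \iZ_{g_2}(f) = \iZ_{\mathrm{id}}(f) = \iZ(f)$, which is the assertion. I do not expect a genuine obstacle here, as all the substance is carried by Proposition \ref{result5}; the only point requiring any care is the harmless mismatch of the counting windows $[0,n-1]$ versus $[1,n]$ in the definition of $\iZ(f)$, which is dispatched by the estimate $f(1)/f(n)\to 0$ recorded above.
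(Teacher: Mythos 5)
Your proposal is correct and follows essentially the same route as the paper, which likewise proves the corollary by applying Proposition \ref{result5} with $g_2(n)=n$ and $g_1=g$. Your additional verification that $\iZ(f)=\iZ_{\mathrm{id}}(f)$ despite the counting-window mismatch $[0,n-1]$ versus $[1,n]$ is a harmless elaboration of a point the paper leaves implicit.
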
\label{cc1}
\begin{proof}
Taking $g_2(n)=n$ for all $n\in\N$ and $g_1=g$ in Proposition \ref{result5}, we obtain $\iZ_{g}(f)
\subsetneq \iZ(f)$.
\end{proof}

\begin{proposition}\label{pnsubset}
For any unbounded modulus function $f$, there exists $g\in\G$ such that $\iZ(f) \subsetneq \iZ_g(f)$.
\end{proposition}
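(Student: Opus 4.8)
The plan is to produce strictness by exhibiting an explicit pair: a weight $g\in\G$ and a witnessing set $A$ with $A\in\iZ_g(f)\setminus\iZ(f)$. Since $\iZ(f)$ coincides with $\iZ_{g_0}(f)$ for $g_0(n)=n$ (this is exactly the identification used in the preceding corollary), the inclusion $\iZ(f)\subseteq\iZ_g(f)$ will come for free as soon as $g(n)\geq n$ for all $n$: then $f(g(n))\geq f(n)$, hence $\overline{d}^{f}_{g}(B)\leq\overline{d}^{f}(B)$ for every $B\subseteq\N$. (This is also the instance of Lemma \ref{result4} with $f_1=f_2=f$, $g_1=g_0$, $g_2=g$, taking $c_1=c_2=1$.) All the real content therefore lies in arranging that the inclusion is proper, and the shape of $g$ is essentially dictated by the constraint $n/g(n)\nrightarrow 0$, which forbids $f(g(n))$ from dominating $f(n)$ uniformly and so forces $g$ back to the diagonal along a subsequence.

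Accordingly I would fix a rapidly increasing sequence of checkpoints $n_0=1<n_1<n_2<\cdots$ and define $g$ to be the step function $g(n)=n_j$ for $n\in(n_{j-1},n_j]$. This $g$ is non-decreasing, tends to $\infty$, and satisfies $g(n)\geq n$; moreover $g(n_j)=n_j$, so $n_j/g(n_j)=1$ and hence $n/g(n)\nrightarrow 0$, giving $g\in\G$. The checkpoints would be chosen, using that $f$ is unbounded, so fast that $f(n_j)/f(n_{j-1})\to\infty$ and $n_j>2n_{j-1}$ for every $j$.

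For the witness I would take $A$ to be a solid block at the start of each interval, namely $A\cap(n_{j-1},n_j]=(n_{j-1},2n_{j-1}]\cap\N$, so that its partial counts are controlled by the checkpoints. Writing $c_j=|A\cap[1,n_j]|=\sum_{i\leq j}n_{i-1}$, the condition $n_i>2n_{i-1}$ gives $c_j\leq 2n_{j-1}$, whence $f(c_j)\leq 2f(n_{j-1})$ by subadditivity. Since $g\equiv n_j$ on $(n_{j-1},n_j]$ and $|A\cap[1,n]|\leq c_j$ there, every ratio defining $\overline{d}^{f}_{g}(A)$ on this interval is at most $f(c_j)/f(n_j)\leq 2f(n_{j-1})/f(n_j)\to 0$; thus $\overline{d}^{f}_{g}(A)=0$ and $A\in\iZ_g(f)$. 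On the other hand, at $m_j=2n_{j-1}$ one has $|A\cap[1,m_j]|\geq n_{j-1}=m_j/2$, and subadditivity yields $f(m_j/2)\geq f(m_j)/2$, so the plain $f$-density ratio $f(|A\cap[1,m_j]|)/f(m_j)\geq 1/2$ along the subsequence $(m_j)$. Hence $\overline{d}^{f}(A)\geq 1/2>0$ and $A\notin\iZ(f)$, completing the strict inclusion.

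The main obstacle, which governs the entire construction, is reconciling $n/g(n)\nrightarrow 0$ with the need to drive the two densities apart: because $g$ must touch the identity at each checkpoint $n_j$, one cannot let the positive $f$-density of $A$ survive at those points. The device that resolves this is to make $g$ jump immediately after a checkpoint to the value $n_j$ of the \emph{next} checkpoint, so that the interior point $m_j$ where $A$ is dense is divided by $f(n_j)\gg f(m_j)$ in the weighted density $\overline{d}^{f}_{g}$ while being divided only by $f(m_j)$ in $\overline{d}^{f}$. The one delicate verification is that the cumulative counts $c_j$ remain $O(f(n_{j-1}))$ at the checkpoints themselves (where $g$ equals the identity), and this is precisely where the fast growth of $(n_j)$ must be invoked.
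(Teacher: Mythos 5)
Your proof is correct and follows essentially the same strategy as the paper's: a step weight $g$ with $g(n)\geq n$ that returns to the diagonal along a subsequence of checkpoints (ensuring $n/g(n)\nrightarrow 0$ and, via Lemma \ref{result4}, $\iZ(f)\subseteq\iZ_g(f)$), together with a witness set built from interval blocks whose $f$-counts are crushed by the jump of $g$ to the next checkpoint but remain $f$-dense at interior points. The only differences are cosmetic: the paper uses a four-indexed subsequence $b_k=a_{4k-2}$, $c_k=a_{4k-1}$, $d_k=a_{4k}$ with $f(a_{n+1})>nf(a_n)$ and gets $\overline{d}^f(A)\to 1$ via $f(c_k-b_k)\geq f(c_k)-f(b_k)$, whereas you use geometric doubling of checkpoints and the subadditivity bound $f(m_j/2)\geq f(m_j)/2$ to get $\overline{d}^f(A)\geq 1/2$, which suffices equally well.
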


\begin{proof}
Let $f$ be an unbounded modulus function. Since $f$ is non-decreasing, there exists a strictly increasing sequence of natural numbers $(a_n)$ such that $f(a_{n+1})>nf(a_n)$. Set $b_n=a_{4n-2}$ and $d_n=a_{4n}$ for all $n\in\N$. Now we define a function $g$ in the following way
$$
g(n)=
  \begin{cases}
   d_k & \text{for}\ b_k<n\leq d_k \\
   n & \text{for}\ d_k<n\leq b_{k+1}.
  \end{cases}
$$
 From the construction, it is evident that  $g$ is non-decreasing. Since $g(n)\to\infty$ and $\frac{d_k}{g(d_k)}=1$ for all $k\in\N$ we have $\frac{n}{g(n)}\nrightarrow 0$ and therefore $g\in\G$. Note that $g(n)\geq n$ for all $n\in\N$. As $f$ is non-decreasing, $f(g(n))\geq f(n)$ i.e. $\frac{f(n)}{f(g(n))}\leq 1$ for all $n\in\N$. Therefore, in view of Lemma \ref{result4}, we have $\iZ(f) \subseteq \iZ_g(f)$.

 Let us define $A=\bigcup\limits_{k=1}^{\infty}(b_k,c_k]$ where $c_k=a_{4k-1}<d_k$. Consider any $m\in\N$ and set $n_m=b_m$. Now for any $n>n_m$ we have either $n\in (b_{m_0},c_{m_0}]$ or $n\in (c_{m_0},d_{m_0}]$ or $n\in (d_{m_0},b_{m_0+1}]$ for some natural number $m_0\geq m$.\\ For $n\in (b_{m_0},c_{m_0}]$, we have
$$
\frac{f(|A\cap[1,n]|)}{f(g(n))}\leq \frac{f(|A\cap[1,c_{m_0}]|)}{f(d_{m_0})} \leq \frac{f(c_{m_0})}{f(d_{m_0})} < \frac{1}{4m_0-1}\leq  \frac{1}{m}.
$$
For $n\in (c_{m_0},d_{m_0}]$, we have
$$
\frac{f(|A\cap[1,n]|)}{f(g(n))}\leq \frac{f(|A\cap[1,d_{m_0}]|)}{f(d_{m_0})}= \frac{f(|A\cap[1,c_{m_0}]|)}{f(d_{m_0})} \leq \frac{f(c_{m_0})}{f(d_{m_0})} < \frac{1}{4m_0-1}\leq  \frac{1}{m}.
$$
And, for $n\in (d_{m_0},b_{m_0+1}]$ we also have
$$
\frac{f(|A\cap[1,n]|)}{f(g(n))}\leq \frac{f(|A\cap[1,b_{m_0+1}]|)}{f(n)}= \frac{f(|A\cap[1,c_{m_0}]|)}{f(n)} \leq \frac{f(c_{m_0})}{f(d_{m_0})} < \frac{1}{4m_0-1}\leq  \frac{1}{m}.
$$
As a result we can conclude that $\frac{f(|A\cap[1,n]|)}{f(g(n))}<\frac{1}{m}$ for all $n>n_m$. Since $m\in\N$ was chosen arbitrarily we obtain $\frac{f(|A\cap[1,n]|)}{f(g(n))}\to 0$ i.e. $A\in\iZ_g(f)$. But $A\not\in\iZ(f)$, because for all $k\in\N$, observe that
$$
\frac{f(|A\cap[1,c_k]|)}{f(c_k)}\geq \frac{f(c_k-b_k)}{f(c_k)}\geq \frac{f(c_k)-f(b_k)}{f(c_k)}\to 1 \mbox{    (Since, $ \lim\limits_{k\to\infty} \frac{f(a_{4k-2})}{f(a_{4k-1})}=0$)}.
$$
Thus we conclude that $\iZ(f) \subsetneq \iZ_g(f)$.
\end{proof}

\begin{remark}\label{remarkc}
 For each $A=\{n_1<n_2<\ldots<n_k<\ldots\}\subseteq\N$ we can define
$$
g_A(n)=
  \begin{cases}
   d_{n_k} & \text{for }\ b_{n_k}<n\leq d_{n_k} \\
   n & \text{otherwise}
  \end{cases}
.$$
It is easy to verify that $g_A\in\G$ and $\iZ(f) \subsetneq \iZ_{g_A}(f)$. Therefore, there exists $\mathfrak c$ many choice of $g\in\G$ for which $\iZ(f) \subsetneq \iZ_{g}(f)$.
\end{remark}

We will end this section with some more comparison results which are interesting in their own right and will also come to our use in the last section of this article. The following lemma will play the most vital role hereafter.

\begin{lemma}\label{pgeneral}
For any two unbounded modulus functions $f_1,f_2$, there exists a strictly increasing sequence $(a_n)$ of natural numbers such that $\frac{f_1(a_{n+1})}{f_1{(a_n)}}>n$, $\frac{f_2(a_{n+1})}{f_2{(a_n)}}>n$ and $a_{n+1}> 2 a_n$.
\end{lemma}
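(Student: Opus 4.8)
The plan is to construct $(a_n)$ by a direct induction, at each stage choosing the next term large enough to meet all three requirements at once. The only structural fact I would need is that an unbounded, non-decreasing function tends to infinity: since each $f_i$ ($i=1,2$) satisfies property (iii) (non-decreasing) and is unbounded, we have $\lim_{x\to\infty} f_i(x) = \infty$. In particular, subadditivity and continuity of the moduli are not used here; only monotonicity and unboundedness matter, together with property (i), which guarantees $f_i(x) > 0$ whenever $x \geq 1$.

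For the base case I would set $a_1 := 1$, so that $f_i(a_1) > 0$ for both $i$. Assume now that $a_1 < a_2 < \cdots < a_n$ have been chosen with $f_i(a_n) > 0$ for $i = 1,2$. Then $n f_1(a_n)$ and $n f_2(a_n)$ are fixed finite reals. Using $\lim_{x\to\infty} f_i(x) = \infty$, I would pick for each $i$ a natural number $N_i$ with $f_i(x) > n f_i(a_n)$ for all $x \geq N_i$, and then define $a_{n+1} := \max\{N_1, N_2, 2a_n + 1\}$. By construction $a_{n+1}$ is a natural number satisfying $a_{n+1} > 2a_n$, $f_1(a_{n+1}) > n f_1(a_n)$ and $f_2(a_{n+1}) > n f_2(a_n)$, which are exactly the three desired inequalities at stage $n$; moreover $f_i(a_{n+1}) > 0$, so the induction continues.

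Since $a_{n+1} > 2a_n \geq a_n$, the sequence produced is strictly increasing, and the three displayed inequalities hold for every $n$. I do not expect any genuine obstacle: the two ratio conditions force only a lower bound on $a_{n+1}$ (because the left-hand sides $f_i(a_{n+1})$ grow without bound in $a_{n+1}$ while the right-hand sides $n f_i(a_n)$ are frozen once $a_n$ is fixed), and $a_{n+1} > 2a_n$ is again just a lower bound on $a_{n+1}$. All three are therefore simultaneously satisfiable by taking $a_{n+1}$ sufficiently large, which is precisely what the construction does.
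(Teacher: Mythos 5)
Your proof is correct and is essentially the same as the paper's: both arguments build $(a_n)$ by induction, using only that an unbounded non-decreasing modulus tends to infinity to choose $a_{n+1}$ large enough to satisfy the three lower-bound conditions simultaneously. The only cosmetic difference is that the paper demands the slightly stronger condition $\min\{f_1(r),f_2(r)\}>n\max\{f_1(a_n),f_2(a_n)\}$ at each step, whereas your coordinate-wise bounds $f_i(a_{n+1})>n f_i(a_n)$ are exactly what the lemma asserts and suffice.
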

\begin{proof}
Set $a_1$=1. Choose $a_2\in\N$ in such a way that $a_2>2a_1$ while $f_1(a_2)>f_1(1),f_1(a_2)>f_2(1)$ and $f_2(a_2)>f_1(1),f_2(a_2)>f_2(1)$ are satisfied. Now inductively we define
$$
a_{n+1}=\{r\in\N: \ \min\{f_1(r),f_2(r)\}>n\max\{f_1(a_n),f_2(a_n)\} \ \mbox{ and }r>2a_n \} \ \mbox{  for all } n\in\N.
$$
Since $f_1(n)\to\infty$ and $f_2(n)\to\infty$, so $a_{n+1}$ is well defined for all $n\in\N$. From the construction, it is clear that $(a_n)$ is strictly increasing and $\frac{f_1(a_{n+1})}{f_1{(a_n)}}>n$, $\frac{f_2(a_{n+1})}{f_2{(a_n)}}>n$ and $a_{n+1}> 2 a_n$ for all $n\in\N$.
\end{proof}

Our next result is the most important observation regarding the ideals $\iZ_g(f)$ in this section (which was left out in \cite{BDK}) where we show that there exists an antichain of cardinality $\mathfrak c$ of such ideals in line of Theorem 2.7 \cite{BDFS}. In order to prove the result, we will need the following: Two sets $P,Q\subseteq\N$  are said to be almost disjoint if $P\cap Q$ is finite. Theorem 5.35 \cite{Bu} assures the existence of a family $\mathcal J$ of infinite pairwise almost disjoint subsets of $\N$ with $|\mathcal{J}| = \mathfrak c$ .

\begin{theorem}\label{tncomparableg}
For any two unbounded modulus functions $f_1,f_2$, there exists a family $\G_0\subseteq \G$ of cardinality $\mathfrak c$ such that $\iZ_g(f_i)$ is incomparable with $\iZ(f_j)$ for each $g\in\G_0$ and $i,j\in\{1,2\}$. Also $\iZ_{g_1}(f_i)$, $\iZ_{g_2}(f_j)$ are incomparable for $i,j\in\{1,2\}$ and any two distinct $g_1,g_2\in\G_0$.
\end{theorem}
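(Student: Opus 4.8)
The plan is to build $\G_0$ from an almost disjoint family by attaching to each member a weight function whose graph alternates between ``high plateaus'' (where $g$ sits far above the diagonal) and ``low plateaus'' (where $g$ sits far below it), the positions of the high plateaus being dictated by the chosen almost disjoint set. First I would invoke Lemma \ref{pgeneral} to fix one strictly increasing sequence $(a_n)$ with $f_1(a_{n+1})/f_1(a_n)>n$, $f_2(a_{n+1})/f_2(a_n)>n$ and $a_{n+1}>2a_n$ for every $n$, so that all estimates below hold simultaneously for $f_1,f_2$ and so that cumulative counts are controlled by the last block (this is where $a_{n+1}>2a_n$ is used: $\sum_{\text{earlier blocks}}\le 2\cdot(\text{last block})$). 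Partition the index set into consecutive triples, reserve the odd triples, re-indexed by $m\in\N$, for potential high plateaus and the even triples for low plateaus. Fixing an almost disjoint family $\mathcal J$ with $|\mathcal J|=\cont$, for each $J\in\mathcal J$ define $g_J$ by: on the $m$-th high triple $(a_{6m-5},a_{6m-3}]$ set $g_J(n)=a_{6m-3}$ if $m\in J$ and $g_J(n)=n$ otherwise; on every low triple set $g_J(n)=a_{6m-2}$ on $(a_{6m-2},a_{6m}]$; and $g_J(n)=n$ elsewhere. A check at the breakpoints shows $g_J$ is non-decreasing with $g_J(n)\to\infty$, and $g_J(n)=n$ on the diagonal stretches gives $n/g_J(n)\nrightarrow 0$, so $g_J\in\G$; distinct $J$ give distinct $g_J$ (they differ on the high triples indexed by $J_1\triangle J_2$), whence $\G_0:=\{g_J:J\in\mathcal J\}$ has cardinality $\cont$.

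Next I would introduce the witnessing sets. For $K\subseteq\N$ put $A_K=\bigcup_{m\in K}(a_{6m-5},a_{6m-4}]$, the first half of each selected high triple, and let $B=\bigcup_{m\in\N}(a_{6m}-a_{6m-2},a_{6m}]$ be the terminal block of length $a_{6m-2}$ inside each low plateau (doubling gives $a_{6m}-a_{6m-2}>a_{6m-1}$, so $B$ really sits at the top of the plateau). The three estimates I need, each a direct imitation of the computation in Proposition \ref{pnsubset} together with the doubling bound, are: (I) if $K\subseteq J$ then $A_K\in\iZ_{g_J}(f_i)$, since on each high plateau $f_i(a_{6m-3})$ dominates $f_i(|A_K\cap[1,n]|)$ by the factor $f_i(a_{6m-3})/f_i(a_{6m-4})>6m-4$, and on the low plateaus the weight $f_i(a_{6m-2})$ already exceeds the cumulative count; (II) if infinitely many $m\in K$ fall on a diagonal stretch of a weight $h$ (in particular $h=\mathrm{id}$, or $h=g_{J'}$ with $m\notin J'$), then $A_K\notin\iZ_{h}(f_j)$, because at $n=a_{6m-4}$ the ratio exceeds $1-f_j(a_{6m-5})/f_j(a_{6m-4})\to 1$; and (III) $B\in\iZ(f_j)$ for both $j$, because the terminal placement forces $|B\cap[1,n]|$ to reach $a_{6m-2}$ only once $n$ is of order $a_{6m}$, giving identity-ratio at most of order $f_j(a_{6m-2})/f_j(a_{6m-1})\to 0$, whereas $B\notin\iZ_{g_J}(f_i)$ for every $J$ since at $n=a_{6m}$ the low weight is exactly $f_i(a_{6m-2})\le f_i(|B\cap[1,n]|)$, so that ratio is $\ge 1$.

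With these in hand both assertions follow for arbitrary $i,j$. For the first, given $g_J\in\G_0$, the set $A_J$ lies in $\iZ_{g_J}(f_i)\setminus\iZ(f_j)$ by (I) with $K=J$ and (II) with $h=\mathrm{id}$, while $B$ lies in $\iZ(f_j)\setminus\iZ_{g_J}(f_i)$ by (III); hence $\iZ_{g_J}(f_i)$ and $\iZ(f_j)$ are incomparable. For the second, given distinct $J_1,J_2\in\mathcal J$, almost disjointness makes $J_1\setminus J_2$ and $J_2\setminus J_1$ infinite; then $A_{J_1\setminus J_2}\in\iZ_{g_{J_1}}(f_i)\setminus\iZ_{g_{J_2}}(f_j)$ by (I) and by (II) with $h=g_{J_2}$, and symmetrically $A_{J_2\setminus J_1}$ witnesses the reverse non-inclusion, so $\iZ_{g_{J_1}}(f_i)$ and $\iZ_{g_{J_2}}(f_j)$ are incomparable.

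The crux, and the point that forces the construction to be genuinely richer than the one-sided $g_A$ of Remark \ref{remarkc}, is that each $\iZ_{g_J}(f_i)$ must fail both to contain and to be contained in $\iZ(f_j)$. Since a weight lying everywhere above the diagonal yields $\iZ(f)\subseteq\iZ_{g}(f)$, incomparability with $\iZ(f_j)$ cannot be obtained from high plateaus alone; the low plateaus are indispensable, and the real delicacy is to place $B$ at the \emph{top} of each low plateau so that it is simultaneously $\iZ(f_j)$-null (sparse against the diagonal) and $\iZ_{g_J}(f_i)$-non-null (dense against the depressed weight $a_{6m-2}$). Coordinating the two kinds of plateaus so that the high ones separate different members of $\G_0$ while the universal low ones do not interfere is the main bookkeeping obstacle; everything else reduces to the ratio estimates already appearing in Proposition \ref{pnsubset}.
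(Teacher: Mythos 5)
Your proposal is correct and is essentially the paper's own argument: the paper likewise fixes $(a_n)$ via Lemma \ref{pgeneral}, indexes plateau weights $g_P$ by an almost disjoint family of cardinality $\mathfrak c$, places a witness block at the start of each selected plateau (its $A_P$, lying in $\iZ_{g_P}(f_i)\setminus\iZ(f_j)$ and, by almost disjointness, outside $\iZ_{g_Q}(f_j)$ for $Q\neq P$ since $g_Q$ is the identity there), and places a terminal block of length $d_{p_k}$ under the depressed weight (its $B_P$, lying in $\iZ(f_j)\setminus\iZ_{g_P}(f_i)$), with the same subadditivity and doubling estimates you use. The only difference is organizational: the paper's single plateau $g_P=d_{p_k}$ on $(b_{p_k},b_{p_k+1}]$ plays the role of your high plateau on its initial segment and of your low plateau on its terminal segment (so its low witness $B_P$ depends on $P$), whereas you decouple the two plateau types and obtain one universal witness $B$ --- a harmless variation.
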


\begin{proof}

Let $f_1,f_2$ be two unbounded modulus functions. In view of Lemma \ref{pgeneral}, we can find a strictly increasing sequence of natural numbers $(a_n)$ such that $\frac{f_i(a_{n+1})}{f_i(a_n)}>n$ for $i\in\{1,2\}$ and $a_{n+1}>2a_n$. Set $b_n=a_{4n-2}$ and $d_n=a_{4n}$ for all $n\in\N$. Now for any $P=\{p_1<p_2<\ldots<p_k<\ldots\}\in\mathcal{J}$, we define
$$
g_P(n)=
  \begin{cases}
   d_{p_k} & \text{for}\ b_{p_k}<n\leq b_{p_k+1} \\
   n & \text{otherwise}.
  \end{cases}
$$
Note that $g_P\in\G$. Consider the set
$$
A_P=\bigcup\limits_{k=1}^{\infty}(b_{p_k},c'_{p_k}] \ \mbox{   where, } c'_{p_k}=a_{4p_k-1}<d_{p_k}.
$$
We take any $m\in\N$ and set $n_m=b_{p_m}$. Now for any $n>n_m$ we have either $n\in (b_{p_{m_0}},b_{p_{m_0}+1}]$ or $n\in (b_{p_{m_0}+1},b_{p_{(m_0+1)}}]$ for some natural number $m_0\geq m$.\\
For $n\in (b_{p_{m_0}},b_{p_{m_0}+1}]$ and  $i\in\{1,2\}$, we have
$$
\frac{f_i(|A_P\cap[1,n]|)}{f_i(g_P(n))}\leq \frac{f_i(|A_P\cap[1,b_{p_{m_0}+1}]|)}{f_i(d_{p_{m_0}})}= \frac{f_i(|A_P\cap[1,c'_{p_{m_0}}]|)}{f_i(d_{p_{m_0}})} \leq \frac{f_i(c'_{p_{m_0}})}{f_i(d_{p_{m_0}})} < \frac{1}{4m_0-1}\leq  \frac{1}{m}.
$$
On the other hand, for $n\in (b_{p_{m_0}+1},b_{p_{(m_0+1)}}]$ and $i\in\{1,2\}$ we also have
$$
\frac{f_i(|A_P\cap[1,n]|)}{f_i(g_P(n))}\leq \frac{f_i(|A_P\cap[1,b_{p_{(m_0+1)}}]|)}{f_i(n)}= \frac{f_i(|A_P\cap[1,c'_{p_{m_0}}]|)}{f_i(n)} \leq \frac{f_i(c'_{p_{m_0}})}{f_i(d_{p_{m_0}})} < \frac{1}{4m_0-1}\leq  \frac{1}{m}.
$$
As a result we can conclude that $\frac{f_i(|A_P\cap[1,n]|)}{f_i(g_P(n))}<\frac{1}{m}$ for all $n>n_m$. Since $m\in\N$ was chosen arbitrarily we obtain $\frac{f_i(|A_P\cap[1,n]|)}{f_i(g_P(n))}\to 0$ i.e. $A_P\in\iZ_{g_P}(f_i)$ for $i\in\{1,2\}$. But $A_P\not\in\iZ(f_j)$ for $j\in\{1,2\}$, because for all $k\in\N$, observe that
$$
\frac{f_j(|A_P\cap[1,c'_{p_k}]|)}{f_j(c'_{p_k})}\geq \frac{f_j(c'_{p_k}-b_{p_k})}{f_j(c'_{p_k})}\geq \frac{f_j(c'_{p_k})-f_j(b_{p_k})}{f_j(c'_{p_k})}\to 1 \mbox{    (Since, $ \lim\limits_{k\to\infty} \frac{f_j(a_{4p_k-2})}{f_j(a_{4p_k-1})}=0$)}.
$$
Thus we conclude that $A_P\in \iZ_{g_P}(f_i)\setminus\iZ(f_j)$ where $i,j\in\{1,2\}$.

Now consider the set
$$
B_P=\bigcup\limits_{k=1}^{\infty}(c_{p_k+1},b_{p_k+1}] \ \mbox{  where, } c_{p_k+1}=b_{p_k+1}-d_{p_k}.
$$
Observe that $c_{p_k+1}=b_{p_k+1}-d_{p_k}=a_{4p_k+2}-a_{4p_k}> 3a_{4p_k}> d_{p_k}$. We take any $m\in\N$ and set $n_m=b_{p_m}$. Now for any $n>n_m$ we have either $n\in (b_{p_{m_0}},c_{p_{m_0}+1}]$ or $n\in (c_{p_{m_0}+1},b_{p_{(m_0+1)}}]$ for some natural number $m_0\geq m$.\\
For $n\in (c_{p_{m_0}+1},b_{p_{(m_0+1)}}]$ and  $j\in\{1,2\}$, we have
\begin{eqnarray*}
\frac{f_j(|B_P\cap[1,n]|)}{f_j(n)} &\leq& \frac{f_j(|B_P\cap[1,b_{p_{(m_0+1)}}]|)}{f_j(c_{p_{m_0}+1})} \\  &=&  \frac{f_j(|B_P\cap[1,b_{p_{m_0}+1}]|)}{f_j(c_{p_{m_0}+1})} \\ &\leq& \frac{f_j(p_{m_0}d_{p_{m_0}})}{f_j(c_{p_{m_0}+1})} \leq \frac{p_{m_0}f_j(d_{p_{m_0}})}{f_j(a_{4p_{m_0}+2}- a_{4p_{m_0}})} \\ &<& \frac{f_j(a_{4p_{m_0}+1})}{f_j(a_{4p_{m_0}+2})-f_j(a_{4p_{m_0}})} \leq\frac{1}{4p_{m_0}}\leq \frac{1}{4m_0}\leq\frac{1}{m}
\end{eqnarray*}
On the other hand, for $n\in (b_{p_{m_0}},c_{p_{m_0}+1}]$ and $j\in\{1,2\}$ we also have
\begin{eqnarray*}
\frac{f_j(|B_P\cap[1,n]|)}{f_j(n)} &\leq& \frac{f_j(|B_P\cap[1,c_{p_{m_0}+1}]|)}{f_j(b_{p_{m_0}})} \\  &=&  \frac{f_j(|B_P\cap[1,b_{p_{(m_0-1)}+1}]|)}{f_j(b_{p_{m_0}})} \\ &\leq& \frac{f_j(p_{(m_0-1)}d_{p_{(m_0-1)}})}{f_j(b_{p_{m_0}})} \leq \frac{p_{(m_0-1)}f_j(d_{p_{(m_0-1)}})}{f_j(a_{4p_{m_0}-2})} \\ &<& \frac{f_j(a_{4p_{(m_0-1)}+1})}{f_j(a_{4p_{m_0}-2})} \leq \frac{f_j(a_{4p_{m_0}-3})}{f_j(a_{4p_{m_0}-2})}\leq \frac{1}{4m_0-3}\leq\frac{1}{m}
\end{eqnarray*}
As a result we can conclude that $\frac{f_j(|B_P\cap[1,n]|)}{f_j(g_P(n))}<\frac{1}{m}$ for all $n>n_m$. Since $m\in\N$ was chosen arbitrarily we obtain $\frac{f_j(|B_P\cap[1,n]|)}{f_j(g_P(n))}\to 0$ i.e. $B_P\in\iZ(f_j)$ where $j\in\{1,2\}$. But $B_P\not\in \iZ_{g_P}(f_i)$ for $i\in\{1,2\}$, since for all $k\in\N$ we must have
$$
\frac{f_i(|B_P\cap[1,b_{p_k+1}]|)}{f_i(g_P(b_{p_k+1}))}\geq \frac{f_i(b_{p_k+1}-c_{p_k+1})}{f_i(d_{p_k})}=\frac{f_i(d_{p_k})}{f_i(d_{p_k})}=1.
$$
Thus $B_P\in\iZ(f_j)\setminus\iZ_{g_P}(f_i)$ where $i,j\in\{1,2\}$.

Let us next define $\G_0=\{g_P\in\G: \ P\in\mathcal{J}\}$. It has already been observed that $\iZ_g(f_i)$ is incomparable with $\iZ(f_j)$ for any $g\in\G_0$ and $i,j\in\{1,2\}$.
Now, consider any two distinct sets $P=\{p_1<p_2<\ldots<p_k<\ldots\}, \ Q=\{q_1<q_2<\ldots<q_k<\ldots\} \in\mathcal{J}$. We intend to show that $\iZ_{g_P}(f_i)$, $\iZ_{g_Q}(f_j)$ are incomparable where $i,j\in\{1,2\}$.
 We already have $A_P\in\iZ_{g_P}(f_i)$ and $A_Q\in\iZ_{g_Q}(f_j)$ where $i,j\in\{1,2\}$ and $A_Q,g_Q$ are similarly constructed as $A_P,g_P$ respectively.

In view of the fact that $P,Q$ are infinite and almost disjoint there exists $k_0\in\N$ such that for all $k>k_0$ we have
\begin{eqnarray*}
q_{j(k)} &<& p_k<q_{j(k)+1}
 \\
  \Rightarrow ~~~ q_{j(k)}+1 &\leq& p_k<q_{j(k)+1}
 \\
\Rightarrow \hspace{.3cm} b_{q_{j(k)}+1} &\leq& b_{p_k}<b_{q_{j(k)+1}}
 \\
\Rightarrow \hspace{.3cm} b_{q_{j(k)}+1} &\leq& b_{p_k}<c_{p_k}<b_{q_{j(k)+1}}.
\end{eqnarray*}
From the construction of $g_Q$, it follows that $g_Q(c_{p_k})=c_{p_k}$ and consequently for all $k\in\N$ and $j\in\{1,2\}$ we have,
$$
\frac{f_j(|A_P\cap[1,c_{p_k}]|)}{f_j(g_Q(c_{p_k}))}\geq \ \frac{f_j(c_{p_k}-b_{p_k})}{f_j(c_{p_k})}\geq \ \frac{f_j(c_{p_k})-f_j(b_{p_k})}{f_j(c_{p_k})}\to 1.
$$
This shows that $A_P\not\in\iZ_{g_Q}(f_j)$ for $j\in\{1,2\}$. Similarly we can also show that $A_Q\not\in\iZ_{g_P}(f_i)$ for $i\in\{1,2\}$. Thus, we can conclude that $\iZ_{g_1}(f_i)$ and $\iZ_{g_2}(f_j)$ are incomparable for $i,j\in\{1,2\}$ and any two distinct $g_1,g_2\in\G_0$.
\end{proof}

\begin{corollary}\label{c1ncomparable}
For any unbounded modulus function $f$, there exists $g\in\G$ such that $\iZ_g(f)$ is not comparable with $\iZ(f)$.
\end{corollary}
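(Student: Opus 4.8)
The plan is to obtain this statement as an immediate specialization of Theorem \ref{tncomparableg}. That theorem is phrased for an arbitrary pair of unbounded modulus functions $f_1,f_2$, but its hypotheses place no restriction forcing $f_1$ and $f_2$ to differ. Accordingly, I would simply set $f_1=f_2=f$ and invoke the theorem.

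With this choice, the theorem produces a family $\G_0\subseteq\G$ of cardinality $\mathfrak{c}$ such that $\iZ_g(f_i)$ is incomparable with $\iZ(f_j)$ for every $g\in\G_0$ and all $i,j\in\{1,2\}$. Reading off the case $i=j=1$ gives precisely that $\iZ_g(f)$ is incomparable with $\iZ(f)$ for each $g\in\G_0$. Since $\G_0$ is nonempty --- indeed it has cardinality $\mathfrak{c}$ --- choosing any $g\in\G_0$ yields the desired $g\in\G$ with $\iZ_g(f)$ not comparable with $\iZ(f)$, which completes the argument.

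The only point requiring a moment's care is that the construction underlying Theorem \ref{tncomparableg} remains valid in the degenerate case $f_1=f_2$. This is automatic: Lemma \ref{pgeneral} applied to the pair $(f,f)$ still delivers a strictly increasing sequence $(a_n)$ with $\frac{f(a_{n+1})}{f(a_n)}>n$ and $a_{n+1}>2a_n$ (the two growth conditions simply coincide), and every subsequent estimate in the proof of the theorem is written uniformly in the index $i$, so nothing breaks when the indices refer to the same function. Hence there is no genuine obstacle here; the corollary is a clean consequence of the more general theorem, and one in fact obtains $\mathfrak{c}$ many witnesses $g$ rather than just one.
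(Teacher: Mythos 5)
Your proposal is correct and takes exactly the paper's route: the corollary is stated there without a separate proof precisely because it is the specialization $f_1=f_2=f$ of Theorem \ref{tncomparableg}, and your observation that Lemma \ref{pgeneral} and all subsequent estimates in that theorem's proof degenerate harmlessly when the two modulus functions coincide is accurate. Nothing further is needed.
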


\begin{proposition}\label{ppnsubset}
For any unbounded modulus function $f$, there exist $g_1,g_2\in\G$ such that $\iZ_{g_1}(f)\subsetneq  \iZ(f) \subsetneq \iZ_{g_2}(f)$.
\end{proposition}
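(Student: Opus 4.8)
The plan is to assemble the two strict inclusions separately, each as an immediate consequence of a result already proved in this section, so that almost no new work is required. The right-hand inclusion $\iZ(f)\subsetneq\iZ_{g_2}(f)$ is delivered verbatim by Proposition \ref{pnsubset}, which produces a suitable $g_2\in\G$. Hence the whole task collapses to exhibiting a single $g_1\in\G$ with $f(n)/f(g_1(n))\to\infty$: once this is in hand, Corollary \ref{cc1} (equivalently, Proposition \ref{result5} taken with $g_2(n)=n$) gives at once $\iZ_{g_1}(f)\subsetneq\iZ(f)$, and concatenating the two inclusions finishes the argument.

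To build $g_1$ I would begin, exactly as in the proof of Proposition \ref{pnsubset}, by invoking the unboundedness of $f$ to fix a strictly increasing sequence $(a_n)$ of naturals with $a_1=1$ and $f(a_{n+1})>nf(a_n)$ for all $n$. The naive idea of setting $g_1$ equal to the constant $a_k$ on each block $[a_k,a_{k+1})$ is tempting but fails: at the left endpoint $n=a_k$ one gets $f(n)/f(g_1(n))=1$, so the ratio would diverge only along the sparse subsequence $(a_k)$ rather than along all of $\N$. To force divergence at \emph{every} point I would instead let $g_1$ lag exactly one block behind, namely
$$
g_1(n)=a_{k-1}\quad\text{for } a_k\le n<a_{k+1}\ (k\ge 2),
$$
with $g_1(n)=1$ for $n<a_2$.

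It then remains only to check the two needed properties. First, $g_1$ is non-decreasing (constant on each block, jumping up from $a_{k-1}$ to $a_k$ at $n=a_{k+1}$), tends to infinity since $a_{k-1}\to\infty$, and satisfies $g_1(n)\le a_{k-1}<a_k\le n$, so that $n/g_1(n)\ge 1\nrightarrow 0$; thus $g_1\in\G$. Second, for $n\in[a_k,a_{k+1})$ with $k\ge 2$ the monotonicity of $f$ yields
$$
\frac{f(n)}{f(g_1(n))}=\frac{f(n)}{f(a_{k-1})}\ge\frac{f(a_k)}{f(a_{k-1})}>k-1,
$$
and since $k\to\infty$ as $n\to\infty$ we obtain $f(n)/f(g_1(n))\to\infty$, as required. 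I expect no genuine obstacle: the substantive content already sits inside Corollary \ref{cc1} and Proposition \ref{pnsubset}. The single point that demands care is precisely the one flagged above, namely ensuring that $f(n)/f(g_1(n))$ diverges for \emph{all} $n$ and not merely along $(a_k)$, which is exactly why the one-block lag must be used in place of a block-constant function that tracks $n$.
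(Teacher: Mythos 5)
Your proposal is correct and structurally identical to the paper's proof: the paper likewise obtains $g_2$ verbatim from Proposition \ref{pnsubset}, then builds a block-constant $g_1\in\G$ with $f(n)/f(g_1(n))\to\infty$ and invokes Corollary 2.6 (the corollary to Proposition \ref{result5}) to get $\iZ_{g_1}(f)\subsetneq\iZ(f)$. The only, immaterial, difference is the concrete witness: the paper sets $g_1(n)=k$ on $[n_k,n_{k+1})$ for an infinite set $\{n_1<n_2<\cdots\}\in\iZ(f)$, whereas you lag one block behind a sequence satisfying $f(a_{k+1})>k\,f(a_k)$; your verification (monotonicity, $n/g_1(n)>1$, and the ratio bound $f(n)/f(g_1(n))>k-1$ on the $k$-th block) is sound.
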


\begin{proof}
Let $f$ be an unbounded modulus function. Therefore, from Proposition \ref{pnsubset}, there exists a $g_2\in\G$ such that $\iZ(f) \subsetneq \iZ_{g_2}(f)$.

 As $\iZ(f) \neq Fin$, choose a set $A=\{n_1<n_2<\ldots<n_k<\ldots\}\subseteq\N$ such that $A\in\iZ(f)$ (i.e. $d^f(A)=0$). Consequently we have $\lim\limits_{k\to\infty}\frac{f(k)}{f(n_k)}= 0$. Next define,
$$
 g_1(n)=
 \begin{cases}
 1 & \text{ for } 1\leq n< n_1 \\
 k & \text{ for } \ n_k\leq n <n_{k+1}.
 \end{cases}
$$
It is easy to observe that  $g_1$ is non-decreasing while $\lim\limits_{n\to\infty}\frac{n}{g_1(n)}\nrightarrow 0$ follows from the fact that $\frac{n_k}{g_1(n_k)}=\frac{n_k}{k}>1$ for all $k\in\N$ i.e. $g_1\in\G$. Note that
$$
\lim\limits_{n\to\infty}\frac{f(n)}{f(g(n))}\geq \lim\limits_{k\to\infty}\frac{f(n_k)}{f(k)}=\infty.
$$
Hence from Corollary 2.6, we have $\iZ_{g_1}(f)\subsetneq  \iZ(f)$ and we are done.
\end{proof}

\section{The characterized subgroups for the density function $d^f_g$}

One can naturally think of the following general notion of convergence corresponding to the density function $d^f_g$.

\begin{definition}\label{Def1}
A sequence of real numbers $(x_n)$ is said to converge to a real number $x_0$ $f^g$-statistically if for any $\eps > 0$, $d^{f}_{g}(\{n \in \mathbb{N}: |x_n - x_0| \geq \eps\}) = 0$.
\end{definition}

As a natural consequence we can introduce our main definition of this section.

\begin{definition}
For a sequence of integers $(a_n)$ the subgroup
\begin{equation}\label{def:stat:conv}
t^{f,g}_{(a_n)}(\T) := \{x\in \T: a_nx \to 0\  f^g\mbox{-statistically in }\  \T\}
\end{equation}
of $\T$ is called {\em an $f^g$-statistically characterized} (shortly, {\em an $f^g$-characterized}) $($by $(a_n))$ {\em subgroup} of $\T$.
\end{definition}

Before we proceed to prove our results, it is necessary to recall the following basic facts and definitions in line of \cite{DDB}.

A sequence of positive integers $(a_n)$ is an arithmetic sequence if
$$1 = a_0 < a_1 < a_2 < \dots < a_n < \dots ~~\mbox{and}~a_n|a_{n+1}~ \mbox{for every}~n \in \mathbb{N}.$$
Let $(a_n)$ be an arithmetic sequence.  
 In this case the ratio, defined by $q_n=\frac{a_n}{a_{n-1}}$ for $n>0$ (so that $q_1:=a_1$), is a positive integer. Further we assume without loss of generality that the elements of the sequence are distinct, i.e. $q_n \geq 2$ for all $n\geq 1$. For arithmetic sequences, the fact that any $x\in\T$ can be represented canonically has been used in this sequel time and again. So here we recapitulate the result once.
 \begin{fact}\label{factsupp}
  For any arithmetic sequence $(a_n)$ and $x\in\T$, we can find a unique sequence of integers $\{c_n\}$ where $c_n\in [0,q_n-1]$ such that
 \begin{equation}\label{canonical:repr}
   x=\sum\limits_{n=1}^{\infty}\frac{c_n}{a_n},
 \end{equation}
    and $c_n<q_n-1$ for infinitely many $n$.
    \end{fact}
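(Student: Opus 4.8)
The plan is to realise the claimed expansion as the mixed-radix (Cantor-type) expansion of $x$ with respect to the bases $q_n$, exploiting that for an arithmetic sequence $a_n = \prod_{k=1}^{n} q_k$ and hence $\frac{1}{a_n} = \frac{1}{q_1 q_2 \cdots q_n}$, with the single structural identity $\frac{q_n}{a_n} = \frac{1}{a_{n-1}}$ doing most of the work. First I would establish existence by a greedy algorithm. Fix the representative $\tilde{x}\in[0,1)$ of $x$, put $x_0=\tilde{x}$, and define recursively $c_n = \lfloor a_n x_{n-1}\rfloor$ and $x_n = x_{n-1} - \frac{c_n}{a_n}$. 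An easy induction gives $x_n \in [0, \frac{1}{a_n})$ for all $n$: if $x_{n-1}\in[0,\frac{1}{a_{n-1}})$ then $a_n x_{n-1}\in[0,q_n)$, so $c_n\in\{0,1,\dots,q_n-1\}$ and $x_n = x_{n-1}-\frac{c_n}{a_n}\in[0,\frac{1}{a_n})$. Since the partial sums satisfy $\sum_{k=1}^{n}\frac{c_k}{a_k} = \tilde{x}-x_n$ with $0\le x_n < \frac{1}{a_n}\to 0$, the series converges to $\tilde{x}$, hence to $x$ in $\T$, and each $c_n$ lies in $[0,q_n-1]$ as required.

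Next I would verify the normalisation condition using the telescoping identity
$$
\sum_{n>N}\frac{q_n-1}{a_n} = \sum_{n>N}\Bigl(\frac{1}{a_{n-1}}-\frac{1}{a_n}\Bigr) = \frac{1}{a_N},
$$
which is where $\frac{q_n}{a_n}=\frac{1}{a_{n-1}}$ enters. If the greedy coefficients satisfied $c_n = q_n-1$ for all $n>N$, then $x_N = \sum_{n>N}\frac{q_n-1}{a_n} = \frac{1}{a_N}$, contradicting the strict bound $x_N < \frac{1}{a_N}$ obtained above. Hence $c_n < q_n - 1$ holds for infinitely many $n$, so the greedy expansion is automatically admissible.

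Finally, for uniqueness I would take two admissible expansions of the same $x$, with coefficient sequences $(c_n)$ and $(c_n')$, and let $N$ be the least index at which they differ, say $c_N < c_N'$. Subtracting the two series yields
$$
\frac{c_N'-c_N}{a_N} = \sum_{n>N}\frac{c_n-c_n'}{a_n}.
$$
The left-hand side is at least $\frac{1}{a_N}$, while from $c_n - c_n' \le q_n-1$ and the telescoping identity the right-hand side is at most $\frac{1}{a_N}$, with equality forcing $c_n = q_n-1$ (and $c_n'=0$) for every $n>N$. Since $(c_n)$ satisfies the infinitely-often condition, this equality is impossible, so the two sequences coincide. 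The main obstacle is exactly this uniqueness/normalisation step: the admissibility constraint that $c_n<q_n-1$ infinitely often is precisely what excludes the carry ambiguity (the analogue of $0.999\ldots = 1.000\ldots$), and the telescoping identity $\sum_{n>N}\frac{q_n-1}{a_n}=\frac{1}{a_N}$ is the computation that pins it down; the existence half is routine once the bases are identified as $q_n$.
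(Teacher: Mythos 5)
Your proof is correct, and there is nothing in the paper to compare it against: the paper states this as a bare Fact, recapitulated from the standard theory of canonical (mixed-radix) representations for arithmetic sequences (cf.\ \cite{DI1}), with no proof given. Your argument -- the greedy construction with the invariant $x_n\in[0,\frac{1}{a_n})$, the telescoping identity $\sum_{n>N}\frac{q_n-1}{a_n}=\frac{1}{a_N}$ to rule out the tail $c_n=q_n-1$ eventually, and the least-differing-index comparison for uniqueness -- is precisely the standard proof the paper implicitly relies on, and all three steps check out. One sentence is worth adding to the uniqueness step: two admissible expansions of the same $x\in\T$ a priori agree only modulo $1$, so before subtracting the series you should observe that every admissible sum lies in $[0,1)$ -- it is nonnegative, bounded above by $\sum_{n\geq 1}\frac{q_n-1}{a_n}=\frac{1}{a_0}=1$, and the bound is strict because $c_n<q_n-1$ holds for at least one $n$ -- whence equality in $\T$ upgrades to equality in $\R$ and your term-by-term subtraction is legitimate. (A cosmetic remark: the paper's convention defines $\lfloor x\rfloor$ as the greatest integer \emph{strictly} less than $x$; your greedy step uses the usual floor, which is the right choice here, so you may want to say explicitly that your $\lfloor\cdot\rfloor$ is the standard one.)
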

 We define the support of $x$ by
$$
supp_{(a_n)}(x) = \{n\in \mathbb{N}: c_n \neq 0\}.
$$
When no confusion is possible, we simply write $supp(x)$.

\begin{theorem}
 For any sequence of integers $(a_n)$, $t^{f,g}_{(a_n)}(\T)$ is an $F_{\sigma\delta}$ (hence, Borel) subgroup of $\T$ containing $t_{(a_n)}(\T)$.
 \end{theorem}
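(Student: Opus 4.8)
The plan is to prove three things in turn: that $t^{f,g}_{(a_n)}(\T)$ is a subgroup, that it contains $t_{(a_n)}(\T)$, and that it is $F_{\sigma\delta}$. Throughout I write $\|y\|$ for the distance of $y\in\T$ to $0$ (so $\|y\|=\min\{\{y\},1-\{y\}\}$), which is symmetric and satisfies $\|y+z\|\le\|y\|+\|z\|$, and convergence in $\T$ means $\|\cdot\|\to 0$. The algebraic statements are routine consequences of the fact that $\iZ_g(f)$ is an ideal containing $Fin$. For the subgroup property, if $x,y\in t^{f,g}_{(a_n)}(\T)$ and $\eps>0$, the inclusion
$$\{n:\|a_n(x+y)\|\ge\eps\}\subseteq\{n:\|a_nx\|\ge\eps/2\}\cup\{n:\|a_ny\|\ge\eps/2\}$$
together with closure of $\iZ_g(f)$ under subsets and finite unions gives $x+y\in t^{f,g}_{(a_n)}(\T)$, while $\|{-}a_nx\|=\|a_nx\|$ gives closure under inversion and $0$ is clearly a member. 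For the containment, if $x\in t_{(a_n)}(\T)$ then $\{n:\|a_nx\|\ge\eps\}$ is finite for every $\eps>0$, hence lies in $Fin\subseteq\iZ_g(f)$, so $d^f_g(\{n:\|a_nx\|\ge\eps\})=0$ and $x\in t^{f,g}_{(a_n)}(\T)$.

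For the descriptive-set-theoretic part I first reduce the defining condition to a countable one. Since $\iZ_g(f)$ is downward closed, for every $\eps>0$ choosing $j$ with $1/j\le\eps$ gives $\{n:\|a_nx\|\ge\eps\}\subseteq A_j(x)$, where $A_j(x):=\{n:\|a_nx\|\ge 1/j\}$, so
$$t^{f,g}_{(a_n)}(\T)=\bigcap_{j\ge1}\{x:d^f_g(A_j(x))=0\}.$$
Because $\frac{f(|A\cap[1,n]|)}{f(g(n))}\ge0$, the condition $d^f_g(A_j(x))=0$ is exactly $\lim_n\phi^{(j)}_n(x)=0$, where $\phi^{(j)}_n(x):=\frac{f(S^{(j)}_n(x))}{f(g(n))}$ and $S^{(j)}_n(x):=\sum_{m=1}^n\mathbf 1[\|a_mx\|\ge1/j]$ (only $n$ large enough that $f(g(n))>0$ are relevant). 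Expanding $\lim_n\phi^{(j)}_n(x)=0$ as $\bigcap_k\bigcup_N\bigcap_{n\ge N}\{x:\phi^{(j)}_n(x)\le1/k\}$ reduces everything to the topological nature of the sets $\{x:\phi^{(j)}_n(x)\le1/k\}$.

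Here lies the one real subtlety, and this is where I expect the main work. With the closed condition $\|a_mx\|\ge1/j$ the counting function $S^{(j)}_n$ is only upper semicontinuous, so each $\{x:\phi^{(j)}_n(x)\le1/k\}$ is open, and the naive bookkeeping yields only the coarser class $G_{\delta\sigma\delta}$. The remedy is to pass to the strict inequality: set $\widetilde A_j(x):=\{n:\|a_nx\|>1/j\}$ and $\widetilde S^{(j)}_n(x):=\sum_{m=1}^n\mathbf 1[\|a_mx\|>1/j]$. Because $A_j(x)\subseteq\widetilde A_{j+1}(x)\subseteq A_{j+1}(x)$ and $\iZ_g(f)$ is an ideal, the families $(A_j)$ and $(\widetilde A_j)$ are cofinally interleaved, so $\bigcap_j\{x:d^f_g(A_j(x))=0\}=\bigcap_j\{x:d^f_g(\widetilde A_j(x))=0\}$, and one may replace every $S^{(j)}_n$ by $\widetilde S^{(j)}_n$ without changing the group. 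Now each $\mathbf 1[\|a_mx\|>1/j]$ is the indicator of an open set, hence lower semicontinuous, so $\widetilde S^{(j)}_n$ is lower semicontinuous and integer valued; since $f$ is non-decreasing, $\{x:\tfrac{f(\widetilde S^{(j)}_n(x))}{f(g(n))}\le1/k\}=\{x:\widetilde S^{(j)}_n(x)\le R_{j,k,n}\}$ for the integer threshold $R_{j,k,n}=\max\{s:f(s)\le f(g(n))/k\}$, and a sublevel set of a lower semicontinuous function is closed.

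With closed building blocks the count is clean: $\bigcap_{n\ge N}\{x:\widetilde S^{(j)}_n(x)\le R_{j,k,n}\}$ is closed, the union over $N$ is $F_\sigma$, and intersecting over $k$ and $j$ stays in $F_{\sigma\delta}$, so
$$t^{f,g}_{(a_n)}(\T)=\bigcap_{j,k\ge1}\ \bigcup_{N\ge1}\ \bigcap_{n\ge N}\{x:\widetilde S^{(j)}_n(x)\le R_{j,k,n}\}$$
is $F_{\sigma\delta}$, hence Borel. The main obstacle is precisely the semicontinuity issue just described: obtaining the sharp class $F_{\sigma\delta}$ rather than a higher Borel class hinges on the passage from $\ge1/j$ to $>1/j$ and on verifying, through the ideal structure of $\iZ_g(f)$, that this passage leaves the group unchanged.
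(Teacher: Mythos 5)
Your proof is correct and takes essentially the same route as the paper: the paper's sketch uses the identical decomposition $t^{f,g}_{(a_n)}(\T)=\bigcap_{k}\bigcap_{j}\bigcup_{m}\bigcap_{n\geq m}V_{k,j,n}$, with $U_{n,k}=\{x\in\T:\|a_nx\|>\frac{1}{k}\}$ already defined via the \emph{strict} inequality, so the closedness of the building blocks that you derive from lower semicontinuity of the counting function is exactly what the paper asserts without detail. Your explicit treatment of the interleaving between the sets $\{n:\|a_nx\|\geq\eps\}$ and $\{n:\|a_nx\|>1/k\}$, and of the integer-threshold reformulation using monotonicity of $f$, merely fills in steps the paper delegates to the proof of Theorem A of \cite{DDB}.
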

\begin{proof}
As the proof follows the same line of arguments as Theorem A \cite{DDB}, we only provide a brief sketch. It is easy to check that $t^{f,g}_{(a_n)}(\T)$ is a subgroup of the circle group $\T$. Let us set $U_{n,k}:= \left\{x\in \T:  \|a_nx\| > \frac{1}{k}\right\}$ for $n,k \in \N$. From Definition 3.2, one can write
\begin{eqnarray*}
t^{f,g}_{(a_n)}(\T) &=&\{x\in\T: (\forall k\in\N) \ d^{f}_{g}(\{n:  x\in U_{n,k}\})=0\} =  \bigcap\limits_{k=1}^{\infty}\left\{x\in\T: d^{f}_{g}(\{n: x\in U_{n,k}\})=0\right\}\\
&=&\bigcap\limits_{k=1}^{\infty}\left\{x\in\T: \lim\limits_{m\to\infty}\frac{f(|\{i\in\N:x\in U_{i,k}\}\cap [1,m]|)}{f(g(m))}=0\right\}\\
&=&\bigcap\limits_{k=1}^{\infty}\left\{x\in\T: (\forall j\in\N)(\exists m\in\N) ~\mbox{such that}~ \frac{f(|\{i\in\N:x\in U_{i,k}\}\cap [1,n]|)}{f(g(n))}\leq \frac{1}{j}~
~\mbox{ for all }~ n\geq m\right\}.
\end{eqnarray*}
Subsequently writing
$$
V_{k,j, n} =\left\{x\in\T: \frac{f(|\{i\in\N:x\in U_{i,k} \}\cap [1,n]|)}{f(g(n))}\leq \frac{1}{j}\right\}
$$
one can show that $V_{k,j, n}$ is closed in $\T$ for every  fixed triple $k, j$ and $n$. The assertion then follows from the equality
$$
t^{f,g}_{(a_n)}(\T) =\bigcap\limits_{k=1}^{\infty}\bigcap\limits_{j=1}^{\infty}\bigcup\limits_{m=1}^{\infty}\bigcap\limits_{n\geq m}
V_{k,j, n}.
$$
\end{proof}
Clearly the non-triviality of the newly obtained subgroups $t^{f,g}_{(a_n)}(\T)$ depends on
 \begin{itemize}
\item[(i)] whether $t^{f,g}_{(a_n)}(\T)$ actually becomes the whole circle group $\T$ and
\item[(ii)]  whether as subgroups of $\T$, they are really `new' compared to the already studied characterized subgroups $t_{(a_n)}(\T)$ or their versions $t^s_{(a_n)}(\T)$ and $t^\alpha_{(a_n)}(\T)$.
\end{itemize}
 The study of the first question (i) is easy, as it is known that $t_{(a_n)}(\T)= \T$ precisely when $a_n=0$ for almost all $n$ \cite{BDMW,DPS}.
 Using this fact one can conclude that $t^{f,g}_{(a_n)}(\T)= \T$ precisely when $d^f_g(\{n:a_n\neq 0\})=0$. Since no arithmetic sequence $(a_n)$ satisfies $d^f_g(\{n:a_n\neq 0\})=0$, we deduce that $t^{f,g}_{(a_n)}(\T)\ne \T$
for such sequences.

The second question (ii) is far more complicated and seems worth studying. We thoroughly investigate this problem for general arithmetic sequences.

As the general case seem quite complicated, so as in \cite{DDB} we begin with a special case providing a basic example considering the sequence $(2^n)$ and then step-by step, generalize the idea.

\vspace{0.32cm}
\subsection{The $f^g$-characterized subgroup for the sequence $\mathbf{a_n=2^n}$.}

\vspace{0.32cm}
Note that $t_{(2^n)}(\T)$ is simply the Pr\" ufer group $\Z(2^\infty)$. So it remains only to check that $t^{f,g}_{(2^n)}(\T)$ contains an element $x$ that does not belong to $ \Z(2^\infty)$. It is known that $x\in  \Z(2^\infty)$ precisely when $supp(x)$ is finite (see \cite{DI1}). Note that, for $a_n=2^n$, $c_n$ can only be $0$ or $1$. Below we take $f(x)=\log (1+x)$, $g(n)=n^{\frac{1}{2}}$ and construct an element which belongs to $t^{f,g}_{(2^n)}(\T)\setminus t_{(2^n)}(\T)$.
\begin{example}\label{example01}

Choose $x\in \T$ with
\begin{equation}\label{def:supp2^n}
\supp_{(2^n)}(x) = \bigcup\limits_{n=1}^{\infty} [(2n-1)^{(2n-1)}, (2n)^{(2n)}].
\end{equation}
 We  will show that $x\in t^{f,g}_{(2^n)}(\T) \setminus t_{(2^n)}(\T)$. To check that $x \in  t^{f,g}_{(2^n)}(\T)$, pick an $m \in \N$ and define a subset $A $ of $\N$ as follows:

\medskip

First let  $ B_n := [(2n-1)^{(2n-1)}, (2n)^{(2n)}] $. Clearly length of $B_n$ diverges to $ \infty $. Consequently one can  choose  $n_0 \in \N$  such that  $(2n_0)^{(2n_0)} - (2n_0-1)^{(2n_0-1)} > m$. Now let
$$
A_0 := [(2n_0)^{2n_0}-m, (2n_0)^{2n_0}]\mbox{ and }A_0' := [(2n_0+1)^{(2n_0+1)}-m, (2n_0+1)^{(2n_0+1)}].
$$
Similarly, let

$$
A_k := [(2(n_0+k))^{(2(n_0+k))}-m, (2(n_0+k))^{(2(n_0+k))}]$$  and $$ A_k' := [(2(n_0+k)+1)^{(2(n_0+k)+1)}-m, (2(n_0+k)+1)^{(2(n_0+k)+1)}].
$$

Finally, put  $B = \displaystyle{\bigcup\limits_{k=0}^{\infty}} (A_k\cup A_k')$ and $A=B\cup [1,(2n_0-1)^{(2n_0-1)}]$.
Note that $|A_k| = |A_k'| = m+1 $,  and so
\begin{eqnarray*}
\overline{d}^f_g(A)&=&\limsup\limits_{n\rightarrow\infty}\frac{f(|A\cap [1,n]|)}{f(g(n))}\\
&=&\max  \left\{\lim\limits_{k\rightarrow\infty}\frac{f(2(m+1)(k+1))}{f((2(n_0+k)+1)^{\frac{1}{2}\cdot (2(n_0+k)+1)})},\lim\limits_{k\rightarrow\infty}\frac{f(2(m+1)(k+1)-(m+1))}{f((2(n_0+k))^{\frac{1}{2}\cdot (2(n_0+k))})}\right\}\\
&=& \max  \left\{\lim\limits_{k\rightarrow\infty}\frac{\log (1+2(m+1)(k+1))}{\log (1+(2(n_0+k)+1)^{\frac{1}{2}\cdot (2(n_0+k)+1)})},\lim\limits_{k\rightarrow\infty}\frac{\log (1+(2(m+1)(k+1)-m))}{\log (1+(2(n_0+k))^{\frac{1}{2}\cdot (2(n_0+k))})}\right\} \ = 0 .
\end{eqnarray*}
We claim that $\|2^nx\| < 1/2^m$ for all $n \in \N \setminus A$.
 As $n \in \N \setminus A$, by the choice of $A$ and the definition of $B_n := [(2n-1)^{(2n-1)}, (2n)^{(2n)}]$, we can deduce that

 \begin{itemize}
   \item[(a)] either $n\in [(2r)^{(2r)}+1,(2r+1)^{(2r+1)}-m-1]$ for some $r\in \N$ which automatically implies that $n+1, n+2, \ldots , n+m \in [(2r)^{(2r)}+1,(2r+1)^{(2r+1)}-1]$ i.e. $n+1, n+2, \ldots , n+m\not\in \supp_{(2^n)}(x)$, or
   \item[(b)] $n\in [(2r+1)^{(2r+1)}+1,(2(r+1))^{(2(r+1))}-m-1]$ for some $r\in \N$ i.e. $n+1, n+2, \ldots , n+m \in [(2r+1)^{(2r+1)},(2(r+1))^{(2(r+1))}] \subset \supp_{(2^n)}(x)$.
 \end{itemize}

 In both cases we have $c_{n+1} =c_{n+2} = \ldots =c_{n+m}$. In case (a) this leads to $c_{n+1} =c_{n+2} = \ldots =c_{n+m}=0$ which implies
 $$
 2^nx = \frac{c_{n+1} }{2} + \frac{c_{n+2} }{2^2} + \ldots + \frac{c_{n+m} }{2^m} + \frac{c_{n+m+ 1} }{2^{m+1}} + \ldots =
  \frac{c_{n+m+ 1} }{2^{m+1}} + \ldots.
 $$
 Therefore $\|2^nx\| < 1/2^m$.
 In case (b) this leads to $c_{n+1} =c_{n+2} = \ldots =c_{n+m}=1$,  and subsequently
 $$
 2^nx = \frac{1}{2} + \frac{1}{2^2} + \ldots + \frac{1}{2^m} + \frac{c_{n+m+ 1} }{2^{m+1}} + \ldots =
1 - \frac{1}{2^m}+ \frac{c_{n+m+ 1} }{2^{m+1}} + \ldots.
$$
  As a result we can again conclude that $\|2^nx\| < 1/2^m$. Since $m\in \N$ was chosen arbitrarily and $\N \setminus A \in \iZ_g^*(f)$, we obtain that $(2^nx)$ $f^g$-statistically converges to 0 in $\T$ i.e. $x\in t^{f,g}_{(2^n)}(\T)$.
 According to \cite{DI1}, $x\notin t_{(2^n)}(\T)$ as $supp(x)$ is infinite.
\end{example}

However, we can actually prove that the newly obtained subgroup $t^{f,g}_{(2^n)}(\T)$ contains uncountably more elements compared to $t_{(2^n)}(\T)$ as had been observed for $t^s_{(2^n)}(\T)$ (Proposition 3.5 \cite{DDB}). We prove that in Proposition \ref{cor2}.

  Now we are in a position to see that the element $x\in \T$ in Example \ref{example01} can be replaced by a more generally defined element of $\T$ without any restriction on $f$ or $g$. To explain the choice, we note that for every $x$ as in (\ref{def:supp2^n}) such that $x\not \in \Z(2^\infty)$, the support can be presented as a disjoint union of infinitely many consecutive intervals $\displaystyle{\bigcup_n} B_n$. Let us define
\begin{equation}\label{II}
\mathbb{I}^f_g=\left\{\bigcup\limits_{r=1}^{\infty}B_r:    B_r=[n_{(2r-1)},n_{(2r)}] \mbox{, for some } A=\{n_r \}_{r\in \N} \subset \N \mbox{ with } d^f_g(A) \ =0 \right\}.
\end{equation}
 In  Example \ref{example01} we used the following specific member of $\mathbb{I}^f_g$
\begin{equation}\label{EqJune26}
 B = \bigcup\limits_{r=1}^{\infty}B_r \in \mathbb{I}^f_g, \mbox{ with }B_r := [(2r-1)^{(2r-1)}, (2r)^{(2r)}].
\end{equation}

 Now we intend to show that $\mathbb{I}^f_g \not \subseteq \iZ_g(f)$. If possible let us assume that $\mathbb{I}^f_g \subseteq \iZ_g(f)$ for some unbounded  modulus function $f$ and $g\in \G$. Note that for any unbounded modulus function $f$ and for any $g\in\G$, we have $\iZ_g(f)\neq Fin$. Therefore, we can choose $A=\{n_1 < n_2 < n_3 < \dots\} \subset \N $ such that $I_A=\bigcup\limits_{r=1}^{\infty}B_r \in \mathbb{I}^f_g \subseteq \iZ_g(f)$, where $B_r=[n_{(2r-1)},n_{(2r)}]$. Then, for $A'= (n_{r+1}) \subseteq A$, we have $I_{A'}=\bigcup\limits_{r=1}^{\infty}{B'}_r \in \mathbb{I}^f_g \subseteq \iZ_g(f)$, where ${B'}_r=[n_{(2r)},n_{(2r+1)}]$. But this implies that $\N \in \iZ_g(f)$ which is a contradiction. Our next observation is a result concerning both  $\mathbb{I}^f_g$ and $\iZ_g(f)$ in line of (Lemma 3.3 \cite{DDB}) that will be frequently used in the sequel.

\begin{lemma}\label{cor1}
$|\mathbb{I}^f_g|= |\iZ_g(f)|=\mathfrak c$.
\end{lemma}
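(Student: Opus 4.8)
The plan is to sandwich both cardinalities between $\mathfrak c$ from above and below. For the upper bound observe that $\mathbb{I}^f_g$ and $\iZ_g(f)$ are both subfamilies of $\mathcal{P}(\N)$, so each has cardinality at most $|\mathcal{P}(\N)|=\mathfrak c$; it therefore remains only to produce $\mathfrak c$ many distinct members of each.

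For $\iZ_g(f)$ the lower bound is immediate. By the discussion following Lemma \ref{fgnk} there is an infinite set $A_0\in\iZ_g(f)\setminus Fin$, and since $\iZ_g(f)$ is an ideal it is closed downwards, whence $\mathcal{P}(A_0)\subseteq\iZ_g(f)$. As $A_0$ is infinite, $|\mathcal{P}(A_0)|=2^{\aleph_0}=\mathfrak c$, and so $|\iZ_g(f)|=\mathfrak c$.

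For $\mathbb{I}^f_g$ I would reuse the same infinite $A_0=\{m_1<m_2<m_3<\cdots\}\in\iZ_g(f)$, grouping its elements into consecutive triples $(m_{3r-2},m_{3r-1},m_{3r})$. To each binary string $\varepsilon=(\varepsilon_r)\in\{0,1\}^{\N}$ I associate
$$
A_\varepsilon=\{m_{3r-2}:r\in\N\}\cup\{m_{3r-1}:\varepsilon_r=0\}\cup\{m_{3r}:\varepsilon_r=1\},
$$
which picks exactly two elements, namely $m_{3r-2}$ together with one of $m_{3r-1},m_{3r}$, out of each triple. Since $A_\varepsilon\subseteq A_0\in\iZ_g(f)$ and the ideal is downward closed we get $d^f_g(A_\varepsilon)=0$; listing $A_\varepsilon$ in increasing order, its $r$-th pair of consecutive entries is $(m_{3r-2},m_{3r-1})$ when $\varepsilon_r=0$ and $(m_{3r-2},m_{3r})$ when $\varepsilon_r=1$, so $A_\varepsilon$ genuinely witnesses a member
$$
I_{A_\varepsilon}=\bigcup_{r=1}^{\infty}\bigl[m_{3r-2},\,m_{3r-1+\varepsilon_r}\bigr]\in\mathbb{I}^f_g .
$$
The map $\varepsilon\mapsto I_{A_\varepsilon}$ is then injective, because $m_{3r}\in I_{A_\varepsilon}$ if and only if $\varepsilon_r=1$: the point $m_{3r}$ lies in the $r$-th interval precisely when that interval reaches $m_{3r}$, and it lies in no other interval since the triples are pairwise disjoint and strictly increasing (intervals for indices $s<r$ terminate below $m_{3r}$ and those for $s>r$ begin above it). Consequently $|\mathbb{I}^f_g|\geq|\{0,1\}^{\N}|=\mathfrak c$, which with the upper bound yields $|\mathbb{I}^f_g|=\mathfrak c$.

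The only delicate step is this last injectivity verification, i.e. recovering $\varepsilon$ from $I_{A_\varepsilon}$; it rests entirely on the strict ordering and disjointness of the triples, which let the single point $m_{3r}$ pin down the coordinate $\varepsilon_r$. All remaining ingredients — the trivial upper bound, the existence of the infinite set $A_0$, and the downward closure of $\iZ_g(f)$ — are routine, so I do not expect any serious obstacle there.
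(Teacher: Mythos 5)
Your proof is correct, and its engine is the same one the paper uses: an injection of $\{0,1\}^{\N}$ into $\mathbb{I}^f_g$ given by one binary choice per stage, together with the (implicit) upper bound $|\mathcal{P}(\N)|=\mathfrak c$. The scaffolding differs in two ways, both slightly to your advantage. For $\mathbb{I}^f_g$, the paper fixes a member $B=\bigcup_{r}B_r\in\mathbb{I}^f_g$ (``e.g., as in (\ref{EqJune26})'') and sends $\xi=(z_k)$ to $B^{\xi}=\bigcup_{k}B_{2k+z_k}$, i.e.\ it selects whole intervals from a pre-given decomposition; you instead manufacture the intervals from an arbitrary infinite $A_0\in\iZ_g(f)$ (whose existence is the paper's remark after Lemma \ref{fgnk}), letting $\varepsilon_r$ choose the right endpoint out of each triple. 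This makes your argument more self-contained: the specific set in (\ref{EqJune26}) need not belong to $\mathbb{I}^f_g$ for every admissible pair $(f,g)$ --- for a very slowly growing modulus such as $f(x)=\log(1+\log(1+x))$ and $g(n)=\log(1+n)$, the endpoint set $\{r^r: r\in\N\}$ fails to be $d^f_g$-null --- so the paper's ``e.g.'' must anyway be read as ``any member of $\mathbb{I}^f_g$, which exists because $\iZ_g(f)\supsetneq Fin$'', which is exactly the input you invoke explicitly. For $|\iZ_g(f)|=\mathfrak c$ you replace the paper's ``a similar proof works'' by the cleaner observation that downward closure gives $\mathcal{P}(A_0)\subseteq\iZ_g(f)$. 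Your injectivity check --- $m_{3r}\in I_{A_\varepsilon}$ if and only if $\varepsilon_r=1$, with the strict ordering of the triples ruling out interference from the other intervals --- is sound, and it remains valid even if consecutive intervals happen to be adjacent, since it tests membership of a single point rather than comparing interval decompositions.
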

\begin{proof}

Fix a specific member $B = \bigcup\limits_{r=1}^{\infty}B_r \in \mathbb{I}^f_g$, e.g., as in (\ref{EqJune26}).  Fix a sequence $\xi = (z_i)\in \{0,1\}^\N$ and define $B^\xi = \bigcup\limits_{k=1}^{\infty}B_{2k+ z_k}$. In other words, this subset $B^\xi $ of $B$ is obtained by taking at each stage $k$ either  $B_{2k}$ of $B_{2k+1}$ depending on the choice imposed by $\xi$. As obviously $B^\xi \ne B^\eta$
for distinct $\xi, \eta \in \{0,1\}^\N$, this provides an injective map given by
$$
\{0,1\}^\N \ni \xi \to B^\xi \in \mathbb{I}^f_g,
$$
Since $|\{0,1\}^\N| = \mathfrak c$, we are done.

  A similar proof works for $\iZ_g(f)$.
\end{proof}

Let us note that the element $x\in \T$ in Example \ref{example01} has the property $supp(x)\in \mathbb{I}^f_g$. Now we see that the argument works with any element $x$ of $\T$ with $supp(x)\in \mathbb{I}^f_g$ where $f$ is an unbounded modulus function and $g \in \G$.

\begin{lemma}\label{lemma1}
Let $x\in\T$ be such that $supp_{(2^n)}(x)\in \mathbb{I}^f_g$. Then $x\in t^{f,g}_{(2^n)}(\T)\setminus t_{(2^n)}(\T)$.
\end{lemma}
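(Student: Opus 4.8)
The plan is to follow the template of Example~\ref{example01}, replacing the explicit density computation carried out there by the abstract hypothesis $d^f_g(A)=0$. By the definition~(\ref{II}) of $\mathbb{I}^f_g$, first write $\supp_{(2^n)}(x)=\bigcup_{r=1}^{\infty}B_r$ with $B_r=[n_{2r-1},n_{2r}]$ for some $A=\{n_r\}_{r\in\N}\subset\N$ satisfying $d^f_g(A)=0$. The membership $x\notin t_{(2^n)}(\T)$ is then immediate: since $A$ is infinite, the union $\bigcup_r B_r$ consists of infinitely many nonempty blocks, so $\supp_{(2^n)}(x)$ is infinite and hence $x\notin\Z(2^\infty)=t_{(2^n)}(\T)$ by \cite{DI1}.

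For the inclusion $x\in t^{f,g}_{(2^n)}(\T)$, I would fix $\eps>0$, choose $m\in\N$ with $2^{-m}\le\eps$, and reduce, via Definition~\ref{Def1}, to proving that $E_m:=\{n\in\N:\|2^nx\|\ge 2^{-m}\}$ lies in $\iZ_g(f)$; indeed $\{n:\|2^nx\|\ge\eps\}\subseteq E_m$ and $\iZ_g(f)$ is an ideal. Here I would invoke the local computation underlying Example~\ref{example01}: since $c_j\in\{0,1\}$ and $\{2^nx\}=\sum_{j>n}c_j2^{-(j-n)}$, whenever $c_{n+1}=\cdots=c_{n+m}$ (all $0$ or all $1$) one gets $\|2^nx\|<2^{-m}$. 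Thus $n\in E_m$ forces the block $c_{n+1},\dots,c_{n+m}$ to be non-constant, i.e. the window $[n+1,n+m]$ must meet an endpoint of some $B_r$; as all these endpoints are elements of $A$, this yields $E_m\subseteq\bigcup_{a\in A}[a-m,a-1]$.

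The genuine obstacle — and the step I would treat most carefully — is to deduce $\overline{d}^f_g(E_m)=0$ from $\overline{d}^f_g(A)=0$, that is, to see that a bounded thickening of a $d^f_g$-null set is again null, even though the denominator $f(g(n))$ is allowed to jump between $n$ and $n+m$. The key observation is that the spillover is controlled by a constant depending only on $m$: each $a\in A$ with $a\le N$ contributes at most $m$ points to $E_m\cap[1,N]$, while those $a\in(N,N+m]$ also contribute at most $m$ points each but number at most $m$, so
\[
|E_m\cap[1,N]|\le m\,|A\cap[1,N]|+m^2.
\]
Applying the non-decreasing, subadditive $f$ (so that $f(ka)\le kf(a)$ for $k\in\N$ and $f(a+b)\le f(a)+f(b)$) and dividing by $f(g(N))$ gives
\[
\frac{f(|E_m\cap[1,N]|)}{f(g(N))}\le m\,\frac{f(|A\cap[1,N]|)}{f(g(N))}+\frac{f(m^2)}{f(g(N))}.
\]
Taking $\limsup_{N\to\infty}$, the first term tends to $m\,\overline{d}^f_g(A)=0$ and the second to $0$ since $f(g(N))\to\infty$, whence $E_m\in\iZ_g(f)$. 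As $\eps>0$ was arbitrary, $2^nx\to0$ $f^g$-statistically, i.e. $x\in t^{f,g}_{(2^n)}(\T)$, completing the proof. It is precisely the trivial bound $|A\cap(N,N+m]|\le m$ that keeps the mismatch between $f(g(N))$ and $f(g(N+m))$ from ever entering the estimate, which is why no regularity on $g$ beyond $g\in\G$ is needed.
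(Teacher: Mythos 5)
Your proof is correct and follows essentially the same route as the paper's: both reduce membership in $t^{f,g}_{(2^n)}(\T)$ to the constant-digit-window computation of Example~\ref{example01} together with the observation that a bounded (width-$m$) thickening of the null endpoint set $A$ remains $d^f_g$-null, with subadditivity of $f$ doing the work. The only difference is bookkeeping: the paper covers the exceptional set by the $m+1$ shifted copies $A_i=\{n_r-i\}\cap\N$, shows each is null via $f(|A_0\cap[1,n]|+i)\leq f(|A_0\cap[1,n]|)+f(i)$, and invokes closure of the ideal $\iZ_g(f)$ under finite unions, whereas you fold the same mechanism into the single counting estimate $|E_m\cap[1,N]|\leq m\,|A\cap[1,N]|+m^2$ combined with $f(ka)\leq kf(a)$.
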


 \begin{proof} The fact that $x\notin t_{(2^n)}(\T)$ follows from the fact that $supp(x)\in\mathbb{I}^f_g$ implies $supp(x)$ is infinite.

 We take $\supp_{(2^n)}(x) = \bigcup\limits_{r=1}^{\infty} [n_{(2r-1)},n_{(2r)}]$, where $A'=(n_r) \in\iZ_g(f)$.
  Let us define $B_r=[n_{(2r-1)},n_{(2r)}]$ and $G_r := [n_{(2r)}+1,n_{(2r+1)}-1]$. Consider any $m \in \N$. We choose $r_0\in \N$ such that $n_{r_0} > m$ . Now we define, $A_0 =\{ n_r :\ r \in \N$ and $ r \geq r_0 \}$ and $A_i=\{ n_r -i :\ r \in \N$ and $ r \geq r_0 \}\cap\N$.
  Consequently
  \begin{eqnarray*}
  d^f_g(A_i) &=& \lim\limits_{n\rightarrow\infty} \frac{f(|(A_0-i)\cap [1,n]|)}{f(g(n))}\\
   &\leq& \lim\limits_{n\rightarrow\infty} \frac{f(|A_0 \cap [1,n]|+i)}{f(g(n))} \leq \ d^f_g(A')+ \lim\limits_{n\rightarrow\infty} \frac{f(i)}{f(g(n))} \ = 0
  \end{eqnarray*}
 Finally put $A= \bigcup\limits_{i=0}^{m} A_i \cup [1, n_{r_0}]$. We can then show that this $A$ witnesses the needed $f^g$-statistical convergence with respect to $\eps=1/2^m$ following the line of the proof of Example \ref{example01}.
 \end{proof}

Immediately we have the following result.
\begin{proposition}\label{cor2}
$|t^{f,g}_{(2^n)}(\T)\setminus t_{(2^n)}(\T)|=\mathfrak{c}$.
\end{proposition}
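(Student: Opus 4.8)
The plan is to combine Lemma~\ref{lemma1} with the cardinality count of Lemma~\ref{cor1}. Since Lemma~\ref{lemma1} guarantees that \emph{every} $x\in\T$ whose support lies in $\mathbb{I}^f_g$ already belongs to the difference set $t^{f,g}_{(2^n)}(\T)\setminus t_{(2^n)}(\T)$, it suffices to produce $\mathfrak c$ many points of $\T$ with pairwise distinct supports drawn from $\mathbb{I}^f_g$. Together with the trivial upper bound $|t^{f,g}_{(2^n)}(\T)\setminus t_{(2^n)}(\T)|\le|\T|=\mathfrak c$, the two inequalities will force equality.

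First I would recall the explicit family $\{B^\xi:\xi\in\{0,1\}^\N\}\subseteq\mathbb{I}^f_g$ of size $\mathfrak c$ constructed in the proof of Lemma~\ref{cor1}, where $B^\xi=\bigcup_k B_{2k+z_k}$ is assembled from the fixed member $B=\bigcup_r B_r$ of (\ref{EqJune26}). Each $B^\xi$ is a subset of $B$, hence co-infinite, since the consecutive blocks $B_r=[(2r-1)^{(2r-1)},(2r)^{(2r)}]$ are separated by nonempty gaps. For each $\xi$ I would then set
$$
x_\xi=\sum_{n\in B^\xi}\frac{1}{2^n}\in\T.
$$
Because $a_n=2^n$ forces every canonical coefficient to satisfy $c_n\in\{0,1\}$, and the co-infiniteness of $B^\xi$ ensures $c_n=0$ (i.e.\ $c_n<q_n-1$) for infinitely many $n$, Fact~\ref{factsupp} applies and gives $\supp_{(2^n)}(x_\xi)=B^\xi\in\mathbb{I}^f_g$. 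The uniqueness clause of Fact~\ref{factsupp} makes distinct supports yield distinct points, so $\xi\mapsto x_\xi$ is injective, and Lemma~\ref{lemma1} places each $x_\xi$ in $t^{f,g}_{(2^n)}(\T)\setminus t_{(2^n)}(\T)$.

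Putting these together, the map $\{0,1\}^\N\ni\xi\mapsto x_\xi$ injects a set of cardinality $\mathfrak c$ into $t^{f,g}_{(2^n)}(\T)\setminus t_{(2^n)}(\T)$, giving the lower bound $|t^{f,g}_{(2^n)}(\T)\setminus t_{(2^n)}(\T)|\ge\mathfrak c$, while the reverse inequality is immediate. The one point needing care, and the step I regard as the main (if modest) obstacle, is verifying that the chosen supports are genuinely co-infinite, so that they really are the supports of honest canonical representations rather than being ruled out by the normalization condition in Fact~\ref{factsupp}; this is exactly why I would draw the family from subsets of the sparse fixed block $B$ of (\ref{EqJune26}) rather than from an arbitrary member of $\mathbb{I}^f_g$.
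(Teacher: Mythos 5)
Your proposal is correct and follows essentially the same route as the paper: Lemma~\ref{lemma1} puts every $x$ with $\supp_{(2^n)}(x)\in\mathbb{I}^f_g$ into the difference set, and Lemma~\ref{cor1} supplies $\mathfrak c$ many such supports, yielding the lower bound that matches the trivial upper bound $|\T|=\mathfrak c$. The only difference is that you make explicit the injection $\xi\mapsto x_\xi$ via the $B^\xi$ family and verify the co-infiniteness needed for Fact~\ref{factsupp} — a detail the paper leaves implicit in the one-line identity $|\{x:\supp_{(2^n)}(x)\in\mathbb{I}^f_g\}|=|\mathbb{I}^f_g|$ — which is a sound (and slightly more careful) rendering of the same argument.
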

\begin{proof}
 In Lemma \ref{lemma1} we have shown  that $\{x:supp_{(2^n)}(x)\in \mathbb{I}^f_g\}\subset t^{f,g}_{(2^n)}(\T)\setminus t_{(2^n)}(\T)$. Now as $|\{x:\ supp_{(2^n)} (x)\in \mathbb{I}^f_g\}|=|\mathbb{I}^f_g|$, so Lemma \ref{cor1} tells us that $|\{x:supp_{(2^n)} (x)\in \mathbb{I}^f_g\}|=|\mathbb{I}^f_g|=\mathfrak{c}.$
That is, $|t^{f,g}_{(2^n)}(\T)\setminus t_{(2^n)}(\T)|\geq \mathfrak{c}$ which gives our result.

\end{proof}
On the basis of the existing knowledge that $t_{(2^n)}(\T)$ is countably infinite in size, an obvious but important consequence coming from Proposition \ref{cor2} is the following.
\begin{corollary}\label{cor10}
$|t^{f,g}_{(2^n)}(\T)|=\mathfrak{c}$.
\end{corollary}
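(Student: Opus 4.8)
The plan is to prove the equality by sandwiching the cardinality of $t^{f,g}_{(2^n)}(\T)$ between $\mathfrak{c}$ and $\mathfrak{c}$, so that the content of the statement is inherited entirely from Proposition \ref{cor2}. For the lower bound, I would note the trivial containment
$$
t^{f,g}_{(2^n)}(\T) \supseteq t^{f,g}_{(2^n)}(\T)\setminus t_{(2^n)}(\T),
$$
and then invoke Proposition \ref{cor2}, which asserts that the difference set on the right has cardinality exactly $\mathfrak{c}$. Hence $|t^{f,g}_{(2^n)}(\T)|\geq\mathfrak{c}$ immediately.

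For the upper bound I would use only that $t^{f,g}_{(2^n)}(\T)$ is, by definition, a subset of the circle group $\T$, together with $|\T|=\mathfrak{c}$; this forces $|t^{f,g}_{(2^n)}(\T)|\leq\mathfrak{c}$. Combining the two estimates via the Cantor--Schr\"oder--Bernstein theorem yields $|t^{f,g}_{(2^n)}(\T)|=\mathfrak{c}$, as desired.

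I expect no genuine obstacle here, since the corollary is a one-line consequence of the preceding proposition; the only thing worth recording explicitly is the source of each bound. As an alternative phrasing of the same argument, one may write $t^{f,g}_{(2^n)}(\T)$ as the disjoint union of the set $t^{f,g}_{(2^n)}(\T)\setminus t_{(2^n)}(\T)$, which has size $\mathfrak{c}$ by Proposition \ref{cor2}, and the subgroup $t_{(2^n)}(\T)=\Z(2^\infty)$, which is countably infinite; then $|t^{f,g}_{(2^n)}(\T)|=\max\{\mathfrak{c},\aleph_0\}=\mathfrak{c}$. Either route completes the proof.
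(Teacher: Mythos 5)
Your argument is correct and coincides with the paper's own proof, which likewise sandwiches the cardinality via $t^{f,g}_{(2^n)}(\T)\setminus t_{(2^n)}(\T)\subseteq t^{f,g}_{(2^n)}(\T)\subseteq \T$ and invokes Proposition \ref{cor2}. Your alternative disjoint-union phrasing is just a restatement of the same one-line argument, so there is nothing further to reconcile.
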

\begin{proof}
The observation immediately follows as
$$
 t^{f,g}_{(2^n)}(\T)\setminus t_{(2^n)}(\T)\subseteq t^{f,g}_{(2^n)}(\T)\subseteq \T.
 $$.
\end{proof}

\subsection{The general case for arithmetic sequences and some more observations}

In this section, we generalize the whole idea of the last section for arbitrary arithmetic sequences and try to generalize Example \ref{example01} and Corollary \ref{cor10} in this context.

First we prove a lemma analogous to Lemma \ref{lemma1} which gives a sufficient condition for some $x$ to be in $t^{f,g}_{(a_n)}(\T)$.

\begin{lemma}\label{theorem1}
Let  $(a_n)$ be an arithmetic sequence and let $x\in\T$ be such that $supp(x)\in\mathbb{I}^f_g$ and $c_n=q_n-1$ for all $n\in supp(x)$. Then $x\in t^{f,g}_{(a_n)}(\T)$.
\end{lemma}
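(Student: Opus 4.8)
The plan is to adapt the argument of Lemma \ref{lemma1} (the case $a_n=2^n$) to an arbitrary arithmetic sequence, the only genuinely new ingredient being an exact evaluation of $\|a_nx\|$ that exploits the hypothesis $c_n=q_n-1$ on $\supp(x)$. Using the definition of $\mathbb{I}^f_g$, write $\supp(x)=\bigcup_{r=1}^{\infty}[n_{2r-1},n_{2r}]$ with $A'=(n_r)\in\iZ_g(f)$. Since $a_n\mid a_k$ for $k\geq n$, reduction modulo $1$ of the canonical representation (\ref{canonical:repr}) kills every term with $k\leq n$ and gives $a_nx\equiv\sum_{k>n}c_k/(a_k/a_n)\pmod 1$, where $a_k/a_n=q_{n+1}\cdots q_k\geq 2^{k-n}$. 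First I would fix $m\in\N$ and analyse two homogeneous regimes for the window $\{n+1,\dots,n+m\}$.

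In the first regime $c_{n+1}=\cdots=c_{n+m}=0$ (the window misses $\supp(x)$), the first possibly nonzero tail term has index $>n+m$, so $\{a_nx\}=(a_n/a_{n+m})\{a_{n+m}x\}\in[0,\,a_n/a_{n+m})\subseteq[0,\,1/2^m)$ and hence $\|a_nx\|\leq 1/2^m$. In the second regime $n+1,\dots,n+m\in\supp(x)$, the hypothesis forces $c_{n+j}=q_{n+j}-1$ for $j=1,\dots,m$, and the telescoping identity
$$
\sum_{j=1}^{m}\frac{q_{n+j}-1}{q_{n+1}\cdots q_{n+j}}=1-\frac{a_n}{a_{n+m}}
$$
yields $\{a_nx\}=1-(a_n/a_{n+m})(1-\{a_{n+m}x\})$, so again $\|a_nx\|=(a_n/a_{n+m})(1-\{a_{n+m}x\})\leq a_n/a_{n+m}\leq 1/2^m$. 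Thus on every homogeneous window $\|a_nx\|\leq 1/2^m$.

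It then remains to confine the set of \emph{bad} $n$ (those whose window is not homogeneous) to $\iZ_g(f)$. Exactly as in Lemma \ref{lemma1}, choose $r_0$ with $n_{r_0}>m$, set $A_i=(\{n_r:r\geq r_0\}-i)\cap\N$ for $0\leq i\leq m$, and $A=[1,n_{r_0}]\cup\bigcup_{i=0}^{m}A_i$. Subadditivity of $f$ together with $f(g(n))\to\infty$ gives $d^f_g(A_i)\leq d^f_g(A')=0$, so $d^f_g(A)=0$ since $\iZ_g(f)$ is an ideal. For $n\notin A$ the enlarged window $\{n,\dots,n+m\}$ contains no endpoint $n_r$, hence lies in a single block-interior or gap of $\supp(x)$ and is therefore homogeneous, placing $n$ in one of the two regimes above. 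Consequently, given $\eps>0$ and picking $m$ with $1/2^m<\eps$, we get $\{n:\|a_nx\|\geq\eps\}\subseteq A\in\iZ_g(f)$; as $\iZ_g(f)$ is downward closed this set has $f^g$-density $0$, i.e. $a_nx\to0$ $f^g$-statistically and $x\in t^{f,g}_{(a_n)}(\T)$.

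The crux is the evaluation of $\{a_nx\}$ on homogeneous windows, and the precise role of the hypothesis $c_n=q_n-1$ is to make the in-support sum telescope to $1-a_n/a_{n+m}$, so that $\|a_nx\|$ collapses back to within $1/2^m$; without this maximality condition one can only bound the digits individually and the estimate breaks down. By contrast, the density bookkeeping — that the bad set is a finite union of shifts of the null set $A'$ — is routine and carries over verbatim from the $a_n=2^n$ case.
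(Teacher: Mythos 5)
Your proposal is correct and takes essentially the same route as the paper's proof: the same bad set $A=[1,n_{r_0}]\cup\bigcup_{i=0}^{m}A_i$ built from shifts of the endpoint sequence $(n_r)\in\iZ_g(f)$, the same dichotomy between windows lying in a block $B_r$ versus a gap $G_r$, and the same telescoping of $\sum(q_k-1)/a_k$ to get $\|a_nx\|\leq a_n/a_{n+m}\leq 2^{-m}$. Your exact identity $\{a_nx\}=1-(a_n/a_{n+m})(1-\{a_{n+m}x\})$ is merely a sharpened form of the paper's two-sided tail estimates and does not change the argument.
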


\begin{proof} Let $x=\sum\limits_{n=1}^{\infty} \frac{c_n}{a_n}$ be the canonical representation of $x\in\T$ where $c_1=0$, $c_n$ is either $0$ or $(q_n-1)$ for any $n>1$ and $\{n:c_n=q_n-1\}=\bigcup\limits_{r=1}^{\infty} B_r \in \mathbb{I}^f_g$ where as in \ref{lemma1} $B_r=[n_{(2r-1)},n_{(2r)}]$ and $G_r := [n_{(2r)}+1,n_{(2r+1)}-1]$ for some infinite $B= (n_r) \subseteq \N$ (i.e. the sets $B_r$ and $G_r$, $r \in \N$ forming a partition of $\N$). To show that $x\in t^{f,g}_{(a_n)}(\T)$ we proceed exactly as in Lemma \ref{lemma1}. We take an arbitrary $m\in\N$ and get the same $r_0\in\N$ and $A\subset \N$ with $d^f_g(A)=0$ where as before $A= \bigcup\limits_{i=0}^{m} A_i \cup [1, n_{r_0}]$, $A_i=\{ n_r -i :\ r \in \N$ and $ r \geq r_0 \}\cap\N$. What is required now is to show that $\lim\limits_{\stackrel{n\rightarrow\infty}{n\in\N\setminus A}}\|a_nx\|=0$. For $n \in \N \setminus A$, by the choice of $A$ and the definition of $B_r$ and $G_r$, we deduce that
  \begin{itemize}
  \item[(a)] either $n\in B_r$ for some $r\in \N$, which actually means that $ n \in [n_{2r-1}+1, n_{2r} - m-1]$ and consequently $n+1, n+2, \ldots , n+m \in B_r$, or
  \item[(b)] $n\in G_r$ for some $r\in \N$, and by the same reasoning as above we can again conclude that $n+1, n+2, \ldots , n+m \in G_r$.
 \end{itemize}
 In case (b) this leads to $c_{n+1} =c_{n+2} = \ldots =c_{n+m}=0$, and consequently
  $$
  a_nx = \sum\limits_{k=n+1+m}^{\infty}\frac{c_k}{a_k}\cdot a_n\leq \sum\limits_{k=n+1+m}^{\infty}\frac{q_k-1}{a_k}\cdot a_n=\sum\limits_{k=n+1+m}^{\infty}\left(\frac{1}{a_{k-1}}-\frac{1}{a_{k}}\right)\cdot a_n\leq \frac{a_n}{a_{m+n}}.
 $$
 In case (a) this leads to $c_k=q_k-1$ for $k=n+1,n+2,...,n+m$ which implies that
  $$
 a_nx = \sum\limits_{k=n+1}^{n+m}\frac{q_k-1}{a_k}\cdot a_n+\sum\limits_{k=n+1+m}^{\infty}\frac{c_k}{a_k}\cdot a_n.
 $$
Now the first part is
$$
\sum\limits_{k=n+1}^{n+m}\frac{q_k-1}{a_k}\cdot a_n=\sum\limits_{k=n+1}^{n+m}\left(\frac{1}{a_{k-1}}-\frac{1}{a_{k}}\right)\cdot  a_n=1-\frac{a_n}{a_{n+m}}
$$
and the second part
$$
\sum\limits_{k=n+1+m}^{\infty}\frac{c_k}{a_k}.a_n \leq \sum\limits_{k=n+1+m}^{\infty}\frac{q_k-1}{a_k}\cdot a_n=\sum\limits_{k=n+1+m}^{\infty}\left(\frac{1}{a_{k-1}}-\frac{1}{a_{k}}\right)\cdot a_n\leq \frac{a_n}{a_{m+n}}.
$$
 Therefore, we obtain that $\|a_nx\| \leq \frac{a_n}{a_{n+m}}\leq \frac{1}{2^m}$. As $m\in\N$ was chosen arbitrarily, so we conclude that $\|a_nx\|$ converges $f^g$-statistically to $0$.
 \end{proof}

Now we are going to provide another, very natural, sufficient condition for $x\in t^{f,g}_{(a_n)}(\T)$ in line of (Theorem 4.4 \cite{DDB}).

 \begin{theorem}\label{propLaaaast}
Let $(a_n)$ be an arithmetic sequence and $x\in \T$. If $ \ d^f_g (supp(x))=0$, then $x\in t^{f,g}_{(a_n)}(\T)$.
\end{theorem}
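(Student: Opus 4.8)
The plan is to deduce the $f^g$-statistical convergence of $(a_nx)$ from the fact that $supp(x)$, together with its finitely many downward shifts, lies in the ideal $\iZ_g(f)$. Write $S=supp(x)$, so that $d^f_g(S)=0$ means $S\in\iZ_g(f)$. Since $a_k\mid a_n$ for $k\le n$, the low-order terms of the canonical representation contribute integers, whence $a_nx\equiv\sum_{k>n}\frac{c_ka_n}{a_k}\pmod 1$. Fix $\eps>0$ and choose $m\in\N$ with $2^{-m}<\eps$. The first step is the pointwise estimate already used in Lemma \ref{theorem1}: if none of $n+1,\dots,n+m$ belongs to $S$, i.e.\ $c_{n+1}=\cdots=c_{n+m}=0$, then the tail is nonnegative and bounded by a telescoping sum,
$$\|a_nx\|=\sum_{k=n+m+1}^{\infty}\frac{c_ka_n}{a_k}\le\sum_{k=n+m+1}^{\infty}\Big(\frac{1}{a_{k-1}}-\frac{1}{a_k}\Big)a_n=\frac{a_n}{a_{n+m}}\le\frac{1}{2^m}<\eps.$$

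Taking the contrapositive, I obtain the containment $\{n:\|a_nx\|\ge\eps\}\subseteq\bigcup_{i=1}^m(S-i)$, where $S-i=\{s-i:s\in S\}\cap\N$; indeed $\|a_nx\|\ge\eps$ forces $n+i\in S$ for some $1\le i\le m$. The next step is to show that each downward shift $S-i$ still lies in $\iZ_g(f)$. Since $|(S-i)\cap[1,n]|=|S\cap[i+1,n+i]|\le|S\cap[1,n]|+i$, subadditivity of $f$ gives $f(|(S-i)\cap[1,n]|)\le f(|S\cap[1,n]|)+f(i)$, and dividing by $f(g(n))$ and letting $n\to\infty$ yields $\overline d^f_g(S-i)\le\overline d^f_g(S)+0=0$, because $f(g(n))\to\infty$ (as $g(n)\to\infty$ and $f$ is unbounded and non-decreasing) kills the fixed error term $f(i)/f(g(n))$. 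This is exactly the shift computation already carried out for the sets $A_i$ in the proof of Lemma \ref{lemma1}.

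Finally, since $\iZ_g(f)$ is an ideal it is closed under the finite union $\bigcup_{i=1}^m(S-i)$ and under passage to subsets, so $d^f_g(\{n:\|a_nx\|\ge\eps\})=0$. As $\eps>0$ was arbitrary, $(a_nx)$ converges to $0$ $f^g$-statistically, that is $x\in t^{f,g}_{(a_n)}(\T)$. I expect the only delicate point to be the shift-invariance $S-i\in\iZ_g(f)$: one must check that $f(g(n))\to\infty$ and that the finitely many error terms $f(i)/f(g(n))$, $i=1,\dots,m$, all vanish, so that the ideal absorbs the $m$ shifts simultaneously; everything else is the telescoping estimate of Lemma \ref{theorem1} together with the ideal axioms.
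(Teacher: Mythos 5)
Your proof is correct and follows essentially the same route as the paper's: the paper likewise shifts $A=supp(x)$ by $i=0,1,\dots,k$, shows each shift stays in $\iZ_g(f)$ via the subadditivity estimate $f(|A\cap[1,n]|+i)\le f(|A\cap[1,n]|)+f(i)$ with $f(g(n))\to\infty$, takes the finite union $A^*$, and applies the same telescoping bound $\{a_nx\}\le a_n/a_{n+k}\le 2^{-k}$ off $A^*$. The only (immaterial) difference is that you correctly observe the shift $i=0$ is unnecessary, since $a_n c_n/a_n$ is an integer and only $c_{n+1},\dots,c_{n+m}$ matter.
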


\begin{proof}
Set $A = supp(x)$. Then $d^f_g(A) = 0$, by hypothesis. Pick a positive $k\in \N$ and note that for any $i\in\N$ ,
\begin{eqnarray*}
d^f_g(A-i)= \lim\limits_{n\rightarrow\infty} \frac{f(|(A-i)\cap [1,n]|)}{f(g(n))} &\leq& \lim\limits_{n\rightarrow\infty} \frac{f(|A \cap [1,n]|+i)}{f(g(n))} \\ &\leq& \ d^f_g(A) +\lim\limits_{n\rightarrow\infty} \frac{f(i)}{f(g(n))} \ = 0.
\end{eqnarray*}
 Therefore, $A^*=\bigcup_{i=0}^k (A-i)\cap \N\in \iZ_g(f)$. Hence, it is enough to check that $\|a_nx\|\leq \frac{1}{k}$ for all $n\in \N\setminus A^*$. Note that $n\in \N\setminus A^*$ precisely when $n+i \not \in A$ for $i=0,1,\ldots k$. This means that in the canonical representation of $x$ one has $c_n=c_{n+1}= \ldots = c_{n+k}= 0$ for all $n\in \N\setminus A^*$. Hence,
\begin{eqnarray*}
\{a_nx\} =a_n\cdot\sum\limits_{i=n+k+1}^{\infty}\frac{c_i}{a_i} &\leq& a_n\cdot\sum\limits_{i=n+k+1}^{\infty}\frac{q_i-1}{a_i} \\ &=& a_n\cdot \sum\limits_{i=n+k+1}^{\infty}\left(\frac{1}{a_{i-1}}-\frac{1}{a_i}\right)  \leq \frac{a_n}{a_{n+k}}\leq \frac{1}{2^k}<\frac{1}{k}.
\end{eqnarray*}
\end{proof}

 We shall invert this theorem in Corollary \ref{Last:Corollary}. As mentioned in the introduction following is the more general version of Theorem B \cite{DDB}.
 \begin{theorem}
  For any arithmetic sequence $(a_n)$, We have $|t^{f,g}_{(a_n)}(\T)|=\mathfrak c$.
\end{theorem}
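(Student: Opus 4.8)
The plan is to split the equality into the two inequalities $|t^{f,g}_{(a_n)}(\T)|\le \mathfrak c$ and $|t^{f,g}_{(a_n)}(\T)|\ge \mathfrak c$, the first of which is immediate and the second of which carries all the content. Since $t^{f,g}_{(a_n)}(\T)$ is a subset of $\T$ and $|\T|=\mathfrak c$, we at once have $|t^{f,g}_{(a_n)}(\T)|\le \mathfrak c$. Hence the whole task reduces to exhibiting $\mathfrak c$ many pairwise distinct elements of $t^{f,g}_{(a_n)}(\T)$, and the engine for producing them is Lemma \ref{theorem1}: it guarantees that whenever $x\in\T$ satisfies $supp(x)\in\mathbb{I}^f_g$ together with $c_n=q_n-1$ for every $n\in supp(x)$, then $x\in t^{f,g}_{(a_n)}(\T)$.

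Accordingly, to each member $I\in\mathbb{I}^f_g$ I would associate the element $x_I\in\T$ whose canonical representation (Fact \ref{factsupp}) is prescribed by $c_n=q_n-1$ for $n\in I$ and $c_n=0$ for $n\notin I$. The point needing verification is that this really defines an element of $\T$ in canonical form, i.e.\ that $c_n<q_n-1$ for infinitely many $n$; this holds because every $I\in\mathbb{I}^f_g$ is a union of blocks $B_r=[n_{(2r-1)},n_{(2r)}]$ separated by the gaps $G_r$, and on the infinitely many indices lying in these gaps we have $c_n=0<q_n-1$ (recall $q_n\ge 2$). With this in hand, $supp(x_I)=I$, so Lemma \ref{theorem1} applies and gives $x_I\in t^{f,g}_{(a_n)}(\T)$. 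Moreover the assignment $I\mapsto x_I$ is injective, since $I$ is recovered from $x_I$ as its support. Therefore $|t^{f,g}_{(a_n)}(\T)|\ge |\mathbb{I}^f_g|$, and invoking Lemma \ref{cor1}, which yields $|\mathbb{I}^f_g|=\mathfrak c$, completes the lower bound and hence the proof.

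I do not expect a genuine obstacle here, as the hard work is already packaged into Lemmas \ref{theorem1} and \ref{cor1}; the only step demanding a little care is the legitimacy of the $x_I$, namely that imposing $c_n=q_n-1$ on $I$ and $c_n=0$ off $I$ respects the canonical constraint that $c_n<q_n-1$ infinitely often. To sidestep any worry about degenerate blocks, I would in fact run the injection over the explicit $\mathfrak c$-sized subfamily $\{B^{\xi}:\xi\in\{0,1\}^{\N}\}\subseteq\mathbb{I}^f_g$ built in Lemma \ref{cor1} from the well-separated model $B=\bigcup_r B_r$ of \ref{EqJune26}, each member of which omits infinitely many blocks $B_j$ and thus visibly leaves infinitely many indices with $c_n=0$. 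The composite map $\xi\mapsto x_{B^{\xi}}$ is then injective and lands in $t^{f,g}_{(a_n)}(\T)$, producing the required $\mathfrak c$ many distinct elements.
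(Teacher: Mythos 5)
Your proposal is correct and follows essentially the same route as the paper's (first) argument: map each $B\in\mathbb{I}^f_g$ to $x_B$ with $c_n=q_n-1$ on $B$, apply Lemma \ref{theorem1}, and conclude via injectivity and $|\mathbb{I}^f_g|=\mathfrak c$ from Lemma \ref{cor1}; your extra check that $c_n<q_n-1$ holds infinitely often is a point the paper leaves implicit. The only difference is that the paper also sketches a second, alternative argument using $|\iZ_g(f)|=\mathfrak c$ together with Theorem \ref{propLaaaast}, which you do not need.
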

\begin{proof}
 Clearly $t^{f,g}_{(a_n)}(\T)\subset \T$ implies $|t^{f,g}_{(a_n)}(\T)|\leq |\T|=\mathfrak c$.

To prove the inequality $|t^{f,g}_{(a_n)}(\T)|\geq |\T|=\mathfrak c$ we use two alternative arguments.

Let $B\in \mathbb{I}^f_g $. Define $x_B\in \T$ with $supp(x_B)=B$ and  $c_n=q_n-1$ for all $n \in B$.
According to Lemma \ref{theorem1}, 
$x_B\in t^{f,g}_{(a_n)}(\T)$.
Since the map $\mathbb{I}^f_g \ni B  \mapsto x_B\in t^{f,g}_{(a_n)}(\T)$ is obviously injective, $|t^{f,g}_{(a_n)}(\T)|=\mathfrak c$ by Lemma \ref{cor1}.

The second argument uses the fact that $|\iZ_g(f)|= \mathfrak c$  as has been shown in Proposition \ref{cor1}.
This provides $\mathfrak c$ many elements $\{x_i:i\in I\}$ in $\T$ with distinct supports of $f^g$-density 0 and as a result
applying Theorem \ref{propLaaaast}, we obtain that $x_i \in t^{f,g}_{(a_n)}(\T)$ for every $i \in I$.
\end{proof}

Below we have the more general version of Theorem C \cite{DDB}.

\begin{theorem}
$t^{f,g}_{(a_n)}(\T)\neq t_{(a_n)}(\T)$ for any arithmetic sequence $(a_n)$.
\end{theorem}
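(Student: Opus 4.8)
The plan is to show that the inclusion $t_{(a_n)}(\T)\subseteq t^{f,g}_{(a_n)}(\T)$ established earlier is \emph{strict}, by exhibiting an element of $t^{f,g}_{(a_n)}(\T)\setminus t_{(a_n)}(\T)$. The natural first guess---an $x$ with $supp(x)\in\mathbb I^f_g$ and $c_n=q_n-1$ on its support, as furnished by Lemma \ref{theorem1}---does not work in general: when $q_n\to\infty$ such an $x$ already satisfies $\|a_nx\|\le a_n/a_{n+1}\le 1/q_{n+1}\to 0$, so it lies in $t_{(a_n)}(\T)$ too. I would therefore split the argument according to the asymptotic behaviour of the ratio sequence $(q_n)$.

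If $(q_n)$ is bounded, I would argue purely by cardinality: by Eggleston's observation (E1) the group $t_{(a_n)}(\T)$ is then countable, whereas the preceding theorem gives $|t^{f,g}_{(a_n)}(\T)|=\mathfrak c$. Hence the two groups cannot coincide.

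If $(q_n)$ is unbounded, then $S_0:=\{n\in\N:q_n\ge 3\}$ is infinite, and by tallness of $\iZ_g(f)$ (Proposition \ref{tall}) I would choose an infinite $S\subseteq S_0$ with $d^f_g(S)=0$. Using the canonical representation of Fact \ref{factsupp}, define $x$ by $c_n=\lfloor q_n/2\rfloor$ for $n\in S$ and $c_n=0$ otherwise; since $q_n\ge 3$ on $S$ one has $1\le \lfloor q_n/2\rfloor\le q_n-2$, so $supp(x)=S$ and the representation is genuinely canonical. Because $d^f_g(supp(x))=0$, Theorem \ref{propLaaaast} yields $x\in t^{f,g}_{(a_n)}(\T)$. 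To see $x\notin t_{(a_n)}(\T)$, I would evaluate at $n=s-1$ for $s\in S$: exactly as in the computation in the proof of Theorem \ref{propLaaaast} one obtains $\{a_{s-1}x\}=(\lfloor q_s/2\rfloor+\{a_sx\})/q_s$. Since $\{a_sx\}\in[0,1)$ and $q_s\ge 3$, the lower estimate $\lfloor q_s/2\rfloor/q_s\ge 1/2-1/(2q_s)\ge 1/3$ and the upper estimate $(\lfloor q_s/2\rfloor+1)/q_s\le 1/2+1/q_s\le 5/6$ force $\{a_{s-1}x\}\in[1/3,5/6]$, whence $\|a_{s-1}x\|\ge 1/6$ for every $s\in S$. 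Thus $\|a_nx\|\not\to 0$ and $x\notin t_{(a_n)}(\T)$.

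The main obstacle is precisely the failure of the $c_n=q_n-1$ construction when $q_n\to\infty$; the resolution is the ``middle coefficient'' $\lfloor q_n/2\rfloor$, which pins $\{a_{s-1}x\}$ near $1/2$ and hence keeps $\|a_nx\|$ bounded away from $0$ along a subsequence. The two ingredients that make this succeed are the availability of ratios $q_n\ge 3$ (so that $\lfloor q_n/2\rfloor$ is a genuine middle value, distinct from both $0$ and $q_n-1$), guaranteed exactly in the unbounded case, and the tallness of $\iZ_g(f)$, which lets me keep the support sparse enough to apply Theorem \ref{propLaaaast}.
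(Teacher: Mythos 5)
Your proof is correct and follows essentially the same route as the paper: the same case split on whether $(q_n)$ is bounded (settled by comparing the countability of $t_{(a_n)}(\T)$ with $|t^{f,g}_{(a_n)}(\T)|=\mathfrak c$), and in the unbounded case the same construction of an $x$ with support of $d^f_g$-density zero obtained from tallness of $\iZ_g(f)$ and middle coefficients $c_n=\lfloor q_n/2\rfloor$, with Theorem~\ref{propLaaaast} giving $x\in t^{f,g}_{(a_n)}(\T)$. The only difference is in the last step: the paper concludes $x\notin t_{(a_n)}(\T)$ by citing \cite[Theorem 2.3]{DI1}, whereas you verify it directly via the estimate $\|a_{s-1}x\|\geq 1/6$ for $s$ in the support, which is also why requiring $q_n\geq 3$ there suffices in place of the paper's choice of a subsequence with $q_n\to\infty$.
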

\begin{proof}
If $\left(q_n\right)$ is bounded then $t^{f,g}_{(a_n)}(\T)\neq t_{(a_n)}(\T)$, as $t_{(a_n)}(\T)$ is countable.

Therefore, we consider $\left(q_n\right)$ is not bounded. Then there exists $B\subset \N$ such that $(q_n)_{n\in B}$ diverges to $\infty$. Now, in view of Proposition \ref{tall} there exists a $B'\subseteq B$ such that $d^f_g(B')=0$. So, in addition we can assume that $d^f_g(B) = 0$. Take
$$
 x=\sum\limits_{n=1}^{\infty}\frac{c_n}{a_n}\in\T \mbox{ with }supp(x)=B\mbox{ and }c_n=\left\lfloor \frac{ q_n}{2}\right\rfloor
 \mbox{ for all } n\in B.
$$
Then $x\in t^{f,g}_{(a_n)}(\T)$ by Theorem \ref{propLaaaast}, while $x\not\in t_{(a_n)}(\T)$ (by \cite[Theorem 2.3]{DI1}).
This proves $t^{f,g}_{(a_n)}(\T)\neq t_{(a_n)}(\T)$.

\end{proof}

 It has already been showed that for any arithmetic sequence $(a_n)$, the condition in Theorem \ref{propLaaaast} is not necessary for some $x\in\T$ to be in $t^{f,g}_{(a_n)}(\T)$ (see Example 4.5 \cite{DDB}). More precisely we have the following.

\begin{example}\em{
We have already shown that $\mathbb{I}^f_g \not \subseteq \iZ_g(f)$. Therefore, there exists a $B \in \mathbb{I}^f_g$ such that $\overline {d}^f_g(B)> 0$.
Let $x=\sum\limits_{n=1}^{\infty}\frac{c_n}{a_n}\in\T$ be such that $c_n=0$ whenever $n\notin B$ and for all $n\in B$, $c_n=q_n-1$ as described in Lemma \ref{theorem1}. Then applying Lemma \ref{theorem1} we can see that $x\in t^{f,g}_{(a_n)}(\T)$.
 Since $d^f_g(supp(x))\neq 0$, it follows that $supp(x)$ does not satisfy the criteria of Theorem \ref{propLaaaast} though $x\in t^{f,g}_{(a_n)}(\T)$.
}
\end{example}

Finally, following \cite{DDB}, we address the natural question as to, for an  arithmetic sequence $(a_n)$, which specific  elements of $\T$ would not surely belong to $t^{f,g}_{(a_n)}(\T)$.

\begin{proposition}\label{lemmanew01}
Let $(a_n)$ be a $q$-bounded arithmetic sequence, $f$ be an unbounded modulus function and $g\in\G$. Consider $x\in\T$ be such that
\begin{itemize}
\item[(i)] $supp(x)=\bigcup\limits_{n=1}^{\infty}[l_n,k_n]$, where $l_n,k_n\in\N$, $l_n\leq k_n < l_{n+1}-1$ for all $n\in \N$;
\item[(ii)] $\overline{d}^f_g(A)>0$, where $A=\{l_n:\ n\in\N\}$.
\end{itemize}
Then $x\notin t^{f,g}_{(a_n)}(\T)$.
\end{proposition}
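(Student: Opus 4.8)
The plan is to verify the negation of membership directly. Since $x\in t^{f,g}_{(a_n)}(\T)$ would force $d^f_g(\{n:\|a_nx\|\ge\eps\})=0$ for every $\eps>0$, it suffices to exhibit a single $\eps>0$ for which $\overline{d}^f_g(E_\eps)>0$, where $E_\eps=\{n\in\N:\|a_nx\|\ge\eps\}$. Writing $q_n=a_n/a_{n-1}$ and invoking $q$-boundedness, I fix $M$ with $2\le q_n\le M$ for all $n$ and put $\eps=\frac{1}{2M}$. The subtle point, which I anticipate already, is that the natural ``large'' values of $\|a_nx\|$ occur not at the left endpoints $l_n$ (where the hypothesis lives) but just to the left of the right endpoints $k_n$; so I would probe at the indices $k_n-1$ and afterwards transfer the density hypothesis from $A=\{l_n\}$ to $\{k_n-1\}$.

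First I would establish the estimate $\|a_{k_j-1}x\|\ge\eps$ for every $j$. Using the canonical representation $x=\sum_n c_n/a_n$ from Fact \ref{factsupp} together with $a_n/a_k\in\Z$ for $k\le n$, one obtains $\{a_nx\}=\sum_{k>n}\frac{c_k}{q_{n+1}\cdots q_k}=:y_n\in[0,1)$. For $n=k_j-1$ the leading term is $c_{k_j}/q_{k_j}$; since $k_j\in\supp(x)$ we have $c_{k_j}\ge1$, whence $y_{k_j-1}\ge 1/q_{k_j}\ge 1/M$. For the upper bound I would use the gap $[k_j+1,l_{j+1}-1]$, which is nonempty because $k_j<l_{j+1}-1$: all digits there vanish, so the tail $y_{k_j}=\sum_{k\ge l_{j+1}}\frac{c_k}{q_{k_j+1}\cdots q_k}$ carries the factor $1/(q_{k_j+1}\cdots q_{l_{j+1}-1})\le 1/2$, giving $y_{k_j}\le 1/2$ and hence $y_{k_j-1}=\frac{c_{k_j}+y_{k_j}}{q_{k_j}}\le 1-\frac{1}{2q_{k_j}}\le 1-\frac{1}{2M}$. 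Thus $y_{k_j-1}\in[\frac{1}{M},\,1-\frac{1}{2M}]$ and $\|a_{k_j-1}x\|=\min(y_{k_j-1},1-y_{k_j-1})\ge\frac{1}{2M}=\eps$, so $\{k_j-1:j\in\N\}\subseteq E_\eps$ up to the at most one index with $k_j=1$.

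The crux, and the step I expect to be the main obstacle, is to deduce $\overline{d}^f_g(\{k_j-1\})>0$ from the hypothesis $\overline{d}^f_g(A)>0$ on the left endpoints. Here I would exploit the purely combinatorial observation that, for each $n$, the sets $\{j:l_j\le n\}$ and $\{j:k_j\le n\}$ differ only by blocks straddling $n$, and since the intervals $[l_j,k_j]$ are pairwise disjoint at most one such block exists; hence $|A\cap[1,n]|-|\{k_j\}\cap[1,n]|\le 1$, and absorbing a further unit shift gives $|A\cap[1,n]|-|\{k_j-1\}\cap[1,n]|\le 2$ for all $n$. Combining this with the monotonicity and subadditivity of $f$ (so that $f(|A\cap[1,n]|)\le f(|\{k_j-1\}\cap[1,n]|)+f(2)$) and with $f(g(n))\to\infty$ (which annihilates the term $f(2)/f(g(n))$), I obtain $\overline{d}^f_g(\{k_j-1\})\ge\overline{d}^f_g(A)>0$. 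Therefore $\overline{d}^f_g(E_\eps)\ge\overline{d}^f_g(\{k_j-1\})>0$, so $d^f_g(E_\eps)\ne 0$ and consequently $x\notin t^{f,g}_{(a_n)}(\T)$, as required. The only genuinely delicate input is this density transfer; the estimate on $\|a_{k_j-1}x\|$ is a routine telescoping computation once one decides to probe just left of the right endpoints rather than the left endpoints.
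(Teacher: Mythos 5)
Your proof is correct, and it reaches the conclusion by a mirror image of the paper's argument rather than by the same witness set. Both proofs share the overall strategy: exhibit a set of indices of positive upper $f^g$-density, located at the block boundaries of $\supp(x)$, on which $\{a_nx\}$ is trapped in an interval bounded away from $0$ and $1$, and then transfer the density hypothesis on $A=\{l_n\}$ to that witness set via monotonicity and subadditivity of $f$ together with $f(g(n))\to\infty$. The paper, however, probes at $B=\{l_j-2\}$, two steps \emph{before the left endpoints}: there $c_{n+1}=0$ (this is exactly where the hypothesis $k_{j-1}<l_j-1$ enters) while $c_{n+2}=c_{l_j}\geq 1$, which traps $\{a_nx\}\in\left[\frac{1}{M^2},\frac{1}{2}\right]$; since $B$ is literally the shift $A-2$, the density transfer is the trivial shift estimate already used in Theorem \ref{propLaaaast}. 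You probe instead at $\{k_j-1\}$, one step \emph{before the right endpoints}, using $c_{k_j}\geq 1$ for the lower bound and the nonempty gap $[k_j+1,l_{j+1}-1]$ (again the hypothesis $k_j<l_{j+1}-1$) for the bound $y_{k_j}\leq\frac{1}{2}$, trapping $y_{k_j-1}\in\left[\frac{1}{M},1-\frac{1}{2M}\right]$. Because your witness set is not a shift of $A$, you need the additional combinatorial observation that $|\{j: l_j\leq n\}|-|\{j: k_j\leq n\}|\leq 1$ by disjointness of the intervals, a step the paper's choice of $B$ avoids entirely; conversely, your trap $\|a_nx\|\geq\frac{1}{2M}$ is sharper than the paper's $\frac{1}{M^2}$, though neither constant matters. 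Your edge-case handling (the at most one index with $k_j=1$, absorbing the unit shift into $f(2)/f(g(n))\to 0$) is sound, so the argument is complete as written; each route buys a small convenience — the paper a trivial density transfer, yours a cleaner digit estimate — and both use $q$-boundedness and the gap condition in the same essential way.
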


\begin{proof}
Let $q_n\leq M$ for some $M\in\N\setminus\{1\}$. We set $B=\{n-2:\ n\in A\}$. Therefore, for any $n\in B$, we can observe that $n+1\not\in supp(x)$ but $n+2\in supp(x)$. Here, $\overline{d}_g^f(B)>0$ by hypothesis. Now, for all $n\in B$, we have
$$
\{a_nx\}=a_n\sum\limits_{i=n+1}^{\infty}\frac{c_i}{a_i}\ = \ a_n\sum\limits_{i=n+2}^{\infty}\frac{c_i}{a_i}\ \leq \frac{a_n}{a_{n+1}}=\frac{1}{q_{n+1}}\ \leq \frac{1}{2}.
$$
But, for all $n\in B$, we also have
$$
\{a_nx\}=a_n\sum\limits_{i=n+1}^{\infty}\frac{c_i}{a_i}\ = \ a_n\sum\limits_{i=n+2}^{\infty}\frac{c_i}{a_i}\ \geq \frac{a_n}{a_{n+2}}=\frac{1}{q_{n+1}q_{n+2}}\ \geq \frac{1}{M^2}.
$$
Hence, we find a set $B\subseteq\N$ with $\overline{d}_g^f(B)>0$ such that for all $n\in B$, $\|a_nx\|\in[\frac{1}{M^2},\frac{1}{2}]$ i.e. $x\notin t^{f,g}_{(a_n)}(\T)$.
\end{proof}

\begin{corollary}\label{lemmanew1}
Let $(a_n)$ be a $q$-bounded arithmetic sequence, $f$ be an unbounded modulus function and $g\in\G$. Consider $x\in\T$ be such that
\begin{itemize}
\item[(i)] $supp(x)=\bigcup\limits_{n=1}^{\infty}[l_n,k_n]$, where $l_n,k_n\in\N$, $l_n\leq k_n < l_{n+1}-1$ for all $n\in \N$;
\item[(ii)]  there exist $m\in\N$ such that for all $n\in\N$, $|k_n-l_n|\leq m$ and $|l_{n+1}-k_n|\leq m$.
\end{itemize}

 Then $x\notin t^{f,g}_{(a_n)}(\T)$.
\end{corollary}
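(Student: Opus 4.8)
The plan is to deduce this corollary directly from Proposition \ref{lemmanew01}: condition (i) here is word for word condition (i) there, so it suffices to verify that hypothesis (ii) of the present statement forces $\overline{d}^f_g(A)>0$ for $A=\{l_n:n\in\N\}$, which is precisely hypothesis (ii) of the proposition. Once that is established, Proposition \ref{lemmanew01} immediately yields $x\notin t^{f,g}_{(a_n)}(\T)$.

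First I would use the two bounds in (ii) to show that $A$ is syndetic. Since $l_{n+1}-l_n=(l_{n+1}-k_n)+(k_n-l_n)\le m+m=2m$, the points $l_n$ occur with gaps at most $2m$; writing $l_n\le N<l_{n+1}$ and telescoping gives $N<l_1+2mn$, whence $|A\cap[1,N]|\ge (N-l_1)/(2m)$ for every $N$, and in particular $|A\cap[1,N]|\ge N/(4m)$ once $N\ge 2l_1$. Next I would invoke the defining property $n/g(n)\nrightarrow 0$ of $g\in\G$: this furnishes a $\delta>0$ and a strictly increasing sequence $(n_j)$ with $n_j\to\infty$ and $g(n_j)\le n_j/\delta$ (the sequence must be infinite, since otherwise $n/g(n)\to 0$).

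The modulus-function axioms are what make the numerator and denominator comparable. Using monotonicity together with subadditivity in the form $f(\lambda t)\le\lceil\lambda\rceil f(t)$ for $\lambda\ge 1$ (which follows from the triangle inequality via $f(\lceil\lambda\rceil t)\le\lceil\lambda\rceil f(t)$), I would bound the numerator below by $f(|A\cap[1,n_j]|)\ge f(n_j/(4m))\ge f(n_j)/(4m)$ and the denominator above by $f(g(n_j))\le f(n_j/\delta)\le\lceil 1/\delta\rceil f(n_j)$. Dividing, the ratio along $(n_j)$ is at least the positive constant $1/(4m\lceil 1/\delta\rceil)$, independent of $j$; since $\overline{d}^f_g$ is a $\limsup$, this gives $\overline{d}^f_g(A)\ge 1/(4m\lceil 1/\delta\rceil)>0$, as required.

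The main obstacle is exactly this comparison step: a priori $f(g(N))$ could dominate $f(N)$ for all $N$, in which case the ratio would vanish and nothing could be concluded. What rescues the argument is the standing hypothesis $n/g(n)\nrightarrow 0$, which supplies a subsequence along which $g(N)$ is only a bounded multiple of $N$; combined with the sublinear control $f(\lambda t)\le\lceil\lambda\rceil f(t)$ coming from subadditivity, this keeps both the numerator and the denominator within constant factors of $f(n_j)$. The only points demanding care are confirming $n_j\to\infty$ and $f(n_j)>0$ for large $j$ (both automatic, the latter because $f$ is unbounded and non-decreasing), after which the corollary is a direct application of Proposition \ref{lemmanew01}.
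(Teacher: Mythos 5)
Your proposal is correct, and it shares the paper's overall skeleton (reduce to Proposition \ref{lemmanew01} by checking that hypothesis (ii) forces $\overline{d}^f_g(A)>0$ for $A=\{l_n:n\in\N\}$), but the key step is argued by a genuinely different route. The paper proceeds by contradiction: assuming $d^f_g(A)=0$, it invokes the shift computation already carried out in Theorem \ref{propLaaaast} (subadditivity gives $f(|(A-i)\cap[1,n]|)\leq f(|A\cap[1,n]|)+f(i)$, and $f(i)/f(g(n))\to 0$) to conclude $d^f_g(A_i)=0$ for each $A_i=\{n-i:n\in A\}\cap\N$, and then observes that $|l_{n+1}-l_n|\leq 2m$ makes $\N=\bigcup_{i=0}^{2m}A_i$ (up to a finite set, a wrinkle the paper glosses over), so $\N\in\iZ_g(f)$, which is absurd. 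You instead argue directly and quantitatively: syndeticity gives $|A\cap[1,N]|\geq N/(4m)$ for large $N$, the standing hypothesis $n/g(n)\nrightarrow 0$ extracts a subsequence with $g(n_j)\leq n_j/\delta$, and the subadditivity bound $f(\lambda t)\leq\lceil\lambda\rceil f(t)$ (valid for $\lambda\geq 1$ by induction plus monotonicity, and trivially for $\lambda<1$) sandwiches numerator and denominator within constant multiples of $f(n_j)$, yielding the explicit estimate $\overline{d}^f_g(A)\geq 1/(4m\lceil 1/\delta\rceil)$. All the steps check out, including the correct identification of where $n/g(n)\nrightarrow 0$ is indispensable — without it $f(g(n))$ could indeed swamp $f(n)$ and the ratio could vanish. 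What each approach buys: the paper's argument is shorter because it recycles the shift machinery and the ideal property of $\iZ_g(f)$ ($\N\notin\iZ_g(f)$); yours is self-contained, avoids the finite-set covering issue, and in fact proves the stronger quantitative statement that every syndetic subset of $\N$ has upper $f$-density of weight $g$ bounded below by a constant depending only on the gap bound and on $\delta$, which makes the mechanism behind the corollary more transparent.
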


\begin{proof}
Let us consider $A=\{l_n:\ n\in\N\}$. If possible we assume that $d^f_g(A)=0$. We have already seen that for each fixed $i\in\N$, $d^f_g (A_i)=0$ where $A_i=\{n-i: \ n\in A\}\cap\N$. Since, $|l_{n+1}-l_n|\leq |l_{n+1}-k_n|+|k_n-l_n|\leq 2m$, it follows that $\N=\bigcup\limits_{i=0}^{2m}A_i\in\iZ_g(f)$. Therefore our assumption was wrong and we finally get $\overline{d}^f_g(A)>0$. Now, we can observe that $x$ satisfies all the conditions of Proposition \ref{lemmanew01}. Thus $x\notin t^{f,g}_{(a_n)}(\T)$.

\end{proof}

Again consider the following example.

\begin{example}
  Consider any unbounded modulus function $f$ and $g\in\G$.Let, $x=\frac{1}{p^r-1}$ (where $r\in\N\setminus\{1\}$ and $p$ is any prime) and $a_n=p^n$. Take $l_n=k_n= rn$ and $m=r$. Therefore from Proposition \ref{lemmanew01} we obtain, $x=\sum\limits_{n=1}^{\infty} \frac{1}{p^{mn}}=\frac{1}{p^m-1} \not\in t^{f,g}_{(p^n)}(\T)$. A particular example is $\frac{1}{8} \not\in t^{f,g}_{(3^n)}(\T)$.
\end{example}

 We will generalize the idea of the above example to construct $x\in\T$ in another way (different from Proposition 3.15) which will lie outside $t^{f,g}_{(a_n)}(\T)$.
\begin{proposition}\label{sufnot1}
 Let $(a_n)$ be an arithmetic sequence of integers, $f$ be an unbounded modulus function and $g\in\G$. Consider $x\in\T$ with $\overline{d}^f_g(supp(x))>0$. If there exists $ m_1,m_2\in\R$ with $0< m_1\leq m_2< \frac{1}{2}$ and $\forall \ n\in supp(x), \ ~ \ \frac{c_n}{q_n} \in [m_1,m_2]$, then $x\not\in t^{f,g}_{(a_n)} (\T)$.
\end{proposition}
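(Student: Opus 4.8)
The goal is to exhibit a single $\eps>0$ together with a set $S\subseteq\N$ of positive upper $f^g$-density such that $\|a_nx\|\geq\eps$ for every $n\in S$; by Definition \ref{Def1} this forces $d^f_g(\{n:\|a_nx\|\geq\eps\})\neq 0$, hence $x\notin t^{f,g}_{(a_n)}(\T)$. Since the only positive-density set we are handed is $supp(x)$, the plan is to take $S$ to be the left shift $S=\{n\in\N:n+1\in supp(x)\}$, because the size of $\|a_nx\|$ is governed by the first nonzero coefficient $c_k$ with $k>n$.

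First I would carry out the (routine) estimate of $\|a_nx\|$ for $n\in S$. Writing $x=\sum_{k\geq1}c_k/a_k$ in its canonical form and using $a_k\mid a_n$ for $k\leq n$, only the tail $k>n$ survives modulo $1$, so
\begin{equation*}
a_nx\equiv\frac{c_{n+1}}{q_{n+1}}+T_n\pmod{1},\qquad T_n=\sum_{k\geq n+2}\frac{c_k\,a_n}{a_k}.
\end{equation*}
The telescoping bound $\sum_{k\geq n+2}(q_k-1)/a_k=1/a_{n+1}$ gives $0\leq T_n\leq a_n/a_{n+1}=1/q_{n+1}\leq\tfrac12$, while the hypothesis yields $c_{n+1}/q_{n+1}\in[m_1,m_2]$ because $n+1\in supp(x)$. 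Hence $V_n:=c_{n+1}/q_{n+1}+T_n$ lies in $[m_1,\,m_2+\tfrac12]\subseteq(0,1)$, so $\{a_nx\}=V_n$ and
\begin{equation*}
\|a_nx\|=\min(V_n,1-V_n)\geq\min\Bigl(m_1,\tfrac12-m_2\Bigr)=:\eps>0,
\end{equation*}
using $V_n\geq m_1$ and $1-V_n\geq\tfrac12-m_2$. Here $q_{n+1}\geq 2$ (arithmetic sequence) is what keeps $V_n<1$ and $\tfrac12-m_2>0$. Thus $\eps$ works uniformly on $S$.

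The part that needs genuine care is checking $\overline{d}^f_g(S)>0$, i.e. that passing from $supp(x)$ to its shift does not kill the upper density. Writing $A=supp(x)$, the bijection $m\mapsto m+1$ gives $|S\cap[1,n]|=|A\cap[2,n+1]|\geq|A\cap[1,n]|-1$. Since $f$ is non-decreasing and subadditive, $f(|S\cap[1,n]|)\geq f(|A\cap[1,n]|)-f(1)$ for all large $n$, so
\begin{equation*}
\frac{f(|S\cap[1,n]|)}{f(g(n))}\geq\frac{f(|A\cap[1,n]|)}{f(g(n))}-\frac{f(1)}{f(g(n))}.
\end{equation*}
Because $g(n)\to\infty$ and $f$ is unbounded, $f(g(n))\to\infty$, so the subtracted term tends to $0$; taking $\limsup_{n\to\infty}$ yields $\overline{d}^f_g(S)\geq\overline{d}^f_g(A)>0$. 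This is precisely the upper-density analogue of the shift-invariance already used in the proof of Theorem \ref{propLaaaast}.

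Combining the two steps, $S=\{n:n+1\in supp(x)\}$ satisfies $\overline{d}^f_g(S)>0$ and $\|a_nx\|\geq\eps$ for all $n\in S$, so $\{n:\|a_nx\|\geq\eps\}\supseteq S$ has nonzero upper $f^g$-density and $x\notin t^{f,g}_{(a_n)}(\T)$. The only delicate point is the density-shift estimate; everything else is a direct computation with the canonical representation.
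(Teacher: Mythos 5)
Your proof is correct and follows essentially the same route as the paper's: both pass to the left shift of $supp(x)$ and trap $\{a_nx\}$ in a compact subinterval of $(0,1)$, the only cosmetic difference being that you split off the first term $c_{n+1}/q_{n+1}$ and bound the tail by $1/q_{n+1}\leq\tfrac12$ (giving $\eps=\min(m_1,\tfrac12-m_2)$), whereas the paper bounds every term by $m_2/a_{i-1}$ to get $\{a_nx\}\in[m_1,2m_2]$. Your explicit verification that $\overline{d}^f_g$ survives the shift, via $f(|S\cap[1,n]|)\geq f(|A\cap[1,n]|)-f(1)$ and $f(g(n))\to\infty$, carefully fills in a step the paper merely asserts.
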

\begin{proof}
 Let $x=\sum\limits_{i\in supp(x)} \frac{c_i}{a_i}$ be the canonical representation of $x$.
 We define, $B=\{(n-1)\in\N : n\in supp(x) \}$. Since  $\overline{d}^f_g(supp(x))>0$, we must have $\overline{d}^f_g(B)>0$.

 Now $\ \forall n\in B$ one has
 \begin{eqnarray*}
 \{a_nx\}=a_n\cdot\sum\limits_{\stackrel{i\in supp(x)}{i>n}}\frac{c_i}{a_i}\ &\geq& \ a_n\cdot\sum\limits_{\stackrel{i\in supp(x)}{i>n}}\frac{m_1\cdot\frac{a_i}{a_{i-1}}}{a_i}\ = \ a_n\cdot\sum\limits_{\stackrel{i\in supp(x)}{i>n}}\frac{m_1}{a_{i-1}}\\
 &=& \ a_n\cdot\sum\limits_{\stackrel{i\in B}{i\geq n}}\frac{m_1}{a_i} \ \geq \ a_n\cdot \frac{m_1}{a_n} \ = \ m_1
 \end{eqnarray*}
 and
 \begin{eqnarray*}
 \{a_nx\}=a_n\cdot\sum\limits_{\stackrel{i\in supp(x)}{i>n}}\frac{c_i}{a_i}\ &\leq& \ a_n\cdot\sum\limits_{\stackrel{i\in supp(x)}{i>n}}\frac{m_2\cdot\frac{a_i}{a_{i-1}}}{a_i}\ = \ a_n\cdot\sum\limits_{\stackrel{i\in B}{i\geq n}}\frac{m_2}{a_i}\\
 &\leq& m_2(1+\frac{a_n}{a_{n+1}}+\frac{a_n}{a_{n+2}}+\ldots )\ \leq m_2\cdot \frac{1}{(1-\frac{1}{2})}\ =\ 2{m_2}.
 \end{eqnarray*}
 Therefore $\forall n\in B, \ ~ \ \{a_nx\} \in [m_1,2{m_2}] \ ~ \ \& \ B\not\in\iZ_g(f)$ which implies $\|a_nx\|$ cannot $f^g$-statistically converge to $0$. Thus $x\not\in t^{f,g}_{(a_n)}(\T)$.
\end{proof}

 \begin{corollary}\label{Last:Corollary}
Let $(a_n)$ be an arithmetic sequence. Then for a subset $B \subseteq \N$ there exists $x\in \T$ with $supp_{(a_n)}(x) \subseteq B$ and $x\not \in t^{f,g}_{(a_n)}(\T)$ if and only if $\overline{d}^f_g(B)>0$.
\end{corollary}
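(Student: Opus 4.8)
The plan is to prove the two implications separately: the ``only if'' direction is immediate from Theorem \ref{propLaaaast}, while the ``if'' direction requires an explicit construction that I would split into two cases according to whether the relevant indices carry ratio $q_n=2$ or $q_n\ge 3$.

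For the ``only if'' direction I would argue by contraposition. Suppose $\overline{d}^f_g(B)=0$, so $d^f_g(B)=0$. Since $f$ is non-decreasing, $\overline{d}^f_g$ is monotone (from $A\subseteq B$ we get $|A\cap[1,n]|\le |B\cap[1,n]|$), hence any $x$ with $supp(x)\subseteq B$ satisfies $\overline{d}^f_g(supp(x))\le\overline{d}^f_g(B)=0$, i.e. $d^f_g(supp(x))=0$. Theorem \ref{propLaaaast} then forces $x\in t^{f,g}_{(a_n)}(\T)$, so no witnessing $x$ can exist. This disposes of one direction cheaply.

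The substance is the reverse implication. Assuming $\overline{d}^f_g(B)>0$, I would first record that $\overline{d}^f_g$ is subadditive: from $|(A\cup C)\cap[1,n]|\le|A\cap[1,n]|+|C\cap[1,n]|$ together with the triangle inequality for $f$ one gets $\overline{d}^f_g(A\cup C)\le\overline{d}^f_g(A)+\overline{d}^f_g(C)$. Writing $B=B_{\ge3}\cup B_{=2}$ with $B_{\ge3}=\{n\in B:q_n\ge3\}$ and $B_{=2}=\{n\in B:q_n=2\}$, subadditivity guarantees that at least one piece has positive upper $f^g$-density. If $\overline{d}^f_g(B_{\ge3})>0$, I would take $supp(x)=B_{\ge3}$ with $c_n=\lfloor q_n/3\rfloor$; then $c_n/q_n\in[\tfrac15,\tfrac13]\subset(0,\tfrac12)$ uniformly in $n$, so Proposition \ref{sufnot1} applies directly and yields $x\notin t^{f,g}_{(a_n)}(\T)$.

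The genuinely delicate case, and the one I expect to be the main obstacle, is $\overline{d}^f_g(B_{=2})>0$: there the only admissible value is $c_n=1$, forcing $c_n/q_n=\tfrac12$, so Proposition \ref{sufnot1} is inapplicable, and Proposition \ref{lemmanew01} is unavailable because $(a_n)$ need not be globally $q$-bounded. To circumvent this I would thin $B_{=2}=\{b_1<b_2<\cdots\}$ to the isolated set $S=\{b_1,b_3,b_5,\dots\}$, whose consecutive elements differ by at least $2$. Since $|S\cap[1,n]|\ge\tfrac12|B_{=2}\cap[1,n]|$ and $f(2t)\le 2f(t)$ for a modulus function, one gets $f(|S\cap[1,n]|)\ge\tfrac12 f(|B_{=2}\cap[1,n]|)$, whence $\overline{d}^f_g(S)\ge\tfrac12\overline{d}^f_g(B_{=2})>0$; the sparseness is precisely what prevents the fractional parts from drifting up to $1$ (the very phenomenon that lets such $x$ lie \emph{inside} the group when the support is an interval). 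Setting $supp(x)=S$ with all $c_n=1$, a direct estimate at $n=s_k-1$ gives a leading term $a_{s_k-1}/a_{s_k}=1/q_{s_k}=\tfrac12$ and a tail bounded, via $s_{k+1}-1\ge s_k+1$ and $q_{s_k}=2$, by $a_{s_k-1}/a_{s_k+1}=1/(q_{s_k}q_{s_k+1})\le\tfrac14$, so that $\{a_{s_k-1}x\}\in[\tfrac12,\tfrac34]$ and hence $\|a_{s_k-1}x\|\ge\tfrac14$ for all $k$. Finally, because shifting preserves upper $f^g$-density (the error $f(1)/f(g(n))\to0$ as $f(g(n))\to\infty$), the set $S-1$ retains positive upper density, so $\{n:\|a_nx\|\ge\tfrac14\}\notin\iZ_g(f)$, giving $x\notin t^{f,g}_{(a_n)}(\T)$ and completing the construction. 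The hard part is thus entirely the $q_n=2$ analysis: extracting a positive-density sparse subset and controlling the tail so the fractional parts stay bounded away from both $0$ and $1$.
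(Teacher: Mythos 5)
Your proof is correct, and it is actually more complete than the paper, which states Corollary \ref{Last:Corollary} without proof, presenting it as the inversion of Theorem \ref{propLaaaast} obtained via Proposition \ref{sufnot1}. Your ``only if'' direction (monotonicity of $\overline{d}^f_g$ under inclusion plus Theorem \ref{propLaaaast}) is exactly the intended argument. For the ``if'' direction, the paper's implicit route --- support $x$ on $B$ and choose digits so that Proposition \ref{sufnot1} applies --- runs into precisely the obstruction you name: at indices with $q_n=2$ the only admissible digit is $c_n=1$, forcing $c_n/q_n=\tfrac12$, so no $m_2<\tfrac12$ exists, and the positive upper density of $B$ may be carried entirely by such indices (e.g.\ $a_n=2^n$, where every $q_n=2$). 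Your repair is sound: the subadditivity splitting $B=B_{\ge 3}\cup B_{=2}$, the choice $c_n=\lfloor q_n/3\rfloor$ on $B_{\ge 3}$, the thinning of $B_{=2}$ to an isolated set $S$ with $\overline{d}^f_g(S)\ge\tfrac12\overline{d}^f_g(B_{=2})$ via $f(2t)\le 2f(t)$, the telescoping tail bound $a_{s_k-1}/a_{s_k+1}=1/(q_{s_k}q_{s_k+1})\le\tfrac14$ giving $\{a_{s_k-1}x\}\in[\tfrac12,\tfrac34]$, and the shift-invariance of upper density all check out. One simplification worth noting: the case split is avoidable --- thin all of $B$ to a sparse $S$ and set $c_n=\lfloor q_n/2\rfloor$ for $n\in S$; then the leading term $c_{s_k}/q_{s_k}\in[\tfrac13,\tfrac12]$ together with the same tail bound yields $\{a_{s_k-1}x\}\in[\tfrac13,\tfrac34]$, hence $\|a_{s_k-1}x\|\ge\tfrac14$ on the positive-upper-density set $S-1$, covering both cases at once and bypassing Proposition \ref{sufnot1} entirely.
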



\section{Non-triviality of $f^g$-characterized subgroups and some comparison results}

In this section, our main aim is to check whether this newly obtained $f^g$-characterized subgroups are really new compared to the already studied $s$-characterized subgroups and $\alpha$-characterized subgroups.

\begin{theorem}\label{gnotequal}
For any unbounded modulus function $f$, there exists $g\in\G$ such that $t^{f,g}_{(a_n)}(\T)\subsetneq t^{\alpha}_{(a_n)}(\T)$ and $t^{f,g}_{(a_n)}(\T)\subsetneq t^{s}_{(a_n)}(\T)$.
\end{theorem}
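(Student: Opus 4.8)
The plan is to derive both strict inclusions from a single inclusion of the underlying ideals, choosing $g$ as simply as possible. Recall that $x\in t^{f,g}_{(a_n)}(\T)$ holds exactly when $\{n:\|a_nx\|\geq\eps\}\in\iZ_g(f)$ for every $\eps>0$, and that $t^{\alpha}_{(a_n)}(\T)$ and $t^{s}_{(a_n)}(\T)$ are governed in the same fashion by the weight-$g$ density ideals $\iZ_{g_\alpha}$ and $\iZ_{g_1}$ attached to $g_\alpha(n)=n^\alpha$ and $g_1(n)=n$ (both taken with the identity modulus $f_0(x)=x$, which is an admissible unbounded modulus function). Hence any inclusion $\iZ_g(f)\subseteq\iZ_{g_\alpha}$ forces $t^{f,g}_{(a_n)}(\T)\subseteq t^{\alpha}_{(a_n)}(\T)$, and since $n^\alpha\leq n$ yields $\overline{d}_{g_\alpha}(A)\geq\overline{d}_{g_1}(A)$ and thus $\iZ_{g_\alpha}\subseteq\iZ_{g_1}$, one also gets $t^{\alpha}_{(a_n)}(\T)\subseteq t^{s}_{(a_n)}(\T)$. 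I would therefore take $g(n)=n^{\alpha/2}$, which plainly belongs to $\G$: it is non-decreasing, tends to $\infty$, and $n/g(n)=n^{1-\alpha/2}\nrightarrow0$.

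The inclusion $\iZ_g(f)\subseteq\iZ_{g_\alpha}$ I would verify by hand, and this is the one genuinely delicate point. One cannot appeal to Lemma \ref{result4} with $f_1=f$, $f_2=f_0$, because that lemma requires $f(x)/x\geq c_1>0$ for all $x\neq0$, which already fails for sublinear moduli such as $f(x)=\log(1+x)$. Instead, let $A\in\iZ_g(f)$ and put $s_n=|A\cap[1,n]|$. If $A$ were not $\alpha$-density null, then $s_{n_k}\geq\delta n_k^\alpha$ along some subsequence for a fixed $\delta>0$; since $g(n_k)=n_k^{\alpha/2}=o(n_k^\alpha)$, eventually $s_{n_k}\geq g(n_k)$, so monotonicity of $f$ gives $f(s_{n_k})\geq f(g(n_k))$, whence $f(s_{n_k})/f(g(n_k))\geq1\nrightarrow0$, contradicting $A\in\iZ_g(f)$. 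Thus $\iZ_g(f)\subseteq\iZ_{g_\alpha}\subseteq\iZ_{g_1}$, and the two inclusions $t^{f,g}_{(a_n)}(\T)\subseteq t^{\alpha}_{(a_n)}(\T)\subseteq t^{s}_{(a_n)}(\T)$ follow at once.

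It remains to make the inclusions strict, for which it suffices to produce one element of $t^{\alpha}_{(a_n)}(\T)\cap t^{s}_{(a_n)}(\T)$ lying outside $t^{f,g}_{(a_n)}(\T)$. I would take the sparse set $A=\{a_1<a_2<\cdots\}$ with $a_j=\min\{n:\lfloor n^{\alpha/2}\rfloor\geq j\}$, so that $|A\cap[1,n]|=\lfloor g(n)\rfloor$. Then $|A\cap[1,n]|/n^\alpha\leq n^{-\alpha/2}\to0$, so $d_\alpha(A)=0$ and hence $d(A)=0$, while splitting $g(n)$ into its integer and fractional parts and using subadditivity gives $f(|A\cap[1,n]|)/f(g(n))\geq1-f(1)/f(g(n))\to1$, so that $\overline{d}^f_g(A)=1>0$ and $A\notin\iZ_g(f)$. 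Because $\overline{d}^f_g(A)>0$, Corollary \ref{Last:Corollary} provides $x\in\T$ with $supp_{(a_n)}(x)\subseteq A$ and $x\notin t^{f,g}_{(a_n)}(\T)$; since $supp_{(a_n)}(x)\subseteq A$ still has vanishing $\alpha$-density and natural density, Theorem \ref{propLaaaast} (applied with the identity modulus and the weights $n^\alpha$ and $n$) gives $x\in t^{\alpha}_{(a_n)}(\T)\cap t^{s}_{(a_n)}(\T)$. This single $x$ witnesses both strict inclusions. The main obstacle, as noted, is precisely the direct verification of $\iZ_g(f)\subseteq\iZ_{g_\alpha}$ for an arbitrary, possibly sublinear, modulus $f$, where the available comparison lemma does not apply and only the monotonicity and subadditivity of $f$ are at one's disposal.
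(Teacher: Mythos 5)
Your proof is correct, and it differs from the paper's in how the key ideal inclusion is obtained. The paper fixes $g(n)=\log(1+n)$ (a single $g$ that happens to work for every $\alpha$ at once) and gets $\iZ_g(f)\subseteq\iZ_\alpha$ indirectly: it cites Proposition 2.6 of \cite{BDK} for $\iZ_g(f)\subseteq\iZ_g$ and then applies Proposition \ref{result5} with the identity modulus to get $\iZ_g\subsetneq\iZ_\alpha$ (and, taking $\alpha=1$, $\iZ_g\subsetneq\iZ$); strictness comes from an abstractly chosen $A\in\iZ_\alpha\setminus\iZ_g$. You instead take $g(n)=n^{\alpha/2}$ and prove $\iZ_g(f)\subseteq\iZ_\alpha$ by hand via the subsequence argument ($s_{n_k}\geq\delta n_k^\alpha$ eventually dominates $g(n_k)$, so monotonicity of $f$ forces the ratio $\geq 1$), correctly observing that Lemma \ref{result4} with $f_2=\mathrm{id}$ is unavailable for sublinear moduli --- this is exactly the gap the paper plugs by citation, so your direct argument is a genuine (and self-contained) substitute. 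You also build an explicit witness set with counting function $\lfloor n^{\alpha/2}\rfloor$ and verify $d^f_g(A)=1$ by subadditivity, where the paper merely invokes nonemptiness of $\iZ_\alpha\setminus\iZ_g$; a pleasant byproduct is that your single $x$ witnesses both strict inclusions simultaneously, while the paper handles $t^{s}_{(a_n)}(\T)$ by rerunning the argument with $\alpha=1$. The endgame is identical in both proofs: Corollary \ref{Last:Corollary} supplies $x\notin t^{f,g}_{(a_n)}(\T)$ with $supp(x)\subseteq A$, and Theorem \ref{propLaaaast} (specialized to the identity modulus with weights $n^\alpha$ and $n$) places $x$ in $t^{\alpha}_{(a_n)}(\T)$ and $t^{s}_{(a_n)}(\T)$. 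One small caveat: your $g$ depends on $\alpha$, whereas the paper's $\log(1+n)$ does not; under the natural reading of the statement (for each $f$ and each fixed $\alpha$, some $g$ exists) this is immaterial, and in any case your inclusion argument runs verbatim with $g(n)=\log(1+n)$, since then $g(n)=o(n^\alpha)$ for every $\alpha\in(0,1]$.
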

\begin{proof}
 Write the identity function as $f_1$ for brevity, take $g(n)=\log(1+n)$ for all $n\in\N$ and $g_1(n)=n^{\alpha}$ for all $n\in\N$, where $0<\alpha< 1$.  Observe that
$$
 \lim\limits_{n\to\infty}\frac{f_1(g_1(n))}{f_1(g(n))}=\lim\limits_{n\to\infty}\frac{n^\alpha}{\ln(1+n)}=\lim\limits_{n\to\infty}\alpha n^\alpha \cdot \frac{n+1}{n}=\infty
$$
and
$$
\lim\limits_{n\to\infty}\frac{f_1(n)}{f_1(g_1(n))}=\lim\limits_{n\to\infty}\frac{n}{n^\alpha}\to\infty.
$$
  Therefore from Proposition \ref{result5}, it follows that $\iZ_g\subsetneq \iZ_\alpha$. Now, in view of (Proposition 2.6 \cite{BDK}), for any unbounded modulus function $f$ we have $\iZ_g(f)\subseteq\iZ_g$. Consequently one can choose $A\in\iZ_\alpha\setminus  \iZ_g(f)$. Clearly this means $\overline{d}_g^f(A)>0$ and from Corollary \ref {Last:Corollary} it follows that $x\in\T$ with $supp(x)\subseteq A$ satisfies $x\not\in t^{f,g}_{(a_n)}(\T)$. As $supp(x)\subseteq A\in\iZ_{\alpha}$ i.e. $d_\alpha(supp(x))=0$, from Theorem \ref {propLaaaast} we can conclude that $x\in t^{\alpha}_{(a_n)}(\T)$. Thus  $t^{f,g}_{(a_n)}(\T)\subsetneq t^{\alpha}_{(a_n)}(\T)$. Similarly taking $\alpha=1$, and choosing an appropriate support we can show that  $t^{f,g}_{(a_n)}(\T)\subsetneq t^{s}_{(a_n)}(\T)$.
\end{proof}

\begin{theorem}\label{fnotequal}
There exists an unbounded modulus function $f$ such that for any $g\in\G$, $t^{f,g}_{(a_n)}(\T)\neq t^{\alpha}_{(a_n)}(\T)$ and $t^{f,g}_{(a_n)}(\T)\neq t^{s}_{(a_n)}(\T)$.
\end{theorem}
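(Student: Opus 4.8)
The plan is to exhibit a single, slowly growing modulus function $f$ for which, given \emph{any} $g\in\G$, one can produce an explicit set witnessing $\iZ_g(f)\ne\iZ_\alpha$, and then upgrade this purely set-theoretic difference to a difference of the associated subgroups via Corollary \ref{Last:Corollary} and Theorem \ref{propLaaaast}. The contrast with Theorem \ref{gnotequal} is that there $g$ could be tailored to $f$, whereas here $f$ must be fixed once and defeat every $g$ simultaneously. The uniformity needed for this is supplied precisely by the defining restriction of $\G$, namely $n/g(n)\nrightarrow 0$: for each $g\in\G$ there exist $\eps_0>0$ and an infinite set $S\subseteq\N$ with $g(n)\le n/\eps_0$ for all $n\in S$. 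This is the one feature shared by all members of $\G$, and the whole argument is built around it.

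I would take $f(x)=\log(1+x)$ (an unbounded modulus function). Fix $g\in\G$ together with the associated $\eps_0$ and infinite set $S$. For a parameter $\alpha\in(0,1]$ consider the polynomially sparse set $A=\{\lfloor k^{2/\alpha}\rfloor:k\in\N\}$, so that $c\,n^{\alpha/2}\le|A\cap[1,n]|\le C\,n^{\alpha/2}$ for suitable constants and all large $n$. On the one hand $d_\alpha(A)=\lim_n|A\cap[1,n]|/n^{\alpha}=0$, so $A\in\iZ_\alpha$; on the other hand, restricting to $n\in S$,
\[
\frac{f(|A\cap[1,n]|)}{f(g(n))}=\frac{\log(1+|A\cap[1,n]|)}{\log(1+g(n))}\ \ge\ \frac{\log(1+c\,n^{\alpha/2})}{\log(1+n/\eps_0)}\ \longrightarrow\ \frac{\alpha}{2}>0 ,
\]
whence $\overline{d}^f_g(A)>0$, i.e. $A\notin\iZ_g(f)$. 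The decisive point is that the logarithm turns the polynomial gap between $n^{\alpha/2}$ and $n$ into a positive constant ratio; this is exactly why $f$ must be chosen slowly growing, since a power modulus $f(x)=x^\gamma$ would drive this ratio to $0$ and collapse the construction.

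It then remains to pass from sets to group elements. Since $\overline{d}^f_g(A)>0$, Corollary \ref{Last:Corollary} produces $x\in\T$ with $supp(x)\subseteq A$ and $x\notin t^{f,g}_{(a_n)}(\T)$. Because $supp(x)\subseteq A$ and $\overline{d}_\alpha(A)=0$, we have $d_\alpha(supp(x))=0$, and applying Theorem \ref{propLaaaast} with the identity modulus and weight $n^{\alpha}$ (for which $d^f_g$ specializes to $d_\alpha$) yields $x\in t^\alpha_{(a_n)}(\T)$. Thus $x\in t^\alpha_{(a_n)}(\T)\setminus t^{f,g}_{(a_n)}(\T)$, proving $t^{f,g}_{(a_n)}(\T)\ne t^\alpha_{(a_n)}(\T)$ for every $g\in\G$. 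Taking $\alpha=1$ (so that the weight is $n$ and $d^f_g$ specializes to the natural density $d$ governing $t^s_{(a_n)}(\T)$) gives the same conclusion verbatim, so $t^{f,g}_{(a_n)}(\T)\ne t^s_{(a_n)}(\T)$ as well; note $n^{\alpha/2}=o(n^{\alpha})$ for all $\alpha\in(0,1]$, so the single sparse set handles every case.

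The step I expect to require the most care is verifying that the one choice $f=\log(1+x)$ truly defeats every $g\in\G$ at once. Everything hinges on the interplay between the $\G$-constraint $n/g(n)\nrightarrow 0$, which caps $g$ at a linear rate along an infinite set $S$, and the slow growth of $f$, which keeps the sparse numerator comparable to the denominator along that same $S$. Making the upper-density estimate uniform in $g$ — rather than for a single cleverly chosen $g$ as in Theorem \ref{gnotequal} — is the heart of the matter, and one must be attentive that the estimate uses only the pointwise bound $g(n)\le n/\eps_0$ on $S$ (no monotonicity or further structure of $g$ is invoked), so that no hidden dependence on $g$ creeps into the choice of $f$.
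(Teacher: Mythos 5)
Your proposal is correct and takes essentially the same approach as the paper: both fix $f(x)=\log(1+x)$, choose a sparse set $A$ with $|A\cap[1,n]|\asymp n^{\beta}$ for some $\beta<\alpha$ (you fix $\beta=\alpha/2$), use the defining condition $n/g(n)\nrightarrow 0$ to cap $g$ along an infinite subsequence (your $g(n)\leq n/\eps_0$ on $S$ versus the paper's $1+g(n_k)<(1+n_k)^2$) so that the logarithmic ratio stays bounded away from $0$, giving $A\in\iZ_\alpha\setminus\iZ_g(f)$, and then pass to subgroups via Corollary \ref{Last:Corollary} and Theorem \ref{propLaaaast}. The only cosmetic differences are that the paper lets $\beta$ range over $(0,\alpha)$ to additionally conclude $|t^{\alpha}_{(a_n)}(\T)\setminus t^{f,g}_{(a_n)}(\T)|=\mathfrak c$, and handles $t^{s}_{(a_n)}(\T)$ via the inclusion $t^{\alpha}_{(a_n)}(\T)\subseteq t^{s}_{(a_n)}(\T)$ rather than by rerunning the argument with $\alpha=1$.
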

\begin{proof}
We consider $f(x)=\log (1+x)$ and any $g\in\G$. Let $A\subset\N$ be such that $|A\cap[1,n]|=\lfloor n^{\beta}\rfloor$ (where $0<\beta<\alpha<1$). \\ Then
$$
d_\alpha(A)=\lim_{n\to\infty}\frac{|(A\cap [1,n])|}{n^{\alpha}} \leq \lim_{n\to\infty}\frac{n^{\beta}}{n^{\alpha}} = \lim_{n\to\infty}\frac{1}{n^{\alpha-\beta}}=0.
$$
Since $\frac{n}{g(n)}\nrightarrow 0$, there exists $(n_k)\subseteq\N$ such that $1+g(n_k)< (1+n_k)^2$. Now, we observe that
$$
\frac{f(|(A\cap[1,n_k])|)}{f(g(n_k))} = \frac{\log(1+|(A\cap[1,n_k])|)}{\log(1+g(n_k))} \geq \frac{\log(n_k^{\beta})}{\log (1+n_k)^2} \to \frac{\beta}{2}>0 .
$$
Therefore $\overline{d}_g^f(A)>0$ and again in view of Corollary \ref {Last:Corollary}, $x\in\T$ with $supp(x)\subseteq A$ lies outside $t^{f,g}_{(a_n)}(\T)$. On the other hand $supp(x)\subseteq A$ and $d_\alpha(A)=0$ implies $x\in t^{\alpha}_{(a_n)}(\T)\subseteq t^{s}_{(a_n)}(\T)$ in view of  Theorem \ref {propLaaaast}. Hence  $t^{f,g}_{(a_n)}(\T)\neq t^{\alpha}_{(a_n)}(\T)$ and $t^{f,g}_{(a_n)}(\T)\neq t^{s}_{(a_n)}(\T)$.

But we can actually say more. For each $0<\alpha\leq1$, we can choose $\beta$ from $(0,\alpha)$ in $\mathfrak c$ many ways. Therefore, we also have $|t^{\alpha}_{(a_n)}(\T)\setminus t^{f,g}_{(a_n)}(\T)|=\mathfrak c$.
\end{proof}

\begin{theorem}\label{cnsubsetg}
For any unbounded modulus function $f$, there exists $\mathfrak c$ many $g\in\G$ such that $t^{f}_{(a_n)}(\T) \subsetneq t^{f,g}_{(a_n)}(\T)$.
\end{theorem}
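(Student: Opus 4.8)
The plan is to reduce the claimed strict inclusion of subgroups to the strict inclusion of ideals $\iZ(f)\subsetneq\iZ_g(f)$, which we already control for $\mathfrak c$ many weights $g$. Observe first that $t^f_{(a_n)}(\T)$ is precisely $t^{f,g_0}_{(a_n)}(\T)$ for the weight $g_0(n)=n$ (which lies in $\G$, since $n/g_0(n)=1\nrightarrow 0$). Thus $x\in t^f_{(a_n)}(\T)$ iff $\{n:\|a_nx\|\geq\eps\}\in\iZ(f)$ for every $\eps>0$, whereas $x\in t^{f,g}_{(a_n)}(\T)$ iff the very same level sets lie in $\iZ_g(f)$. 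Consequently, whenever $\iZ(f)\subseteq\iZ_g(f)$ we obtain the inclusion $t^f_{(a_n)}(\T)\subseteq t^{f,g}_{(a_n)}(\T)$ immediately from the definitions.

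By Remark \ref{remarkc} there are $\mathfrak c$ many $g\in\G$ for which $\iZ(f)\subsetneq\iZ_g(f)$; fix any such $g$. It remains to produce a witness $x\in t^{f,g}_{(a_n)}(\T)\setminus t^f_{(a_n)}(\T)$. Choose $A\in\iZ_g(f)\setminus\iZ(f)$, so that $d^f_g(A)=0$ while $\overline{d}^f(A)>0$. Applying Corollary \ref{Last:Corollary} with the weight $g_0(n)=n$ to the set $A$ (whose upper $f$-density is positive) furnishes an $x\in\T$ with $supp(x)\subseteq A$ and $x\notin t^f_{(a_n)}(\T)$.

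Finally I would check that this same $x$ lands in $t^{f,g}_{(a_n)}(\T)$. Since $supp(x)\subseteq A$ and $\iZ_g(f)$ is an ideal, hence downward closed, we have $supp(x)\in\iZ_g(f)$, i.e. $d^f_g(supp(x))=0$. Theorem \ref{propLaaaast} then yields $x\in t^{f,g}_{(a_n)}(\T)$, so the inclusion $t^f_{(a_n)}(\T)\subsetneq t^{f,g}_{(a_n)}(\T)$ is strict for this $g$. As the argument applies verbatim to each of the $\mathfrak c$ many weights supplied by Remark \ref{remarkc}, the theorem follows.

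The step requiring the most care is the simultaneous membership and non-membership of the single witness $x$: the non-membership $x\notin t^f_{(a_n)}(\T)$ must be extracted from the existence (``if'') direction of Corollary \ref{Last:Corollary} applied to a set of positive upper $f$-density, while the membership $x\in t^{f,g}_{(a_n)}(\T)$ rests on passing from $A\in\iZ_g(f)$ to $supp(x)\in\iZ_g(f)$ via downward closure and then invoking Theorem \ref{propLaaaast}. Everything else is bookkeeping on the ideal inclusions already established in Section 2 (in particular Proposition \ref{pnsubset} and Remark \ref{remarkc}).
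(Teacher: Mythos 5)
Your proposal is correct and follows essentially the same route as the paper's proof: Remark \ref{remarkc} supplies $\mathfrak c$ many $g$ with $\iZ(f)\subsetneq\iZ_g(f)$, a set $A\in\iZ_g(f)\setminus\iZ(f)$ yields a witness $x$ with $supp(x)\subseteq A$ via Corollary \ref{Last:Corollary} (non-membership in $t^{f}_{(a_n)}(\T)$) and Theorem \ref{propLaaaast} (membership in $t^{f,g}_{(a_n)}(\T)$). You merely make explicit two steps the paper leaves implicit --- the identification $t^{f}_{(a_n)}(\T)=t^{f,g_0}_{(a_n)}(\T)$ for $g_0(n)=n$ giving the inclusion, and the downward closure passing from $A\in\iZ_g(f)$ to $supp(x)\in\iZ_g(f)$ --- which is careful bookkeeping, not a different argument.
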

\begin{proof}
Let $f$ be an unbounded modulus function. From Remark \ref{remarkc}, there exists $\mathfrak c$ many $g\in\G$ such that $\iZ(f) \subsetneq \iZ_g(f)$. Therefore, there exists $A\subseteq\N$ such that $A\in\iZ_g(f)\setminus\iZ(f)$  i.e. $d_g^{f}(A)=0$ while $\overline{d}^{f}(A)>0$. The result then follows considering $x\in\T$ with $supp(x)\subseteq A$ from Corollary \ref {Last:Corollary} and Theorem \ref {propLaaaast}.
\end{proof}

Finally we have the following observations about the relations between characterized subgroups generated by two modulus functions which provides a broad picture about these subgroups.

$\bullet$ For any two unbounded modulus functions $f_1,f_2$, there exists a family $\G_0\subseteq \G$ of cardinality $\mathfrak c$ such that $t^{f_i,g}_{(a_n)}(\T)$ is incomparable with $t^{f_j}_{(a_n)}(\T)$ for each $g\in\G_0$ and $i,j\in\{1,2\}$. Also $t^{f_i,g_1}_{(a_n)}(\T)$, $t^{f_j,g_2}_{(a_n)}(\T)$ are incomparable for $i,j\in\{1,2\}$ and any two distinct $g_1,g_2\in\G_0$.

  The result follows from Theorem \ref{tncomparableg} following the line of the proof of Theorem \ref{cnsubsetg}.

$\bullet$ For any unbounded modulus function $f$, there exist $g_1,g_2\in\G$ such that $t^{f,g_1}_{(a_n)}(\T)\subsetneq t^{f}_{(a_n)}(\T) \subsetneq t^{f,g_2}_{(a_n)}(\T)$.

 The result follows from Proposition \ref{ppnsubset} with the proof being analogous to the proof of Theorem \ref{cnsubsetg}.

In the recent article \cite{BDH} the following open problem was posed: Problem 2.1. For any arithmetic sequence $(a_n)$ and $0 < \alpha_1 < \alpha_2 <1$, is $t^{\alpha_1} _{(a_n)}(\T) \subsetneq t^{\alpha_2} _{(a_n)}(\T)$ ?

We end the section with the following result which shows that the answer to the above problem is positive.
\begin{proposition}
For any unbounded modulus function $f$, if $g_{1},g_{2}\in \G$ are such that $\frac{f(n)}{f(g_2(n))} \geq a>0$ and
$\frac{f(g_{2}(n))}{f(g_{1}(n))} \to \infty$, then $t^{f,g_1}_{(a_n)}(\T) \subsetneq
t^{f,g_2}_{(a_n)}(\T)$.
\end{proposition}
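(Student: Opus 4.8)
The plan is to reduce everything to the ideal-level comparison already established in Proposition \ref{result5}. The hypotheses $\frac{f(n)}{f(g_2(n))} \geq a > 0$ and $\frac{f(g_2(n))}{f(g_1(n))} \to \infty$ are precisely those of that proposition, so I would first invoke it to obtain the strict ideal inclusion $\iZ_{g_1}(f) \subsetneq \iZ_{g_2}(f)$. The whole argument then rests on transporting this inclusion from the ideals to the characterized subgroups, and I would carry the standing assumption (implicit throughout this section) that $(a_n)$ is an arithmetic sequence, since the witnessing machinery below is available only in that case.

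For the inclusion $t^{f,g_1}_{(a_n)}(\T) \subseteq t^{f,g_2}_{(a_n)}(\T)$, I would unwind Definition \ref{Def1}: by definition $x \in t^{f,g}_{(a_n)}(\T)$ exactly when $\{n \in \N : \|a_n x\| \geq \eps\} \in \iZ_g(f)$ for every $\eps > 0$. Thus if $x \in t^{f,g_1}_{(a_n)}(\T)$, each such set lies in $\iZ_{g_1}(f) \subseteq \iZ_{g_2}(f)$, whence $x \in t^{f,g_2}_{(a_n)}(\T)$. This step in fact needs no arithmetic hypothesis on $(a_n)$; it is purely a consequence of the ideal inclusion.

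The strictness is where the two earlier characterization results come into play, and the key point is to produce a single element that lies in the larger subgroup but not in the smaller one. Since $\iZ_{g_1}(f) \subsetneq \iZ_{g_2}(f)$, I would fix a set $A \in \iZ_{g_2}(f) \setminus \iZ_{g_1}(f)$; by definition this means $d^f_{g_2}(A) = 0$ while $\overline{d}^f_{g_1}(A) > 0$. Applying Corollary \ref{Last:Corollary} with the weight $g_1$ and $B = A$, the condition $\overline{d}^f_{g_1}(A) > 0$ yields an $x \in \T$ with $supp(x) \subseteq A$ and $x \notin t^{f,g_1}_{(a_n)}(\T)$. For this same $x$, monotonicity of the upper density gives $\overline{d}^f_{g_2}(supp(x)) \leq \overline{d}^f_{g_2}(A) = 0$, so $d^f_{g_2}(supp(x)) = 0$, and Theorem \ref{propLaaaast} then places $x \in t^{f,g_2}_{(a_n)}(\T)$. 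Hence $x$ witnesses $t^{f,g_1}_{(a_n)}(\T) \subsetneq t^{f,g_2}_{(a_n)}(\T)$.

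I do not expect a genuine obstacle here: the content is already packaged in Proposition \ref{result5}, Corollary \ref{Last:Corollary}, and Theorem \ref{propLaaaast}. The only point requiring care is coordinating the two directions on one element---ensuring that the witness delivered by Corollary \ref{Last:Corollary} for failure in $t^{f,g_1}_{(a_n)}(\T)$ simultaneously has $g_2$-density-zero support so that Theorem \ref{propLaaaast} applies---and this is guaranteed precisely by choosing $A$ in the set-difference $\iZ_{g_2}(f) \setminus \iZ_{g_1}(f)$.
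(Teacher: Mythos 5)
Your proposal is correct and follows essentially the same route as the paper: the paper's proof likewise invokes Proposition~2.4 to get $\iZ_{g_1}(f) \subsetneq \iZ_{g_2}(f)$ and then refers to the method of Theorem~4.3, which is exactly the argument you spell out --- choose $A \in \iZ_{g_2}(f)\setminus\iZ_{g_1}(f)$, get a witness $x$ with $supp(x)\subseteq A$ outside $t^{f,g_1}_{(a_n)}(\T)$ via Corollary~3.20, and place it in $t^{f,g_2}_{(a_n)}(\T)$ via Theorem~3.11. Your explicit verification of the inclusion from the ideal containment, and your remark that $(a_n)$ must be arithmetic for the witnessing machinery, merely make precise what the paper leaves implicit.
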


\begin{proof}
As from Proposition \ref{result5}, it follows that $\iZ_{g_{1}}(f) \subsetneq
\iZ_{g_{2}}(f)$, the rest of the proof can be done following the method of the proof of Theorem \ref{cnsubsetg}.
\end{proof}

\subsection*{Acknowledgement:}
 The authors are thankful to the referee for several valuable suggestions which have improved the presentation of the paper. The second author acknowledges the financial support  from CSIR, HRDG, Govt. of India as a Ph.D student with sanction letter number 09/096(0961)/2018-EMR-I.

\end{document}